\documentclass{article}

\usepackage[preprint]{neurips_2026}

\usepackage[utf8]{inputenc} 
\usepackage[T1]{fontenc}    
\usepackage{hyperref}       
\usepackage{url}            
\usepackage{booktabs}       
\usepackage{amsfonts}       
\usepackage{nicefrac}       
\usepackage{microtype}      
\usepackage{xcolor}         
\usepackage{algorithm}
\usepackage[noend]{algorithmic}
\usepackage{microtype}
\usepackage{graphicx}
\usepackage{subfigure} 
\usepackage{booktabs} 
\usepackage{tikz}
\usetikzlibrary{arrows.meta}
\usepackage{hyperref}

\usepackage{pgfplots}
\pgfplotsset{compat=1.17}

\usepackage{amsmath}
\usepackage{amssymb}
\usepackage{mathtools}
\usepackage{amsthm}
\usepackage{enumitem}
\usepackage[capitalize,noabbrev]{cleveref}

\theoremstyle{plain}
\newtheorem{theorem}{Theorem}[section]

\newtheorem{lemma}[theorem]{Lemma}
\newtheorem{corollary}[theorem]{Corollary}
\theoremstyle{definition}
\newtheorem{definition}[theorem]{Definition}
\newtheorem{assumption}[theorem]{Assumption}
\theoremstyle{remark}
\newtheorem{remark}[theorem]{Remark}

\newcommand{\R}{\mathbb{R}}

\DeclareMathOperator{\argmin}{argmin}

\title{RanSOM: Second-Order Momentum with Randomized Scaling for Constrained and Unconstrained Optimization}

\author{%
  El Mahdi Chayti \\
  Machine Learning and Optimization Laboratory (MLO) \\
  EPFL, Switzerland \\
  \texttt{el-mahdi.chayti [AT] epfl.ch} \\
}

\begin{document}
\maketitle

\begin{abstract}
Momentum methods, such as Polyak's Heavy Ball, are the standard for training deep networks but suffer from curvature-induced bias in stochastic settings, limiting convergence to suboptimal $\mathcal{O}(\epsilon^{-4})$ rates. Existing corrections typically require expensive auxiliary sampling or restrictive smoothness assumptions. We propose \textbf{RanSOM}, a unified framework that eliminates this bias by replacing deterministic step sizes with randomized steps drawn from distributions with mean $\eta_t$. This modification allows us to leverage Stein-type identities to compute an exact, unbiased estimate of the momentum bias using a single Hessian-vector product computed jointly with the gradient, avoiding auxiliary queries. We instantiate this framework in two algorithms: \textbf{RanSOM-E} for unconstrained optimization (using exponentially distributed steps) and \textbf{RanSOM-B} for constrained optimization (using beta-distributed steps to strictly preserve feasibility). Theoretical analysis confirms that RanSOM recovers the optimal $\mathcal{O}(\epsilon^{-3})$ convergence rate under standard bounded noise, and achieves optimal rates for heavy-tailed noise settings ($p \in (1, 2]$).
\end{abstract}
\section{Introduction}
\label{sec:intro}

Stochastic optimization is the engine of modern machine learning. We consider the problem of minimizing a smooth, potentially non-convex objective function $f(x) = \mathbb{E}_{\xi}[f_\xi(x)]$ over a domain $\mathcal{C} \subseteq \mathbb{R}^d$. 

Standard momentum methods (e.g., Polyak's Heavy Ball \citep{polyak1964}), which accumulate a running average of gradients $m_t$, are designed to reduce the variance of the stochastic gradient estimator. While effective for convex problems, they suffer from a fundamental flaw in non-convex landscapes: \textbf{Estimation Bias}.

\subsection{The Bias-Variance Bottleneck}
The core issue is that the momentum vector $m_t$ aggregates gradients from \textit{past} locations. As the optimizer moves from $x_{t-1}$ to $x_t$, the gradient direction changes due to the curvature of the loss landscape. Consequently, $m_{t-1}$ becomes a "stale" estimator of the current geometry $\nabla f(x_t)$.

Mathematically, this bias is proportional to the change in gradients along the update step:
\begin{equation}
    \label{eq:bias_def}
    \text{Bias}_t \propto \nabla f(x_t) - \nabla f(x_{t-1}) = \int_{0}^{1} \nabla^2 f(x_{t-1} + \tau(x_t - x_{t-1})) (x_t - x_{t-1}) d\tau.
\end{equation}
Without correcting this curvature-induced bias, momentum methods cannot achieve the optimal $\mathcal{O}(\epsilon^{-3})$ sample complexity required for non-convex optimization \cite{yuan2016convergence} (see Figure \ref{fig:bias_diagram}).

\begin{figure}[h]
    \centering
    \begin{tikzpicture}[scale=2, >=Stealth]
        \draw[thick, gray, ->] (0,1) to[out=-70, in=160] (2,0.5);
        \node at (0.1, 1.1) {$x_{t-1}$};
        \node at (2.1, 0.4) {$x_{t}$};
        
        \draw[->, thick, blue] (2,0.5) -- (2.5, 0.2) node[right] {$\nabla f(x_t)$};
        
        \draw[->, thick, dashed, red] (2,0.5) -- (2.6, 0.8) node[above] {$m_{t-1} \approx \nabla f(x_{t-1})$};
        
        \draw[->, thick, orange] (2.5, 0.2) -- (2.6, 0.8) node[midway, left] {\tiny Bias};
        
        \node[align=center, font=\footnotesize] at (1.5, 1.5) {Curvature causes $m_{t-1}$\\ to misalign with $\nabla f(x_t)$};
    \end{tikzpicture}
    \caption{\textbf{Momentum Bias.} The historic momentum $m_{t-1}$ approximates $\nabla f(x_{t-1})$, but the optimizer has moved to $x_t$. The deviation (orange vector) is the bias induced by the Hessian $\nabla^2 f$. To accelerate convergence, this bias must be corrected.}
    \label{fig:bias_diagram}
\end{figure}

\subsection{The Landscape of Bias Correction}
\label{subsec:bias_landscape}

To restore the optimal convergence properties of momentum, we must correct the "lag" in the estimator. Mathematically, this requires constructing a correction term $\delta_t$ that acts as an unbiased estimator of the gradient change along the update path, effectively approximating the integral $\nabla f(x_t) - \nabla f(x_{t-1}) \approx \int \nabla^2 f(x) dx$. While the goal is clear, constructing an estimator that is both computationally efficient and valid under weak assumptions has proven to be a difficult balancing act. Prior works have attempted to construct this estimator, but each approach incurs a prohibitive trade-off (summarized in Table~\ref{tab:comparison}).

The first class of methods, exemplified by STORM or MVR \citep{cutkosky2019storm}, estimates the path integral using the difference of stochastic gradients at the two endpoints: $\delta_t = \nabla f_{\xi}(x_t) - \nabla f_{\xi}(x_{t-1})$. While this method maintains a standard cost of $2(F_c+B_c)$ (two gradients per step), its variance is tightly coupled to the smoothness of the individual sample loss functions. Consequently, these methods strictly require that every individual sample $f_\xi$ be smooth (later relaxed to average smoothness). This assumption is frequently violated in modern deep learning architectures, such as those employing ReLU activations, causing the variance of the estimator to explode and training to diverge.

To address the limitations of difference-based estimators, Second-Order Momentum (SOM) \citep{tran2022better} approximates the integral directly using a local linearization around the previous iterate: $\delta_t = \nabla^2 f_{\xi}(x_{t})(x_t - x_{t-1})$. Like STORM, this maintains the standard $2(F_c+B_c)$ cost by reusing gradients. However, because this is a deterministic approximation of the integral using a fixed point, the approximation error is bounded only if the Hessian does not change rapidly. Thus, the classic SOM strictly requires \textbf{both} the gradient and the Hessian to be globally Lipschitz continuous (i.e., bounded second and third derivatives). These are strong, global assumptions that are difficult to verify and often do not hold in practice.

More recent variants, often referred to as SOM-Unif, attempt to relax these smoothness requirements by applying the Mean Value Theorem stochastically. This technique was originally introduced for Frank-Wolfe by \citet{zhang2020one}, later adapted to Reinforcement Learning by \citet{salehkaleybar2022momentum} and recently used for general unconstrained optimization \citep{sadiev2025second, khirirat2025better}. This approach samples a uniform midpoint $\hat{x}$ between $x_{t-1}$ and $x_t$ to evaluate the Hessian term $\delta_t = \nabla^2 f_\xi(\hat{x})(x_t - x_{t-1})$. This provides an unbiased estimator requiring only expected smoothness ($C^2$). However, this theoretical robustness comes at a steep computational price. Since the evaluation point $\hat{x}$ is distinct from the gradient query points ($x_t, x_{t-1}$), the method requires a dedicated forward and backward pass just for the Hessian-Vector Product. This raises the cost to $3(F_c+B_c)$ per iteration, effectively negating much of the wall-clock speedup gained from acceleration.

\begin{table}[ht]
\centering
\small
\setlength{\tabcolsep}{4pt}
\caption{\textbf{Comparison of Momentum Correction Strategies.} Existing methods like STORM \cite{cutkosky2019storm} and Classic SOM \cite{tran2022better} either require strong assumptions (smoothness of individual samples $f_\xi$, Lipschitz Hessian) or, like SOM-Unif \cite{zhang2020one, salehkaleybar2022momentum}, incur higher computational costs due to auxiliary queries. RanSOM is the only method that achieves acceleration with minimal assumptions and standard cost. ($\Delta x_t = x_t - x_{t-1}$; $F_c$=Forward pass, $B_c$=Backward pass; Exp.=Expected, Lip.=Lipschitz; Rates shown assume Bounded Variance).}
\label{tab:comparison}
\begin{tabular}{@{}llclc@{}}
\toprule
\textbf{Method} & \textbf{Estimator $\delta_t$} & \textbf{Smoothness Assumption} & \textbf{Cost} & \textbf{Rate} \\
\midrule
\textbf{STORM} & $\nabla f_\xi(x_t) - \nabla f_\xi(x_{t-1})$ & Average & $2(F_c+B_c)$ & $T^{-1/3}$ \\
\textbf{SOM (Classic)} & $\nabla^2 f_\xi(x_{t}) \Delta x_t$ & Exp. + Lip. $\nabla^2 f$ & $2(F_c+B_c)$ & $T^{-1/3}$ \\
\textbf{SOM-Unif} & $\nabla^2 f_\xi(\hat{x}) \Delta x_t$ & Expected & $\mathbf{3(F_c+B_c)}$ & $T^{-1/3}$ \\
\midrule
\textbf{RanSOM (Ours)} & $\nabla^2 f_\xi(x_t) \Delta x_t$ & \textbf{Expected} & $\mathbf{2(F_c+B_c)}$ & $\mathbf{T^{-1/3}}$ \\
\bottomrule
\end{tabular}
\end{table}

\subsection{Our Contribution: RanSOM}
We propose a solution that eliminates both the additional assumptions and the computational overhead of prior corrections. We introduce \textbf{Randomized Second-Order Momentum (RanSOM)}, a framework based on a novel paradigm: \textit{Randomized Step Sizes}.

Instead of a fixed step $\eta_t$, we treat the step size $s_t$ as a random variable:
\begin{equation*}
    x_{t+1} = x_t + s_t d_t, \quad \text{where } \mathbb{E}[s_t] = \eta_t.
\end{equation*}
By randomizing the step (e.g., via Exponential or Beta distributions), we utilize identities analogous to \textbf{Stein's Lemma} to perform "statistical integration" of the Hessian. This yields an exact, unbiased estimator of the curvature bias using a single Hessian-Vector Product (HVP) at the \textit{next} iterate $x_{t+1}$.

Our approach offers three distinct advantages:

\begin{enumerate}
    \item \textbf{Universal Geometric Framework:} We couple our randomized correction with \textit{Linear Minimization Oracle (LMO)} updates. This allows RanSOM to naturally generalize modern "normalized" optimizers:
    \begin{itemize}
        \item With an $L_2$-ball LMO, we recover \textit{Normalized SGD}.
        \item With an $L_\infty$-ball LMO, we recover \textit{SignSGD}.
        \item With spectral constraints, we can extend to \textit{Muon}-style updates.
    \end{itemize}
    
    \item \textbf{Optimal Rate and Generalized Robustness:} We provide a rigorous convergence analysis proving that RanSOM achieves the optimal $\mathcal{O}(\epsilon^{-3})$ sample complexity under standard bounded variance assumptions. Furthermore, we extend our analysis to challenging non-standard settings, demonstrating that RanSOM converges robustly even under \textit{generalized $(L_0, L_1)$-smoothness} and \textit{heavy-tailed gradient noise} ($p \in (1, 2]$), without requiring the Hessian to be Lipschitz continuous.
    \item \textbf{No Auxiliary Query Overhead:} Unlike variance-reduced methods such as SOM-Unif which require sampling auxiliary "look-ahead" points, RanSOM evaluates the Hessian-Vector Product at $x_{t+1}$—the same point required for the subsequent gradient step. While this incurs the computational cost of a second backpropagation pass (similar to STORM), it avoids the additional data loading and forward pass overhead associated with auxiliary queries.
\end{enumerate}

\section{Related Work}
\label{sec:related_work}

\textbf{Bias in Stochastic Momentum.} 
Standard momentum (SGDM) \cite{polyak1964, sutskever2013importance} suffers from a curvature-induced bias in non-convex settings \cite{yuan2016convergence}. This prevents optimal convergence unless the momentum parameter $\beta$ decays rapidly, which often causes training instability.

\textbf{Variance Reduction and Acceleration.} 
Recursive variance reduction achieves the optimal $\mathcal{O}(\epsilon^{-3})$ sample complexity. STORM \cite{cutkosky2019storm} eliminated SPIDER's \cite{fang2018spider} full-gradient requirement using a recursive estimator. While  recent works like \citet{khirirat2025better} showed that STORM converges under ``average smoothness'' rather than strictly smooth individual losses, this remains a strictly stronger assumption than classical expected smoothness. Crucially, while average smoothness bounds the expected squared difference of stochastic gradients (which still severely restricts non-smooth realizations like ReLUs), expected smoothness only requires the macroscopic objective $f(x) = \mathbb{E}[f_\xi(x)]$ to be smooth. This allows RanSOM to naturally handle non-smooth individual losses where prior methods fail.

\textbf{Geometric Optimization \& Second-Order Correction.}
Adapting momentum to non-Euclidean geometries via Linear Minimization Oracles (LMOs) \cite{jordan2024muon, pethick2025training, kovalev2025non} typically yields a slower $\mathcal{O}(\epsilon^{-4})$ rate. Second-Order Momentum (SOM) corrects bias using Hessian information \cite{tran2022better, khirirat2025better} to recover the $\mathcal{O}(\epsilon^{-3})$ rate, but introduces a strict trade-off: it either requires Hessian Lipschitz continuity, or it demands an auxiliary "lookahead" evaluation point (SOM-Unif) that doubles computational costs \cite{zhang2020one, salehkaleybar2022momentum, sadiev2025second, khirirat2025better}. RanSOM resolves this dilemma, integrating sampling directly into the update step via Stein's Identity to achieve the optimal rate without auxiliary points or Lipschitz Hessian assumptions.

\textbf{Robustness to Heavy Tails and Relaxed Smoothness.} 
Standard methods struggle with heavy-tailed gradient noise ($p \in (1, 2]$) and non-uniform smoothness. \citet{hubler2025gradient} showed gradient normalization achieves optimal convergence under heavy tails, while \citet{chen2023generalized} introduced $(L_0, L_1)$-smoothness for modern architectures like Transformers. RanSOM generalizes both: our LMO update inherently normalizes heavy tails, and our integration-based bias correction holds under generalized $(L_0, L_1)$-smoothness.

\textbf{Connections to Stein's Identity and Second-Order Momentum.} 
While \citet{zhang2024random} pioneered the exponential identity to smooth non-smooth objectives, we fundamentally repurpose it to estimate and bound momentum bias in constrained optimization under $(L_0, L_1)$-smoothness. We additionally introduce a novel Beta identity specifically tailored for unconstrained settings. 

Finally, our ``second-order momentum'' integrates Hessian-vector products into the estimator to track update-path curvature without forming full matrices. This is distinct from ``momentum for second-order optimization'' (e.g., stochastic cubic Newton \cite{chayti2024improving}), which solves complex cubic subproblems to attain faster deterministic rates under stronger theoretical assumptions, though this approach achieves a weaker stochastic rate.

\section{Method: The RanSOM Framework}
\label{sec:method}

Standard momentum fails in non-convex settings because the accumulated gradient vector $m_t$ becomes stale as the optimizer traverses curved landscapes. To correct this, we need to estimate the integral of the Hessian along the update path.
The core intuition of RanSOM is that \textit{randomization acts as a probe for curvature}. By treating the step size not as a fixed hyperparameter but as a random variable drawn from a specific distribution, we can exploit integration-by-parts identities (Stein's Lemma) to relate the finite difference of gradients (the bias) to a single point-derivative (the Hessian-Vector Product).

\subsection{Randomized Integration by Parts}
We define the ``gradient along the path'' function $g(s) = \nabla f(x_t + s d_t)$, where $d_t$ is the update direction. The bias we wish to estimate is the expected change in gradients:
$$ \Delta = \mathbb{E}[g(s) - g(0)] = \mathbb{E}[\nabla f(x_{t+1}) - \nabla f(x_t)]. $$
Directly computing this expectation typically requires two evaluations ($x_t$ and $x_{t+1}$). However, using Stein-type identities, we can express this difference using only the derivative $g'(s) = \nabla^2 f(x_t + s d_t) d_t$ evaluated at the \textit{random} endpoint $s$.

\begin{lemma}[Stein-Type Identities for Optimization]
\label{lemma:stein}
Let $g: \mathbb{R} \to \mathbb{R}^d$ be a differentiable function with
\emph{integrable derivative} (i.e. $\int_0^\infty \|g'(z)\|_2\, dz <
\infty$).
\begin{enumerate}
    \item \textbf{Exponential Identity (Unconstrained):} If
    $s \sim \mathrm{Exp}(\lambda)$, then
    \begin{equation}
        \mathbb{E}[g(s) - g(0)] = \frac{1}{\lambda} \mathbb{E}[g'(s)].
    \end{equation}
    This implies that scaling the Hessian-Vector Product at the
    destination $x_{t+1}$ by $\eta_t = 1/\lambda$ yields an unbiased
    estimator of the gradient difference.
 
    \item \textbf{Beta Identity (Constrained):} If
    $s \sim \mathrm{Beta}(1, K)$, then
    \begin{equation}
        \mathbb{E}[g(s) - g(0)]
        = \mathbb{E}\!\left[\tfrac{1-s}{K}\, g'(s)\right].
    \end{equation}
    The Hessian-Vector Product is re-weighted by $(1-s)/K$ to account
    for the compact support of the Beta distribution.
\end{enumerate}
\end{lemma}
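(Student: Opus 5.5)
Both identities follow from the same idea: write $g(s)-g(0)=\int_0^s g'(z)\,dz$, take expectations over $s$, and swap the order of integration with Fubini. The inner integral over $s$ then produces the tail probability $\mathbb{P}(s>z)$. Each identity amounts to the observation that this tail equals the density times a simple weight. The plan is to prove a single general statement and then specialize it.

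\textbf{General step.} Let $s$ have density $p$ supported on $[0,b)$, with $b\in\{1,\infty\}$, and tail $S(z)=\mathbb{P}(s>z)$. I would write
\begin{equation*}
\mathbb{E}[g(s)-g(0)] = \int_0^b p(s)\int_0^s g'(z)\,dz\,ds = \int_0^b g'(z)\,S(z)\,dz .
\end{equation*}
Here I assume $g(s)-g(0)=\int_0^s g'$, which holds for absolutely continuous $g$; I would state this as the intended reading of ``differentiable with integrable derivative.''

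The swap is justified by Tonelli applied to $p(s)\|g'(z)\|_2\mathbf{1}\{z<s\}$, whose integral is $\int_0^b \|g'(z)\|_2 S(z)\,dz$. Since $S\le 1$, this is at most $\int_0^b\|g'\|_2<\infty$ by hypothesis. In the Beta case integrability of $g'$ is only needed on $[0,1]$, which the hypothesis covers. The swap is applied coordinatewise to the vector-valued $g$.

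\textbf{Exponential case.} Here $p(z)=\lambda e^{-\lambda z}$ and $S(z)=e^{-\lambda z}=p(z)/\lambda$. Hence
\begin{equation*}
\int_0^\infty g'(z)S(z)\,dz=\frac{1}{\lambda}\int_0^\infty g'(z)p(z)\,dz=\frac{1}{\lambda}\mathbb{E}[g'(s)],
\end{equation*}
and $\mathbb{E}[g'(s)]$ is finite because $p\le\lambda$ and $g'$ is integrable.

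\textbf{Beta case.} For $\mathrm{Beta}(1,K)$ we have $p(z)=K(1-z)^{K-1}$ on $[0,1]$ and $S(z)=(1-z)^K=\frac{1-z}{K}\,p(z)$. Substituting gives
\begin{equation*}
\mathbb{E}\!\left[\tfrac{1-s}{K}g'(s)\right],
\end{equation*}
which is again finite since $\frac{1-z}{K}p(z)\le 1$.

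The interpretive remark about $x_{t+1}$ follows by taking $g(s)=\nabla f(x_t+sd_t)$, so that $g'(s)=\nabla^2 f(x_t+sd_t)d_t$, and noting that $\eta_t=\mathbb{E}[s]=1/\lambda$.

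\textbf{Main obstacle.} The only delicate point is the regularity needed for the fundamental theorem of calculus and for Fubini. Mere differentiability with $g'\in L^1$ does not guarantee $g(s)-g(0)=\int_0^s g'$ in pathological cases. I would handle this by invoking the standard result that a function differentiable everywhere with Lebesgue-integrable derivative is absolutely continuous; failing that, I would simply assume $g\in C^1$, which holds when $f\in C^2$ as in the application. Once this is settled, the tail-equals-weighted-density computation above is the whole proof.
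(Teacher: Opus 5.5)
Your proposal is correct and follows the paper's proof: both establish the master identity $\mathbb{E}[g(s)-g(0)]=\int_0^\infty g'(z)\,\mathbb{P}(s>z)\,dz$ via the fundamental theorem of calculus and Fubini, then use $e^{-\lambda z}=\tfrac{1}{\lambda}\cdot\lambda e^{-\lambda z}$ and $(1-z)^K=\tfrac{1-z}{K}\cdot K(1-z)^{K-1}$. Your regularity discussion is more careful than the paper's, which simply assumes $g\in C^1$ in the appendix version; the theorem you cite (everywhere differentiable with $L^1$ derivative implies $g(s)-g(0)=\int_0^s g'$) already covers the FTC step, so the pathological case you flag does not actually arise.
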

 
\textit{Proof sketch.} Both identities follow from the integration-by-parts
formula
\[
    \mathbb{E}[g(s)-g(0)] = \int_0^\infty g'(z)\,(1 - F(z))\, dz,
\]
where $F$ is the CDF of $s$. For $s \sim \mathrm{Exp}(\lambda)$, the
survival function $1-F(z) = e^{-\lambda z}$ is proportional to the PDF,
yielding the scalar factor $1/\lambda$. For $s \sim \mathrm{Beta}(1, K)$,
the survival function $(1-z)^K$ gives the weight $(1-z)/K$ times the
PDF. See Appendix~\ref{app:stein} for the full proof.

\subsection{Joint Efficient Computation via Automatic Differentiation}
A critical theoretical advantage of RanSOM translates directly into practical efficiency. The correction term requires evaluating the Hessian-Vector Product (HVP) $h_{t+1} = \nabla^2 f(x_{t+1}) d_t$ at the \textit{next} iterate $x_{t+1}$.
In modern Automatic Differentiation (AD) frameworks (e.g., PyTorch, JAX), this operation does \textbf{not} require materializing the full Hessian. Instead, it is computed via Pearlmutter's trick:
\begin{equation}
    h_{t+1} = \nabla_{x} (\langle \nabla f(x_{t+1}), \text{stop\_grad}(d_t) \rangle).
\end{equation}
Crucially, this computation shares the forward pass and the backward graph with the standard gradient computation. By computing the gradient $g_{t+1} = \nabla f(x_{t+1})$ and the HVP $h_{t+1}$ in a single combined backward pass, the total cost is roughly $2\times$ that of a standard forward pass—comparable to standard SGD and significantly cheaper than methods requiring auxiliary point evaluations (which cost $3\times$ or more).

\subsection{Algorithm~\ref{alg:expsom}: RanSOM-E (Unconstrained / Normalized)}
For unconstrained optimization on $\mathbb{R}^d$, we employ \textit{the Exponential Identity}. To handle potential heavy-tailed noise and adapt to the geometry, we determine the update direction using a Linear Minimization Oracle (LMO) over a norm ball $\mathcal{B}_\rho = \{v : \|v\| \le \rho\}$ (see Algorithm~\ref{alg:expsom}). This framework naturally unifies several modern optimization paradigms:

\begin{itemize}
    \item \textbf{Euclidean Case ($L_2$):} The LMO yields $d_t = -\rho \frac{m_t}{\|m_t\|_2}$. This recovers \textbf{Normalized SGD}, connecting our method to robust techniques but with added acceleration.
    \item \textbf{Coordinate-Wise Geometry ($L_\infty$):} The LMO yields $d_t = -\rho \cdot \text{sign}(m_t)$. This recovers \textbf{SignSGD}~\cite{bernstein2018signsgd}, known for its communication efficiency and robustness to magnitude variance.
    \item \textbf{Spectral Geometry (Schatten Norms):} For matrix parameters (e.g., in Transformers), the LMO can enforce spectral constraints. This recovers \textbf{Muon}-style updates~\cite{jordan2024muon}, where $d_t$ is computed via Newton-Schulz iterations to orthogonalize the update.
\end{itemize}

\begin{figure*}[t]
    \centering
    \begin{minipage}{0.48\textwidth}
        \begin{algorithm}[H]
            \caption{RanSOM-E: Exponential-Distributed Second-Order Momentum}
            \label{alg:expsom}
            \begin{algorithmic}[1]
                \STATE {\bfseries Input:} Initial $x_0$, rate $\eta_t$, mom. $\beta$, radius $\rho$, batches $B_{init},B$
                \STATE Initialize $m_0 = \frac{1}{B_{init}} \sum_{\xi \in \mathcal{B}_0} \nabla f_{\xi}(x_0)$
                \FOR{$t=0$ {\bfseries to} $T-1$}
                \STATE \textit{// 1. Geometric Update Direction (LMO)}
                \STATE Solve $d_t = \argmin_{v : \|v\| \le \rho} \langle m_t, v \rangle$
                \STATE \quad $\triangleright$ \textbf{Norm. SGD:} $d_t \leftarrow -\rho \cdot m_t / \|m_t\|_2$
                \STATE \quad $\triangleright$ \textbf{SignSGD:} $d_t \leftarrow -\rho \cdot \text{sign}(m_t)$
                \STATE \quad $\triangleright$ \textbf{Muon:} $d_t \leftarrow -\rho \cdot \text{NewtonSchulz}(m_t)$
                
                \STATE \textit{// 2. Randomized Step (Stein's Trick)}
                \STATE Sample $s_t \sim \mathrm{Exp}(1/\eta_t)$ 
                \STATE Update: $x_{t+1} = x_t + s_t d_t$
                
                \STATE \textit{// 3. Joint Computation}
                \STATE Sample batch $\mathcal{B}_{t+1}$
                \STATE Compute $(g_{t+1}, h_{t+1})$ via sim. backprop:
                \STATE \quad $g_{t+1} = \frac{1}{B} \sum_{\xi \in \mathcal{B}_{t+1}} \nabla f_{\xi}(x_{t+1})$
                \STATE \quad $h_{t+1} = \frac{1}{B} \sum_{\xi \in \mathcal{B}_{t+1}} \nabla^2 f_{\xi}(x_{t+1}) d_t$
                
                \STATE \textit{// 4. Bias Correction}
                \STATE Estimate Bias: $\delta_{t+1} = \eta_t \cdot h_{t+1}$ 
                \STATE Update $m_{t+1} = (1-\beta)(m_t + \delta_{t+1}) + \beta g_{t+1}$
                \ENDFOR
            \end{algorithmic}
        \end{algorithm}
    \end{minipage}
    \hfill 
    \begin{minipage}{0.48\textwidth}
        \begin{algorithm}[H]
            \caption{RanSOM-B: Beta-Distributed Second-Order Momentum}
            \label{alg:betasom}
            \begin{algorithmic}[1]
                \STATE {\bfseries Input:} $x_0 \in \mathcal{C}$, rate $\eta_t \in (0, 1)$, mom. $\beta$, batches $B_{init},B$
                \STATE Initialize $m_0 = \frac{1}{B_{init}} \sum_{\xi \in \mathcal{B}_0} \nabla f_{\xi}(x_0)$
                \FOR{$t=0$ {\bfseries to} $T-1$}
                \STATE \textit{// 1. Frank-Wolfe Direction}
                \STATE Solve LMO: $v_t = \argmin_{v \in \mathcal{C}} \langle m_t, v \rangle$
                \STATE Set direction: $d_t = v_t - x_t$
                
                \STATE \textit{// 2. Randomized Feasible Step}
                \STATE Set parameter $K_t = \frac{1}{\eta_t} - 1$
                \STATE Sample $s_t \sim \mathrm{Beta}(1, K_t)$ 
                \STATE Update: $x_{t+1} = x_t + s_t d_t$ 
                
                \STATE \textit{// 3. Joint Computation}
                \STATE Sample batch $\mathcal{B}_{t+1}$ and compute $(g_{t+1}, h_{t+1})$ at $x_{t+1}$
                \vspace{2.25em} 
                
                \STATE \textit{// 4. Weighted Bias Correction}
                \STATE Calculate Stein Weight: $w_t = \frac{1-s_t}{K_t}$
                \STATE Estimate Bias: $\delta_{t+1} = w_t \cdot h_{t+1}$
                \STATE Update $m_{t+1} = (1-\beta)(m_t + \delta_{t+1}) + \beta g_{t+1}$
                \ENDFOR
                \vspace{0.35em} 
            \end{algorithmic}
        \end{algorithm}
    \end{minipage}
\end{figure*}

\subsection{Algorithm~\ref{alg:betasom}: RanSOM-B (Constrained)}
For constrained optimization over a convex set $\mathcal{C}$, using an Exponential step size is invalid because $x_{t+1}$ might leave the domain. Instead, we use the \textit{Beta Identity}. We draw steps from a Beta distribution supported on $[0, 1]$, ensuring that the convex combination $x_{t+1} = (1-s_t)x_t + s_t v_t$ remains strictly feasible (see Algorithm~\ref{alg:betasom}).

\section{Theoretical Analysis}
\label{sec:theory}

In this section, we establish the convergence of RanSOM for non-convex optimization. Our analysis highlights three key theoretical advantages of the proposed framework:

\begin{enumerate}
    \item \textbf{Minimal Assumptions:} Unlike recursive variance reduction methods such as STORM~\cite{cutkosky2019storm} or SPIDER~\cite{fang2018spider}, we do \textbf{not} require the individual stochastic functions $f_\xi(x)$ to be smooth (almost surely), nor do we assume bounded variance. We handle heavy-tailed noise directly, similar to recent robust techniques~\cite{liu2023nonconvex, hubler2025gradient}. Furthermore, unlike prior second-order methods (e.g., \cite{tran2022better, khirirat2025better}), we do \textbf{not} require the Hessian to be Lipschitz continuous.
    
    \item \textbf{No Overhead:} Our correction term uses a Hessian-vector product at $x_{t+1}$, which is the exact point required for the gradient evaluation in the subsequent step. This avoids the need for "look-ahead" points or double-evaluations common in other variance-reduced estimators like SOM-Unif~\cite{salehkaleybar2022momentum, zhang2020one}.
    
    \item \textbf{Clipping-Free:} We achieve optimal convergence rates without the need for gradient clipping. By identifying the appropriate batch size and step size scaling, RanSOM naturally handles heavy-tailed noise, generalizing the normalization benefits observed in first-order methods~\cite{hubler2025gradient}.
\end{enumerate}

\subsection{Preliminaries \& Notation}

We consider optimization over a finite-dimensional real vector space $\mathcal{E} = \R^d$ equipped with a general norm $\|\cdot\|$.
\begin{itemize}
    \item \textbf{Primal Norm:} For $x \in \mathcal{E}$, we denote the norm by $\|x\|$.
    \item \textbf{Dual Norm:} The space of gradients is the dual space $\mathcal{E}^*$. For $g \in \mathcal{E}^*$, the dual norm is defined as $\|g\|_* := \sup_{\|x\| \le 1} \langle g, x \rangle$.
    \item \textbf{Operator Norm:} The Hessian $\nabla^2 f(x)$ is a linear operator mapping $\mathcal{E} \to \mathcal{E}^*$. We define the induced operator norm as:
    \begin{equation*}
        \|\nabla^2 f(x)\|_{op} := \sup_{\|u\| \le 1} \|\nabla^2 f(x) u\|_* = \sup_{\|u\| \le 1, \|v\| \le 1} \langle \nabla^2 f(x) u, v \rangle.
    \end{equation*}
\end{itemize}

\subsection{Assumptions}

We analyze our method under generalized assumptions capturing modern deep learning dynamics.

\begin{assumption}[Well--Defined Problem]
\label{ass:well_defined}
$f : \mathcal{E} \to \mathbb{R}$ is bounded below by $f_* > -\infty$; let
$\Delta_0 \triangleq f(x_0) - f_*$.
\end{assumption}

\begin{assumption}[Pointwise $(L_0, L_1)$--Smoothness]
\label{ass:geometric_smoothness}
$f$ is twice continuously differentiable and there exist constants
$L_0, L_1 \geq 0$ such that
\begin{equation}
  \|\nabla^2 f(x)\|_{\mathrm{op}}
  \;\leq\; L_0 + L_1 \|\nabla f(x)\|_*
  \qquad \text{for every } x \in \mathcal{E}.
\end{equation}
\end{assumption}

\begin{assumption}[Norm Compatibility]
\label{ass:norm_compatibility}
There exists $\kappa \geq 1$ such that $\|u\|_* \leq \kappa \|u\|_2$ and
$\|u\|_2 \leq \kappa \|u\|_*$ for every $u \in \mathcal{E}^* \cup
\mathcal{E}$.
\end{assumption}

\begin{assumption}[Affine Heavy--Tailed Noise]
\label{ass:noise}
The stochastic gradient $g(x) = \nabla f_\xi(x)$ and the Hessian--vector
product oracle $H(x)w = \nabla^2 f_\xi(x) w$ are unbiased, and there
exist constants $\sigma_g, \alpha_g, \sigma_h, \alpha_h \geq 0$ and
exponents $p, q \in (1, 2]$ such that, for every $x \in \mathcal{E}$ and
every deterministic $w \in \mathcal{E}$ independent of the oracle noise
$\xi$,
\begin{align}
  \mathbb{E}_\xi\bigl[\|g(x) - \nabla f(x)\|_2^p \,\big|\, x\bigr]
    &\leq \sigma_g^p + \alpha_g^p \|\nabla f(x)\|_*^p, \\
  \mathbb{E}_\xi\bigl[\|(H(x) - \nabla^2 f(x))w\|_2^q \,\big|\, x, w\bigr]
    &\leq \bigl(\sigma_h^q + \alpha_h^q \|\nabla f(x)\|_*^q\bigr) \|w\|^q.
\end{align}
\end{assumption}

\textbf{Discussion on Assumptions.} Our framework relaxes classical restrictions to capture state-of-the-art non-convex optimization dynamics. Assumption~\ref{ass:geometric_smoothness} models the "exploding gradient" problem prevalent in Transformers \cite{zhang2019gradient}. Furthermore, Assumption~\ref{ass:noise} unifies affine variance growth \cite{bottou2018optimization} with heavy-tailed noise ($p, q < 2$) \cite{simsekli2019tail}, generalizing standard robust momentum methods that strictly assume bounded variance or specific tail indices \cite{cutkosky2019storm, gorbunov2020stochastic}.

\textbf{Simplified Setting.} For clarity, main-text theorems are presented under a standard simplified setting: $L_1 = 0$ (globally bounded Hessian), $\alpha_g = \alpha_h = 0$ (no affine noise growth), and batch size $B=1$ (the initial batch $B_{\mathrm{init}}$ is still chosen large enough to suppress initialization error). General convergence results recovering arbitrary $L_1, \alpha_g, \alpha_h \ge 0$ and $B \ge 1$ are fully detailed in Appendix~\ref{app:theory}.
\subsection{Unconstrained Optimization (RanSOM-E)}

The convergence analysis relies on bounding the momentum estimation error $e_t = m_t - \nabla f(x_t)$ and coupling it with a descent inequality derived from the Exponential step distribution.

\begin{lemma}[Descent Inequality]
\label{lemma:main_descent}
Let $C_s = \mathbb{E}[s_t^2]/\eta_t^2 = 2$ denote the second moment
constant of $s_t \sim \mathrm{Exp}(1/\eta_t)$, and let $\kappa$ be the
norm compatibility constant of Assumption~\ref{ass:norm_compatibility}.
For any step size $\eta_t \le 1/(\rho L_0)$, the RanSOM-E update
satisfies
\begin{equation}
\label{eq:main_descent}
    \mathbb{E}[f(x_{t+1})] - f(x_t)
    \le -\frac{\rho \eta_t}{2}\,\mathbb{E}[\|\nabla f(x_t)\|_*]
      + 2\rho\kappa\eta_t\,\mathbb{E}[\|e_t\|_2]
      + \frac{C_s \rho^2 L_0}{2}\,\eta_t^2,
\end{equation}
where $e_t = m_t - \nabla f(x_t)$ is the momentum error.
\end{lemma}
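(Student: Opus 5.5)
Since $\nabla^2 f$ is globally bounded by $L_0$ in the simplified setting ($L_1=0$), we use the standard second-order Taylor bound along the segment from $x_t$ to $x_{t+1} = x_t + s_t d_t$:
\[
f(x_{t+1}) \le f(x_t) + s_t\langle \nabla f(x_t), d_t\rangle + \tfrac{L_0}{2} s_t^2 \|d_t\|^2 .
\]
This follows from $f(x_{t+1}) - f(x_t) - s_t\langle\nabla f(x_t),d_t\rangle = \int_0^{s_t}(s_t-\tau)\langle \nabla^2 f(x_t+\tau d_t)d_t, d_t\rangle d\tau$ and $|\langle \nabla^2 f(\cdot) u,u\rangle| \le \|\nabla^2 f\|_{op}\|u\|^2$. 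The LMO output satisfies $\|d_t\|\le\rho$. The key structural fact is that $s_t \sim \mathrm{Exp}(1/\eta_t)$ is drawn independently of $x_t$, $m_t$, and hence of $d_t$. We therefore take the expectation over $s_t$ conditioned on $\mathcal{F}_t$ (which contains $x_t, m_t$), using $\mathbb{E}[s_t]=\eta_t$ and $\mathbb{E}[s_t^2]=C_s\eta_t^2 = 2\eta_t^2$. This gives
\[
\mathbb{E}[f(x_{t+1})\mid\mathcal{F}_t] - f(x_t) \le \eta_t \langle \nabla f(x_t), d_t\rangle + \tfrac{C_s\rho^2 L_0}{2}\eta_t^2 .
\]
The unboundedness of the exponential support causes no trouble here, because $L_0$ is a global bound.

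Next we bound the linear term with the usual LMO argument. Write $\nabla f(x_t) = m_t - e_t$. By optimality of $d_t$ we have $\langle m_t, d_t\rangle = -\rho\|m_t\|_*$. Hence
\[
\langle \nabla f(x_t), d_t\rangle = -\rho\|m_t\|_* - \langle e_t, d_t\rangle \le -\rho\|m_t\|_* + \rho\|e_t\|_* .
\]
The triangle inequality gives $\|m_t\|_* \ge \|\nabla f(x_t)\|_* - \|e_t\|_*$, so
\[
\langle \nabla f(x_t), d_t\rangle \le -\rho\|\nabla f(x_t)\|_* + 2\rho\|e_t\|_* \le -\rho\|\nabla f(x_t)\|_* + 2\rho\kappa\|e_t\|_2 ,
\]
where the last step uses Assumption~\ref{ass:norm_compatibility}. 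This already yields coefficient $-\rho\eta_t$ on the gradient norm, which is stronger than the stated $-\rho\eta_t/2$. The stated version follows by simply relaxing it, since $\|\nabla f(x_t)\|_* \ge 0$. The factor $1/2$ presumably leaves room for the $L_1>0$ case in the appendix, where one absorbs $\tfrac{C_s\rho^2L_1}{2}\eta_t^2\|\nabla f\|_*$. The hypothesis $\eta_t \le 1/(\rho L_0)$ is not needed for the $L_1=0$ inequality as stated, but we will note that it is harmless.

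Finally, we take total expectations and apply the tower property to obtain \eqref{eq:main_descent}.

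The main subtle point is the independence and measurability structure. We must check that $d_t$ is $\mathcal{F}_t$-measurable and that $s_t$ is independent of $\mathcal{F}_t$. This lets $\mathbb{E}[s_t\langle\nabla f(x_t),d_t\rangle\mid\mathcal{F}_t] = \eta_t\langle\nabla f(x_t),d_t\rangle$, and it is what makes the randomized step behave like a deterministic step of size $\eta_t$ to first order. The remaining work is using the second moment $C_s$ rather than $\eta_t^2$ in the quadratic term. If the intended statement also covers $L_1>0$, the hardest part would be controlling the Hessian along an unbounded exponential segment. There I would use a Grönwall-type bound on $\|\nabla f\|_*$ along the path, together with the exponential's moment generating function, which is finite when $\eta_t\rho L_1<1$.
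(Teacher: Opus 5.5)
Your proposal is correct and follows essentially the same route as the paper's proof (Lemma~\ref{lem:app:descent}). Both use a second-order Taylor expansion along $[x_t,x_{t+1}]$, then a conditional expectation over the independent step $s_t$ with $\mathbb{E}[s_t]=\eta_t$ and $\mathbb{E}[s_t^2]=C_s\eta_t^2$, then the LMO bound $\langle\nabla f(x_t),d_t\rangle\le-\rho\|\nabla f(x_t)\|_*+2\rho\kappa\|e_t\|_2$. As you anticipated, the paper carries the $L_1$ terms through a Gr\"onwall bound and the exponential MGF, and spends half of the descent term absorbing the gradient-dependent curvature, which is where the factor $1/2$ comes from. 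At $L_1=0$ its argument reduces exactly to yours, and there it also never uses the hypothesis $\eta_t\le 1/(\rho L_0)$.
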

 
\noindent The proof combines a Taylor expansion with $L_0$-smoothness
and the LMO optimality of $d_t$; see Appendix~\ref{app:descent-e} for
details (which also covers the general $(L_0, L_1)$ case).

\noindent The critical challenge is bounding the accumulated error
$\mathbb{E}[\|e_t\|_2]$. By the von Bahr-Esseen inequality for
martingale difference sequences, we obtain a bound depending explicitly
on the noise indices $p, q$.
 
\begin{lemma}[Momentum Error Bound]
\label{lemma:main_error}
Under the simplified setting, for any $\beta \in (0, 1/2]$ and $B = 1$,
the averaged momentum error satisfies
\begin{equation}
\label{eq:main_error}
    \frac{1}{T}\sum_{t=0}^{T-1}\mathbb{E}[\|e_t\|_2]
    \le \underbrace{\mathcal{O}\!\left(\frac{\sigma_g}{\beta T\, B_{\mathrm{init}}^{1-1/p}}\right)}_{\text{Initialization}}
    + \underbrace{\mathcal{O}\!\left(\beta^{1-1/p}\,\sigma_g\right)}_{\text{Gradient Noise}}
    + \underbrace{\mathcal{O}\!\left(\eta\,\beta^{-1/q}\,\bar\sigma_h\right)}_{\text{Hessian Bias}},
\end{equation}
where $\bar\sigma_h = \rho(\sigma_h + \kappa L_0)$ is the effective
Hessian-noise constant.
\end{lemma}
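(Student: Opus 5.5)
We unroll the momentum recursion for $e_t = m_t - \nabla f(x_t)$ and split it into a martingale part and an initialization part. Using $m_{t+1} = (1-\beta)(m_t + \eta_t h_{t+1}) + \beta g_{t+1}$ and subtracting $\nabla f(x_{t+1})$ gives
\begin{equation*}
e_{t+1} = (1-\beta) e_t + (1-\beta)\bigl(\nabla f(x_t) - \nabla f(x_{t+1}) + \eta_t h_{t+1}\bigr) + \beta\bigl(g_{t+1} - \nabla f(x_{t+1})\bigr).
\end{equation*}
We write $\zeta_{t+1} := \nabla f(x_t) - \nabla f(x_{t+1}) + \eta_t h_{t+1}$ and $\epsilon_{t+1} := g_{t+1} - \nabla f(x_{t+1})$. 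Let $\mathcal{F}_t$ be the sigma-algebra generated up to $x_t, m_t$.

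\textbf{Step 1: both increments are martingale differences.} We have $\mathbb{E}[\epsilon_{t+1}\mid \mathcal{F}_t, s_t] = 0$ by unbiasedness of the oracle. For $\zeta_{t+1}$, we first condition on $s_t$ and replace $h_{t+1}$ by $\nabla^2 f(x_{t+1}) d_t$; this uses unbiasedness of the HVP oracle, which is legitimate because $d_t$ is $\mathcal{F}_t$-measurable and independent of $\xi_{t+1}$. We then take the expectation over $s_t \sim \mathrm{Exp}(1/\eta_t)$ and apply the Exponential Identity of Lemma~\ref{lemma:stein} with $g(s) = \nabla f(x_t + s d_t)$. This gives $\mathbb{E}[\nabla f(x_{t+1}) - \nabla f(x_t)\mid\mathcal{F}_t] = \eta_t\, \mathbb{E}[\nabla^2 f(x_{t+1}) d_t\mid\mathcal{F}_t]$, so $\mathbb{E}[\zeta_{t+1}\mid\mathcal{F}_t]=0$. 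The integrability hypothesis of the lemma holds because $L_1=0$ makes $\|g'(z)\|$ bounded; combined with the exponential weight, the relevant integrals are finite.

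\textbf{Step 2: unroll.} Unrolling gives
\begin{equation*}
e_t = (1-\beta)^t e_0 + \sum_{k=1}^{t} (1-\beta)^{t-k}\bigl[(1-\beta)\zeta_k + \beta \epsilon_k\bigr].
\end{equation*}
The first term satisfies $\mathbb{E}\|e_0\|_2 \le (\mathbb{E}\|e_0\|_2^p)^{1/p} \lesssim \sigma_g B_{\mathrm{init}}^{1/p-1}$, by von Bahr--Esseen applied to the $B_{\mathrm{init}}$ i.i.d. samples. Averaging $(1-\beta)^t$ over $t$ gives at most $1/(\beta T)$, which produces the initialization term.

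For the two martingale sums we use Jensen, $\mathbb{E}\|\cdot\|_2 \le (\mathbb{E}\|\cdot\|_2^r)^{1/r}$, together with the von Bahr--Esseen inequality in $\ell_2$ (valid for $r\in(1,2]$ with constant $2$). We treat the $\epsilon$ sum with exponent $p$ and the $\zeta$ sum with exponent $q$, separately by the triangle inequality.
\begin{itemize}
\item The $\epsilon$ sum gives $\beta\bigl(2\sum_k (1-\beta)^{p(t-k)}\sigma_g^p\bigr)^{1/p} \lesssim \beta\cdot \beta^{-1/p}\sigma_g = \beta^{1-1/p}\sigma_g$, using $\sum_j (1-\beta)^{pj} \le 1/(1-(1-\beta)^p) \le 1/\beta$.
\item For the $\zeta$ sum we need $\mathbb{E}[\|\zeta_k\|_2^q\mid\mathcal{F}_{k-1}] \lesssim (\eta\bar\sigma_h)^q$.
\end{itemize}

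\textbf{Step 3: the $q$-th moment of $\zeta_k$ (the main obstacle).} We split
\begin{equation*}
\zeta_k = \eta\bigl(H(x_k)-\nabla^2 f(x_k)\bigr)d_{k-1} + \Bigl(\eta\nabla^2 f(x_k)d_{k-1} - (\nabla f(x_k)-\nabla f(x_{k-1}))\Bigr).
\end{equation*}
\begin{itemize}
\item The first piece has $q$-th moment at most $\eta^q\sigma_h^q\rho^q$ by Assumption~\ref{ass:noise}, since $\|d_{k-1}\|\le\rho$.
\item For the second piece, $\|\nabla^2 f\|_{op}\le L_0$ and norm compatibility give $\|\eta\nabla^2 f(x_k)d_{k-1}\|_2 \le \kappa\eta L_0\rho$. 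The mean value bound gives $\|\nabla f(x_k)-\nabla f(x_{k-1})\|_2 \le \kappa L_0 s_{k-1}\rho$. Hence this piece is at most $\kappa L_0\rho(\eta + s_{k-1})$.
\end{itemize}
The delicate point is that $s_{k-1}$ is unbounded. However, $\mathbb{E}[s^q] \le (\mathbb{E}s^2)^{q/2} = (\sqrt{2}\eta)^q$, so the $q$-th moment is still $O((\eta\kappa L_0\rho)^q)$. Combining the two pieces gives $(\mathbb{E}\|\zeta_k\|^q)^{1/q} \lesssim \eta\rho(\sigma_h+\kappa L_0) = \eta\bar\sigma_h$.

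Von Bahr--Esseen then bounds the $\zeta$ sum by $\eta\bar\sigma_h\,\beta^{-1/q}$. Here $(1-\beta)\le 1$ is absorbed, and $\beta\le 1/2$ ensures that $1-(1-\beta)^q \ge c\beta$. Averaging over $t$ yields the three claimed terms.
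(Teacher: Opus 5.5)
Your proposal is correct and follows essentially the same route as the paper. You unroll $e_t$ into an initialization term plus two martingale sums (gradient noise and the Stein-centered correction error), apply Jensen and von Bahr--Esseen with exponents $p$ and $q$ separately, and bound the $q$-th moment of the correction error by splitting it into HVP noise plus a pathwise term of size $\kappa L_0\rho(\eta+s_{k-1})$, which is exactly the paper's Hessian-noise lemma specialized to $L_1=\alpha_h=0$.

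Two of your steps tidy up points the paper leaves implicit. First, your explicit bound $\mathbb{E}\|e_0\|_2\lesssim \sigma_g B_{\mathrm{init}}^{-(1-1/p)}$ supplies the link to the stated initialization term, which the paper only writes as $\|e_0\|_2/(\beta T)$. Second, you correctly note that the Stein identity only needs the survival-weighted integral $\int_0^\infty \|g'(z)\|e^{-z/\eta}\,dz$ to be finite, since the lemma's literal integrable-derivative hypothesis fails when $g'$ is merely bounded.
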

 
\noindent Note the asymmetric scaling: the gradient noise contribution
shrinks with $\beta^{1-1/p}$ (smaller momentum helps average out
high-variance gradients) while the Hessian bias \emph{grows} with
$\beta^{-1/q}$ (smaller momentum makes the correction more impactful).
The optimal $\beta$ balances these two opposing effects.

\noindent Combining Lemmas~\ref{lemma:main_descent}
and~\ref{lemma:main_error} and optimizing $\eta$ and $\beta$ yields our
main theorem.
 
\begin{theorem}[Convergence of RanSOM-E]
\label{thm:expsom_convergence}
Let $A = \frac{p-1}{p}$ and $K = \frac{1}{q}$. Under the simplified
setting, and with the optimal choice
\begin{equation}
    \eta \asymp T^{-\frac{q(p-1)+p}{2q(p-1)+p}},
    \qquad
    \beta \asymp T^{-\frac{pq}{2q(p-1)+p}},
\end{equation}
and with $B_{\mathrm{init}}$ chosen large enough that the
initialization term is dominated by the main rate, RanSOM-E converges
to a stationary point at the rate
\begin{equation}
\label{eq:expsom_rate}
    \frac{1}{T}\sum_{t=0}^{T-1}\mathbb{E}[\|\nabla f(x_t)\|_*]
    \le 
    \mathcal{O}\!\left(\underbrace{
        (\Delta_0\bar\sigma_h)^{\tfrac{A}{2A+K}}\,
        \sigma_g^{\tfrac{K}{2A+K}}\;
        T^{-\tfrac{q(p-1)}{2q(p-1)+p}}
    }_{\text{Main Variance Rate}}
    + \underbrace{\sqrt{\Delta_0 L_0}\, T^{-1/2}}_{\text{Smoothness Rate}}
    + \underbrace{\Delta_0 L_0\, T^{-1}}_{\text{Geometric Limit Rate}}\right).
\end{equation}
\end{theorem}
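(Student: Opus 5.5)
The plan is to telescope the descent inequality of Lemma~\ref{lemma:main_descent}, substitute the averaged error bound of Lemma~\ref{lemma:main_error}, and then balance the resulting terms by choosing $\eta$ and $\beta$ as powers of $T$ (with constants). Throughout I take a constant step $\eta_t\equiv\eta$ and a constant momentum $\beta$.

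\emph{Step 1 (telescoping).} Summing \eqref{eq:main_descent} over $t=0,\dots,T-1$ and using $f(x_T)\ge f_*$ (Assumption~\ref{ass:well_defined}) gives
\begin{equation*}
\frac{\rho\eta}{2}\sum_{t=0}^{T-1}\mathbb{E}\|\nabla f(x_t)\|_*
\le \Delta_0 + 2\rho\kappa\eta\sum_{t=0}^{T-1}\mathbb{E}\|e_t\|_2 + \frac{C_s\rho^2L_0}{2}\,\eta^2 T .
\end{equation*}
Dividing by $\rho\eta T/2$ yields
\begin{equation*}
\frac1T\sum_{t=0}^{T-1}\mathbb{E}\|\nabla f(x_t)\|_*
\le \frac{2\Delta_0}{\rho\eta T} + \frac{4\kappa}{T}\sum_{t=0}^{T-1}\mathbb{E}\|e_t\|_2 + C_s\rho L_0\,\eta .
\end{equation*}

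\emph{Step 2 (error bound).} Insert Lemma~\ref{lemma:main_error}. The initialization term is made negligible by taking $B_{\mathrm{init}}$ large, as the statement allows. What remains is, up to constants,
\begin{equation*}
\frac{\Delta_0}{\rho\eta T} + \sigma_g\beta^{A} + \eta\,\beta^{-K}\bar\sigma_h + \rho L_0\eta ,
\end{equation*}
where $A=1-1/p$ and $K=1/q$.

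\emph{Step 3 (balancing the variance terms).} Ignoring the smoothness term for now, I equate the three terms
\begin{equation*}
\frac{\Delta_0}{\eta T}, \qquad \eta\beta^{-K}\bar\sigma_h, \qquad \sigma_g\beta^A .
\end{equation*}
\begin{itemize}
\item The first two give $\eta^2=\Delta_0\beta^K/(\bar\sigma_h T)$.
\item The last two give $\eta=\beta^{A+K}\sigma_g/\bar\sigma_h$.
\end{itemize}
Writing $\beta\asymp T^{-b}$ and $\eta\asymp T^{-a}$, the exponent equations are $2a=1+bK$ and $a=b(A+K)$. Hence
\begin{equation*}
b=\frac{1}{2A+K}=\frac{pq}{2q(p-1)+p},\qquad a=\frac{A+K}{2A+K}=\frac{q(p-1)+p}{2q(p-1)+p},
\end{equation*}
which are exactly the prescribed exponents. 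The resulting rate is
\begin{equation*}
\sigma_g\beta^A\asymp T^{-A/(2A+K)}=T^{-q(p-1)/(2q(p-1)+p)} .
\end{equation*}
Carrying the constants through the same two equations, eliminating $\eta$ gives
\begin{equation*}
\beta^{2A+K}\asymp \frac{\Delta_0\bar\sigma_h}{\sigma_g^2 T},
\end{equation*}
so that
\begin{equation*}
\sigma_g\beta^A\asymp(\Delta_0\bar\sigma_h)^{A/(2A+K)}\,\sigma_g^{1-2A/(2A+K)}\,T^{-A/(2A+K)} .
\end{equation*}
Since $1-2A/(2A+K)=K/(2A+K)$, this is the main variance rate in \eqref{eq:expsom_rate}.

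\emph{Step 4 (constraints and remaining terms).} Two constraints must still be enforced: $\eta\le 1/(\rho L_0)$ from Lemma~\ref{lemma:main_descent}, and $\beta\le 1/2$ from Lemma~\ref{lemma:main_error}.
\begin{itemize}
\item \emph{Step size.} I take $\eta=\min\{\eta_{\mathrm{var}},\eta_{\mathrm{sm}},1/(\rho L_0)\}$, where $\eta_{\mathrm{var}}$ is the Step~3 choice and $\eta_{\mathrm{sm}}=\sqrt{\Delta_0/(\rho^2L_0T)}$ balances $\Delta_0/(\rho\eta T)$ against $\rho L_0\eta$. The term $\Delta_0/(\rho\eta T)$ is then bounded by the sum of the three corresponding values. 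Each value produces one term of \eqref{eq:expsom_rate}: the variance rate, $\sqrt{\Delta_0L_0/T}$, and $\Delta_0L_0/T$ respectively. Each remaining term ($\rho L_0\eta$ and $\eta\beta^{-K}\bar\sigma_h$) only decreases when $\eta$ is reduced below its balanced value.
\item \emph{Momentum.} I set $\beta=\min\{\beta_{\mathrm{var}},1/2\}$. For $T$ large this cap is inactive; otherwise the capped regime is absorbed into the constants or the lower-order terms.
\end{itemize}

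\emph{Main obstacle.} The delicate part is this case analysis over the minima. When $\eta$ is capped below $\eta_{\mathrm{var}}$ while $\beta$ is kept at $\beta_{\mathrm{var}}$, the term $\eta\beta^{-K}\bar\sigma_h$ shrinks, which is harmless. But $\sigma_g\beta^A$ does not shrink, so it must be checked that it is still bounded by the main variance rate. My plan is to fix $\beta$ by the variance balance alone, independently of the $\eta$ cap. Then $\sigma_g\beta^A$ always equals the main variance rate, and only $\Delta_0/(\rho\eta T)$ picks up the extra smoothness and geometric terms. Throughout, the factors of $\rho$ and $\kappa$ are tracked and absorbed into $\mathcal{O}(\cdot)$.
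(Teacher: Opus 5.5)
Your proposal is correct and follows the paper's proof, which goes through the general RanSOM-E theorem and its simplified-setting corollary: you telescope the descent inequality, insert the averaged error bound, take $\eta$ as the minimum of the variance-balanced value, the smoothness-balanced value and $1/(\rho L_0)$, and bound the drift $\Delta_0/(\rho\eta T)$ by the sum of the three corresponding values. The only variation is in how $\beta$ is chosen: the paper takes $\beta=\beta_\star(\eta)\propto\eta^{1/(A+K)}$, so that $\beta$ shrinks along with a capped $\eta$ and the momentum error becomes a single term $2C_{\mathrm{mom}}\eta^{A/(A+K)}$ that is monotone in $\eta$. You instead freeze $\beta$ at the jointly balanced value, which works just as well and matches the stated $\beta\asymp T^{-pq/(2q(p-1)+p)}$ literally.

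One small correction, on a point the paper also skips (it never checks $\beta\le 1/2$): when $\beta_{\mathrm{var}}>1/2$, i.e.\ $\Delta_0\bar\sigma_h\gtrsim\sigma_g^2T$, capping $\beta$ at $1/2$ multiplies $\eta\beta^{-K}\bar\sigma_h$ by $(2\beta_{\mathrm{var}})^K$. That factor is not a bounded constant, so this regime cannot be "absorbed into the constants"; it is excluded only by reading the statement as asymptotic in $T$.
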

 
\textbf{Optimal Rate ($p = q = 2$).}
In the standard setting with finite variance ($A = K = 1/2$), the main
rate specializes to
\[
    \mathcal{O}\!\left((\Delta_0\,\sigma_g\,\bar\sigma_h)^{1/3}\,T^{-1/3}\right),
\]
matching the optimal rate for non-convex stochastic optimization
\cite{cutkosky2019storm, arjevani2023lower}. The associated optimal
momentum is $\beta \asymp T^{-2/3}$, consistent with the classical
STORM scaling.
 
\textbf{Optimality ($p = q$).}
When $p = q$, the main rate simplifies to $\mathcal{O}(T^{-(p-1)/(2p-1)})$,
matching the optimal sample complexity established by
\citet{sadiev2025second}. RanSOM achieves this through its intrinsic
LMO-based normalization, without explicit gradient or Hessian clipping.

\subsection{Constrained Optimization (RanSOM-B)}

For constrained problems, we minimize $f(x)$ over a compact convex set $\mathcal{C}$ with diameter $D \triangleq \sup_{x,y \in \mathcal{C}} \|x-y\|$. We use the Frank-Wolfe gap $\mathcal{G}(x) = \max_{v \in \mathcal{C}} \langle \nabla f(x), x-v \rangle$ as the convergence criterion.

\begin{lemma}[Constrained Descent Inequality]
\label{lemma:constrained_descent}
Let $s_t \sim \mathrm{Beta}(1, K_t)$ with mean $\eta_t = 1/(1 + K_t)$.
The second moment satisfies $\mathbb{E}[s_t^2] = C_s\,\eta_t^2$ with
$C_s = 2(1+K_t)/(2+K_t) \le 2$. Under the simplified setting ($L_1 =
0$, so $L_0$-smoothness is global on $\mathcal{C}$), the RanSOM-B
update satisfies
\begin{equation}
\label{eq:constrained_descent}
    \mathbb{E}[f(x_{t+1})] - f(x_t)
    \le -\eta_t\,\mathbb{E}[\mathcal{G}(x_t)]
      + 2D\eta_t\,\mathbb{E}[\|e_t\|_2]
      + \frac{C_s L_0 D^2}{2}\,\eta_t^2.
\end{equation}
\end{lemma}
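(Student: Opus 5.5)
The plan is to follow the standard Frank--Wolfe descent argument, conditioning on $\mathcal{F}_t$ (everything up to and including $x_t, m_t$). The one new ingredient is that the step $s_t$ is random and independent of $d_t$ given $\mathcal{F}_t$.

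First I would verify the moment claims for $s_t \sim \mathrm{Beta}(1,K_t)$. Its density is $K_t(1-z)^{K_t-1}$ on $[0,1]$, which gives $\mathbb{E}[s_t] = 1/(1+K_t) = \eta_t$ and $\mathbb{E}[s_t^2] = 2/((1+K_t)(2+K_t))$. Dividing by $\eta_t^2$ gives $C_s = 2(1+K_t)/(2+K_t)$, and this is at most $2$ since $1+K_t \le 2+K_t$.

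Next comes the smoothness step. Since $s_t \in [0,1]$ and $\mathcal{C}$ is convex, the whole segment $\{x_t + \tau s_t d_t : \tau \in [0,1]\}$ lies in $\mathcal{C}$, where $d_t = v_t - x_t$ satisfies $\|d_t\| \le D$. On this segment Assumption~\ref{ass:geometric_smoothness} with $L_1=0$ gives $\|\nabla^2 f\|_{op} \le L_0$. A second-order Taylor expansion with integral remainder then yields
\[
f(x_{t+1}) \le f(x_t) + s_t\langle \nabla f(x_t), d_t\rangle + \tfrac{L_0}{2} s_t^2 D^2 .
\]
Because $d_t$ is $\mathcal{F}_t$-measurable and $s_t$ is drawn independently, taking conditional expectation gives
\[
\mathbb{E}[f(x_{t+1})\mid\mathcal{F}_t] \le f(x_t) + \eta_t\langle\nabla f(x_t), d_t\rangle + \tfrac{C_s L_0 D^2}{2}\eta_t^2 .
\]
This is where randomization costs nothing in the first-order term and only changes the constant in the quadratic term.

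It remains to bound the linear term using LMO optimality. Let $v^\star \in \argmin_{v\in\mathcal{C}}\langle\nabla f(x_t), v\rangle$, so that $\langle \nabla f(x_t), v^\star - x_t\rangle = -\mathcal{G}(x_t)$. Writing $\nabla f(x_t) = m_t - e_t$ and using that $v_t$ minimizes $\langle m_t,\cdot\rangle$ over $\mathcal{C}$, I get
\[
\langle\nabla f(x_t), v_t - x_t\rangle = \langle m_t, v_t - x_t\rangle - \langle e_t, v_t-x_t\rangle \le \langle m_t, v^\star - x_t\rangle + D\|e_t\| .
\]
Expanding $\langle m_t, v^\star - x_t\rangle = -\mathcal{G}(x_t) + \langle e_t, v^\star - x_t\rangle$ then gives $\langle\nabla f(x_t), d_t\rangle \le -\mathcal{G}(x_t) + 2D\|e_t\|$. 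Multiplying by $\eta_t$ and taking total expectation gives \eqref{eq:constrained_descent}.

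The only delicate point I anticipate is the norm in which $e_t$ is measured. The pairing gives $|\langle e_t, v - x_t\rangle| \le \|e_t\|_*\|v-x_t\|$, whereas the statement uses $\|e_t\|_2$. I would handle this by Cauchy--Schwarz, reading $D$ as a bound on the Euclidean diameter in that term. Alternatively, I would invoke Assumption~\ref{ass:norm_compatibility}, which costs a factor $\kappa$ that is set aside in the simplified setting, matching how $\kappa$ appears explicitly in Lemma~\ref{lemma:main_descent}. Everything else is routine.
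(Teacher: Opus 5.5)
Your proposal is correct and follows the paper's proof essentially step for step: the $L_0$-smoothness bound along the segment (which stays in $\mathcal{C}$ because $s_t\in[0,1]$), conditional expectation using $\mathbb{E}[s_t]=\eta_t$ and $\mathbb{E}[s_t^2]=C_s\eta_t^2$, and the LMO comparison against $v^\star(x_t)$, which yields $-\mathcal{G}(x_t)+2D\|e_t\|_*$. The paper resolves the $\|e_t\|_*$ versus $\|e_t\|_2$ mismatch exactly as in your second option: it uses $\|e_t\|_*\le\kappa\|e_t\|_2$ and absorbs $\kappa$ into $D$ for the main-text display.
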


\begin{theorem}[Convergence of RanSOM-B]
\label{thm:betasom_convergence}
Under the same simplified setting as Theorem~\ref{thm:expsom_convergence},
and using the Beta-distributed step size, RanSOM-B satisfies
\begin{equation}
\label{eq:betasom_rate}
    \frac{1}{T}\sum_{t=0}^{T-1}\mathbb{E}[\mathcal{G}(x_t)]
    \le
    \mathcal{O}\!\left(\underbrace{
        (\Delta_0\bar\sigma_h)^{\tfrac{A}{2A+K}}\,
        (D\sigma_g)^{\tfrac{K}{2A+K}}\,
        T^{-\tfrac{q(p-1)}{2q(p-1)+p}}
    }_{\text{Main Variance Rate}}
    + \underbrace{D\sqrt{\Delta_0 L_0}\,T^{-1/2}}_{\text{Smoothness Rate}}
    + \underbrace{\Delta_0\,T^{-1}}_{\text{Geometric Limit Rate}}\right),
\end{equation}
where $\bar\sigma_h = D(\sigma_h + L_0)$ is the effective Hessian-noise
constant on the compact set $\mathcal{C}$.
 
For the standard case $p = q = 2$, the main rate recovers the optimal
$\mathcal{O}(T^{-1/3})$ scaling for projection-free methods, with
constant $\mathcal{O}((\Delta_0 \cdot D\sigma_g \cdot \bar\sigma_h)^{1/3})$.
\end{theorem}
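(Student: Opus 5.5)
We follow the same template as Theorem~\ref{thm:expsom_convergence}: combine the constrained descent inequality (Lemma~\ref{lemma:constrained_descent}) with a momentum-error bound analogous to Lemma~\ref{lemma:main_error}, then optimize $\eta$ and $\beta$.

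\textbf{Step 1: telescoping the descent inequality.} We take a constant $\eta_t \equiv \eta$, so $K_t = 1/\eta - 1$. We sum \eqref{eq:constrained_descent} over $t=0,\dots,T-1$, use Assumption~\ref{ass:well_defined} and $C_s \le 2$, and divide by $\eta T$. This gives
\begin{equation*}
\frac{1}{T}\sum_{t=0}^{T-1}\mathbb{E}[\mathcal{G}(x_t)] \le \frac{\Delta_0}{\eta T} + \frac{2D}{T}\sum_{t=0}^{T-1}\mathbb{E}[\|e_t\|_2] + L_0 D^2 \eta .
\end{equation*}

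\textbf{Step 2: the error bound for RanSOM-B.} Next we re-derive Lemma~\ref{lemma:main_error} for the Beta case. Write $e_{t+1} = (1-\beta)e_t + (1-\beta)\big(\delta_{t+1} - (\nabla f(x_{t+1}) - \nabla f(x_t))\big) + \beta(g_{t+1} - \nabla f(x_{t+1}))$. By the Beta identity of Lemma~\ref{lemma:stein}, applied with $g(s) = \nabla f(x_t + s d_t)$ conditionally on $\mathcal{F}_t$ (which fixes $x_t, d_t$), the middle term has zero conditional mean. Integrability holds because $s$ has compact support $[0,1]$ and the Hessian is bounded by $L_0$. The recursion is therefore a sum of martingale differences with geometric weights $(1-\beta)^{t-k}$.

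We bound the two noise terms separately. For the Hessian term, the oracle noise is controlled by Assumption~\ref{ass:noise} with $w = w_t d_t$ and $\|d_t\| \le D$, where $\mathbb{E}[w_t^q] \lesssim \eta^q$ because $w_t = (1-s_t)/K_t \le 1/K_t \approx \eta$. The deterministic part $\|w_t\nabla^2 f(x_{t+1})d_t - (\nabla f(x_{t+1})-\nabla f(x_t))\|_2$ is at most $\kappa L_0 D(w_t + s_t) \lesssim L_0 D\eta$ in $q$-th moment. The $\kappa$ factor is absorbed into the constant; on the compact set it is harmless. The gradient noise term is controlled by $\beta\sigma_g$.

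Applying von Bahr--Esseen to each geometric martingale sum gives $\mathbb{E}\|e_t\|_2 \lesssim (1-\beta)^t \sigma_g B_{\mathrm{init}}^{-(1-1/p)} + \beta^{1-1/p}\sigma_g + \eta\beta^{-1/q}\bar\sigma_h$, with $\bar\sigma_h = D(\sigma_h + L_0)$. Averaging over $t$ turns the first term into the initialization term $\sigma_g/(\beta T B_{\mathrm{init}}^{1-1/p})$.

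\textbf{Step 3: balancing.} Substituting, and taking $B_{\mathrm{init}}$ large enough that the initialization term is negligible, we obtain
\begin{equation*}
\frac{\Delta_0}{\eta T} + D\sigma_g\beta^{A} + D\eta\beta^{-K}\bar\sigma_h + L_0D^2\eta .
\end{equation*}
We choose $\eta$ to balance $\Delta_0/(\eta T)$ against $\eta\beta^{-K} D\bar\sigma_h$, and $\beta$ to balance against $D\sigma_g\beta^A$. Exactly as in Theorem~\ref{thm:expsom_convergence}, this yields the exponents stated there and the main rate $(\Delta_0\bar\sigma_h)^{A/(2A+K)}(D\sigma_g)^{K/(2A+K)}T^{-q(p-1)/(2q(p-1)+p)}$. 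The precise placement of the $D$ factors depends on how $D$ is folded into $\bar\sigma_h$.

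The term $L_0D^2\eta$ balanced against $\Delta_0/(\eta T)$ gives the rate $D\sqrt{\Delta_0L_0}T^{-1/2}$. The constraint $\eta < 1$ (needed for $K_t > 0$) forces $\eta = \min(1/2,\cdot)$, and when this cap binds we get the $\Delta_0/T$ term. Setting $p=q=2$ gives $A=K=1/2$ and the $T^{-1/3}$ rate.

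\textbf{Main obstacle.} The delicate step is Step 2, the error bound: the Hessian weight $w_t$ and the noise $\xi_{t+1}$ must be handled so that Assumption~\ref{ass:noise} applies. That assumption requires $w$ to be independent of $\xi$, which holds because $s_t$ is drawn before $\mathcal{B}_{t+1}$. We also need the bound $\mathbb{E}[w_t^q] \lesssim \eta^q$ for the Beta weights, and we need to check that $K_t$ bounded away from $0$ keeps every constant uniform.
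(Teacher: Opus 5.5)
Your proposal is correct and follows the paper's proof essentially step for step: telescope the Frank--Wolfe descent inequality, unroll $e_{t+1}=(1-\beta)e_t+(1-\beta)\Psi_{t+1}+\beta(g_{t+1}-\nabla f(x_{t+1}))$ with the Beta--Stein identity making $\Psi_{t+1}$ a martingale difference, split its $q$-th moment into oracle noise (using $\mathbb{E}[w_t^q]\lesssim\eta^q$) and the pathwise term $\kappa L_0 D(w_t+s_t)$, apply von Bahr--Esseen with geometric weights, and then optimize $\beta$ followed by $\eta$. Two of your small deviations are actually improvements: capping $\eta$ at $1/2$ is the right choice, since the paper's $\min(\eta_a,\eta_c,1)$ allows the degenerate case $K_t=0$ while its Beta-weight moment bounds assume $\eta_t\le 1/2$; and your hedge about $D$ is warranted, since the balancing actually yields $(\Delta_0 D\bar\sigma_h)^{A/(2A+K)}(D\sigma_g)^{K/(2A+K)}$, exactly as in the paper's appendix corollary, i.e.\ an extra factor $D^{A/(2A+K)}$ relative to the displayed statement.
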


\section{Numerical Experiments}

We evaluate the RanSOM framework on three non-convex tasks: (1) binary classification (SPLICE), (2) sequence classification (MNIST1D), and (3) constrained matrix completion (MovieLens). Results for SPLICE and MovieLens are provided in Appendix \ref{sec:AppExp}. This section focuses on MNIST1D to compare our unconstrained (RanSOM-E) variant against state-of-the-art baselines.

\textbf{MNIST1D Results.} We utilize the MNIST1D benchmark \cite{mnist1d} to test curvature correction in deep, non-convex landscapes. As shown in Figure \ref{fig:mnist1d_results} and Table \ref{tab:mnist1d_results}, \textbf{RanSOM-E (Muon)} outperforms all baselines, achieving a peak accuracy of $\mathbf{91.90\%}$. While STORM exhibits significant instability ($88.83\% \pm 1.38$), RanSOM-E maintains high precision and low variance ($\pm 0.36$), empirically validating the robustness of our randomized bias correction.

\begin{figure}[h]
    \centering    \includegraphics[width=0.85\linewidth]{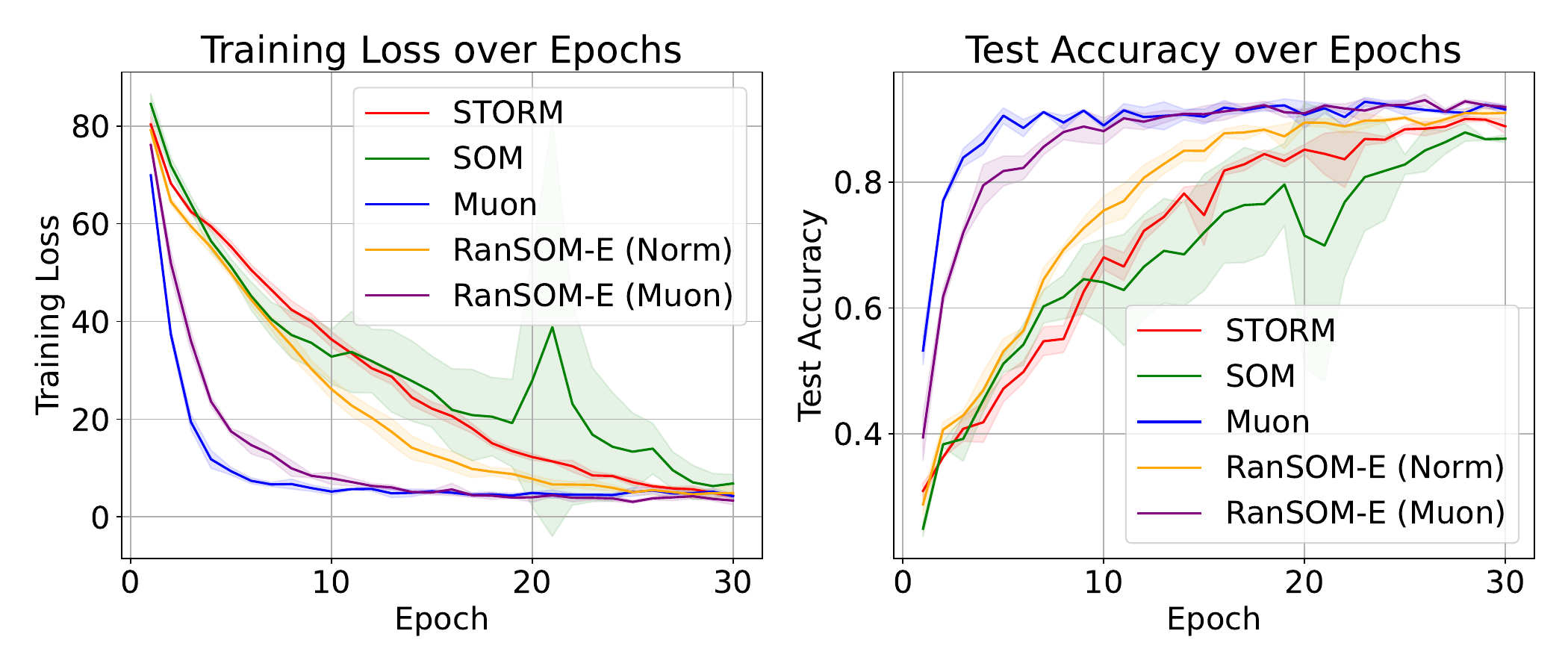}
    \caption{Training Loss and Test Accuracy on MNIST1D. RanSOM-E variants demonstrate superior convergence over first-order and classic second-order baselines.}
    \label{fig:mnist1d_results}
\end{figure}

\begin{table}[h]
    \centering
    \small
    \caption{Test Accuracy on MNIST1D (Mean $\pm$ Std, 3 runs)}
    \label{tab:mnist1d_results}
    \begin{tabular}{lc}
        \toprule
        \textbf{Optimizer} & \textbf{Test Accuracy (\%)} \\
        \midrule
        STORM           & $88.83 \pm 1.38$ \\
        SOM-Classic     & $86.87 \pm 0.75$ \\
        Muon            & $91.50 \pm 0.90$ \\
        RanSOM-E (Norm) & $90.97 \pm 0.84$ \\
        \textbf{RanSOM-E (Muon)} & $\mathbf{91.90 \pm 0.36}$ \\
        \bottomrule
    \end{tabular}
\end{table}
\section{Discussion and Conclusion}
\label{sec:discussion}

Standard momentum methods often suffer from curvature-induced bias in non-convex landscapes. To address this, we introduced \textbf{RanSOM}, a framework that utilizes randomized integration and Stein-type identities to construct an unbiased gradient estimator via a single Hessian-vector product. RanSOM recovers the optimal $\mathcal{O}(T^{-1/3})$ convergence rate under bounded variance and maintains optimality even with heavy-tailed noise. Critically, it resolves two major limitations of prior second-order corrections: it requires no extra assumptions—avoiding the need for individual/average sample smoothness or Lipschitz Hessians—and incurs no extra cost, matching the query complexity of standard variance reduction methods without auxiliary oracle queries.

\textbf{Empirical Validation.} Experiments on SPLICE and MNIST1D confirm that RanSOM-E offers significantly higher stability than STORM, validating our correction under heavy-tailed noise. In constrained settings, RanSOM-B outperformed SFW baselines on MovieLens, suggesting that Beta-distributed steps effectively navigate complex constraint geometries.

\textbf{Limitations and Future Work.} While theoretically efficient, the required HVP incurs a computational cost roughly double that of SGD. Furthermore, tuning the step-size distribution parameters for heterogeneous landscapes remains a challenge. Future research will explore adaptive distributions and applications within reinforcement learning.

\textbf{Conclusion.} RanSOM provides a unified, assumption-light framework for non-convex optimization. By treating step sizes as random variables, it enables robust second-order bias correction without Lipschitz assumptions on the Hessian or the typical computational overhead of auxiliary sampling.

\bibliography{example_paper}
\bibliographystyle{icml2026}

\newpage
\onecolumn
\appendix


\section{Additional Experiments}\label{sec:AppExp}
\subsection{Non-Convex Classification: Splice Dataset}

\textbf{Setup.} We evaluate performance on the Splice dataset from the LibSVM repository \cite{libsvm}, utilizing a Multi-Layer Perceptron (MLP) architecture ($60 \to 32 \to 16 \to 1$). To introduce challenging non-convexity into the loss landscape, we employ Welsch regularization. The objective function is given by:
\[
\mathcal{L}(w) = \text{BCE}(w) + \lambda \sum_{i} \frac{w_i^2}{1 + w_i^2}
\]
where $\lambda=0.1$. We compare RanSOM-E (using both Normalized and Muon-style updates) against SGD with Momentum (SGDm), classic Second-Order Momentum (SOM), Muon, and STORM.

\textbf{Results.} Figure \ref{fig:splice_results} illustrates the training loss and test accuracy trajectories. RanSOM-E (Muon) demonstrates rapid convergence and superior stability compared to the baselines. As detailed in Table \ref{tab:results}, RanSOM-E (Muon) achieves the highest final test accuracy of $\mathbf{84.90\%} \pm 0.0038$, outperforming the closest competitor (Muon) while maintaining significantly lower variance than STORM.

\begin{figure}[h]
    \centering    \includegraphics[width=1\linewidth]{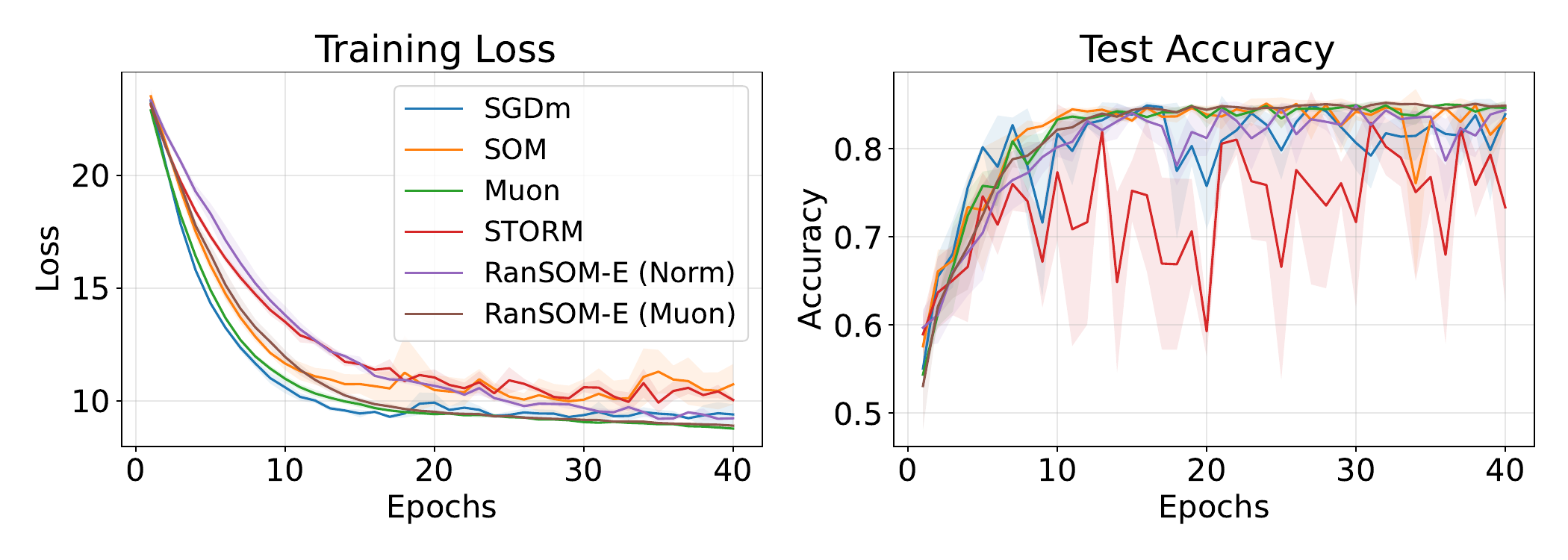}
    \caption{Training Loss (left) and Test Accuracy (right) on the Splice dataset with Welsch regularization. RanSOM-E (Muon) shows the fastest convergence and highest final accuracy.}
    \label{fig:splice_results}
\end{figure}
\vspace*{-\baselineskip}
\begin{table}[h]
    \centering
    \caption{Comparison of Final Test Accuracy on SPLICE dataset}
    \label{tab:results}
    \begin{tabular}{lc}
        \toprule
        \textbf{Optimizer} & \textbf{Test Accuracy (\%)} \\
        \midrule
        SGDm & $0.8392 \pm 0.0061$ \\
        SOM & $0.8342 \pm 0.0118$ \\
        Muon & $0.8466 \pm 0.0079$ \\
        STORM & $0.7335 \pm 0.1077$ \\
        RanSOM-E (Norm) & $0.8441 \pm 0.0052$ \\
        RanSOM-E (Muon) & $\mathbf{0.8490 \pm 0.0038}$ \\
        \bottomrule
    \end{tabular}
\end{table}

\subsection{Constrained Matrix Completion: Nano MovieLens}

\textbf{Setup.} To evaluate the constrained variant RanSOM-B, we perform matrix completion on the "Nano" MovieLens dataset (a subset of MovieLens 100K \cite{movielens} consisting of the top 100 users and 200 movies). The problem is formulated as minimizing the reconstruction error (RMSE) under a Nuclear Norm ball constraint, a classic non-convex constrained problem. We compare RanSOM-B against Stochastic Frank-Wolfe with Polyak Momentum (SFW-Polyak) and Stochastic Frank-Wolfe with Classic SOM (SFW-SOM).

\textbf{Results.} Figure \ref{fig:movielens_results} displays the RMSE trajectories over 50 epochs.
\begin{itemize}[topsep=0pt]
    \item \textbf{Performance:} RanSOM-B achieves the lowest final average RMSE, demonstrating that the randomized Beta-distributed steps effectively navigate the constrained geometry better than deterministic step sizes.
    \item \textbf{Stability:} While RanSOM-B exhibits marginally higher variance in the earliest epochs (attributed to the exploration inherent in the randomized step size), it quickly stabilizes. In contrast, SFW-Polyak and SFW-SOM are highly stable but converge to suboptimal local minima with slightly higher final RMSE.
\end{itemize}
These results confirm that RanSOM-B successfully extends the benefits of randomized second-order corrections to projection-free optimization settings.
\vspace*{-\baselineskip}
\begin{figure}[h]
    \centering    \includegraphics[width=0.7\linewidth]{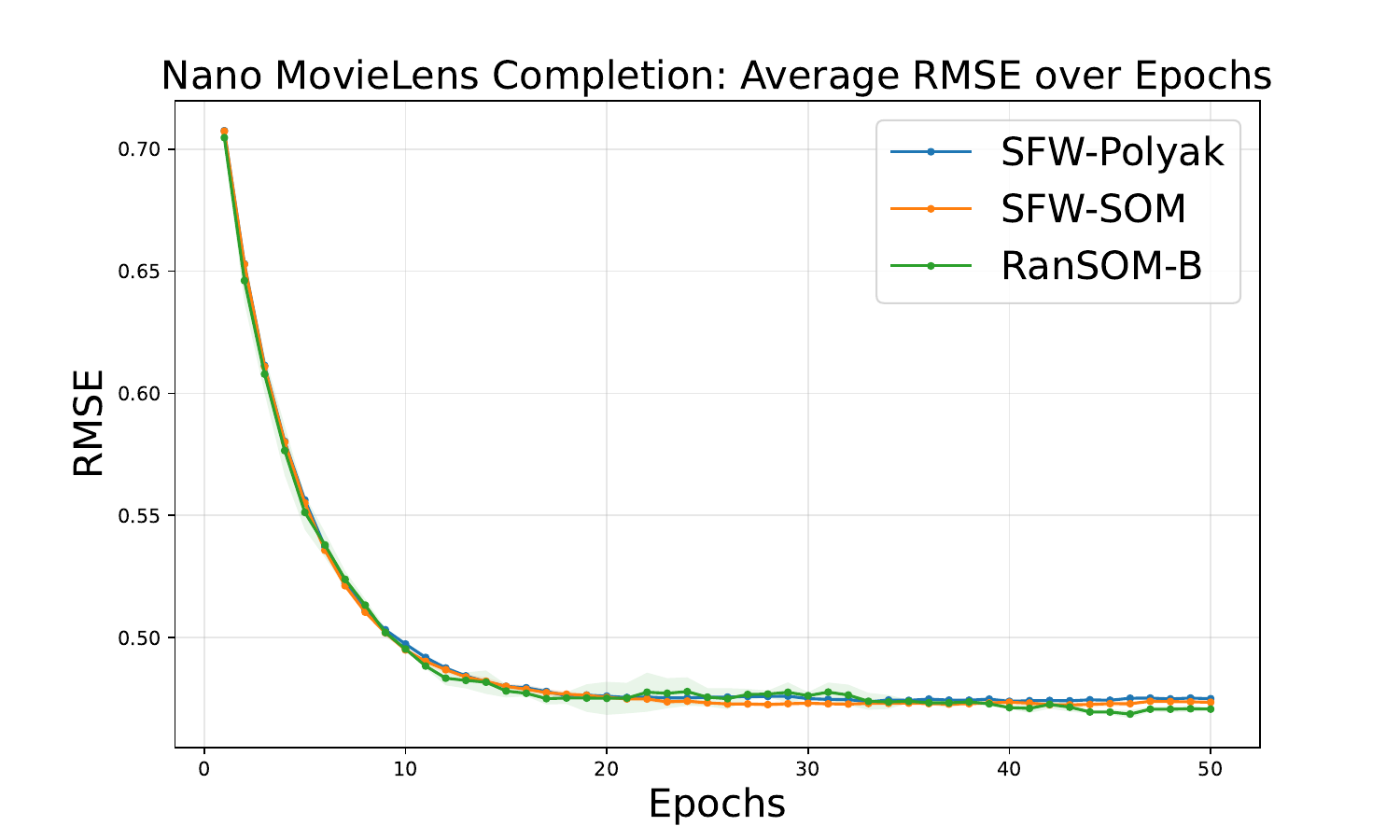}
    \caption{Average RMSE on Nano MovieLens Matrix Completion. RanSOM-B (green) converges to a lower final error than SFW-Polyak and SFW-SOM.}
    \label{fig:movielens_results}
\end{figure}

\subsection{Additional Experimental Details}
\label{appendix:experiment_setups}

To ensure reproducibility, we provide the specific architectural and hyperparameter configurations for the three benchmarks evaluated in this work. For all experiments, we conducted a grid search over learning rates for each baseline to ensure a fair comparison. Momentum parameters were fixed to $\beta=0.1$ (equivalent to a momentum parameter $0.9$ which is the default) for the three experiments. An additional sweep over $\beta$ might improve the results further.

\textbf{Binary Classification (SPLICE).} 
We use the SPLICE dataset \cite{libsvm} (1,000 train / 2,175 test samples) with 60 features. We optimize a binary cross-entropy loss with a non-convex Welsch regularizer: $\mathcal{L}(\theta) = \text{BCE}(\hat{y}, y) + \lambda \sum_{j} \theta_j^2 / (1 + \theta_j^2)$, where $\lambda = 0.05$. The model is an MLP with two hidden layers of sizes 32 and 16 using ReLU activations. Training was performed for 40 epochs with a batch size of 64. Final learning rates: SGDm (0.01), STORM (0.1), SOM-Classic (0.01), Muon (0.005), RanSOM-E Norm (0.01), and RanSOM-E Muon (0.004).

\textbf{Sequence Classification (MNIST1D).} 
This task uses the MNIST1D benchmark \cite{mnist1d}, treating 40-point sequences as 1D signals. The architecture is a ConvNet with two 1D convolutional layers (16 and 32 filters, kernel size 3, padding 1) followed by ReLU and a linear layer mapping 1,280 features to 10 classes. Training lasted 30 epochs with a batch size of 100. Learning rates were set to: STORM (0.2), SOM-Classic (0.2), Muon (0.08), RanSOM-E Norm (0.05), and RanSOM-E Muon (0.04). Experiments were averaged over three runs (seeds 42, 43, 44).

\textbf{Matrix Completion (MovieLens 100K).} 
We evaluate RanSOM-B on a $100 \times 200$ sub-matrix of the MovieLens 100K dataset \cite{movielens}. The objective is to minimize the MSE over observed entries subject to a nuclear norm constraint $\|X\|_* \leq 50$. We use a Linear Minimization Oracle (LMO) via SVD to compute Frank-Wolfe directions. All variants used a batch size of 256 and a learning rate of 0.005. RanSOM-B specific steps were sampled from a $\text{Beta}(1, (1/\text{lr})-1)$ distribution. Performance is reported as the average RMSE over three runs.

\textbf{Computational Note.} For all SOM and RanSOM variants, Hessian-vector products (HVPs) were implemented using PyTorch’s autograd engine with \texttt{create\_graph=True} to facilitate second-order integration.

\section{Theoretical Analysis and Proofs}
\label{app:theory}

This appendix presents the full convergence analysis of RanSOM in three stages. First, we restate the assumptions and derive the Stein-type identities that make the RanSOM correction unbiased. Second, we prove the key technical lemma bounding the moment of the correction error. Third, we use this bound to establish the convergence rates stated in the main text, for both the unconstrained (RanSOM-E) and constrained (RanSOM-B) settings. Throughout, we work under the pointwise $(L_0, L_1)$-smoothness condition of Assumption~\ref{ass:geometric_smoothness} used in the main text.

\subsection{Notation and Setting}
\label{app:notation}

Recall the notation from Section~\ref{sec:theory}. The space $\mathcal{E}
= \mathbb{R}^d$ is equipped with a primal norm $\|\cdot\|$, dual norm
$\|\cdot\|_*$, and Euclidean norm $\|\cdot\|_2$. Since $\mathcal{E}$ is
finite--dimensional, all norms are equivalent; Assumption~\ref{ass:app:kappa}
below fixes the relevant compatibility constant $\kappa \geq 1$
satisfying $\|u\|_* \leq \kappa \|u\|_2$ and $\|u\|_2 \leq \kappa \|u\|_*$
for all $u \in \mathcal{E}^* \cup \mathcal{E}$. (A single symmetric
constant can always be chosen by taking the maximum of the two
one--sided constants.)

The filtration $\{\mathcal{F}_t\}_{t \geq 0}$ records the history of all
random variables up to and including step $t$: the iterate $x_t$, the
momentum $m_t$, and all previously drawn step sizes, Stein weights, and
batches. In particular $x_t$ and $m_t$ are $\mathcal{F}_t$--measurable.
At step $t$, the step size $s_t$, Stein weight $w_t = w(s_t)$, batch
$\mathcal{B}_{t+1}$, and stochastic oracles $g_{t+1}, h_{t+1}$ are drawn
conditionally on $\mathcal{F}_t$; the step--$t$ randomness $(s_t, w_t)$
is independent of $\mathcal{F}_t$, and the oracle noise at step $t+1$ is
conditionally independent of $(s_t, w_t)$ given $x_{t+1}$.

\subsection{Assumptions}
\label{app:assumptions}

The assumptions below match those stated in the main text
(Assumptions~\ref{ass:well_defined}--\ref{ass:noise}); we restate them
here for self--containedness.

\begin{assumption}[Well--Defined Problem]
\label{ass:app:wd}
$f : \mathcal{E} \to \mathbb{R}$ is bounded below by $f_* > -\infty$; let
$\Delta_0 \triangleq f(x_0) - f_*$.
\end{assumption}

\begin{assumption}[Pointwise $(L_0, L_1)$--Smoothness]
\label{ass:app:smooth}
$f$ is twice continuously differentiable and there exist constants
$L_0, L_1 \geq 0$ such that
\begin{equation}
  \|\nabla^2 f(x)\|_{\mathrm{op}}
  \;\leq\; L_0 + L_1 \|\nabla f(x)\|_*
  \qquad \text{for every } x \in \mathcal{E}.
  \label{eq:app:L0L1}
\end{equation}
\end{assumption}

\begin{assumption}[Norm Compatibility]
\label{ass:app:kappa}
There exists $\kappa \geq 1$ such that $\|u\|_* \leq \kappa \|u\|_2$ and
$\|u\|_2 \leq \kappa \|u\|_*$ for every $u \in \mathcal{E}^* \cup
\mathcal{E}$.
\end{assumption}

\begin{assumption}[Affine Heavy--Tailed Noise]
\label{ass:app:noise}
The stochastic gradient $g(x) = \nabla f_\xi(x)$ and the Hessian--vector
product oracle $H(x)w = \nabla^2 f_\xi(x) w$ are unbiased, and there
exist constants $\sigma_g, \alpha_g, \sigma_h, \alpha_h \geq 0$ and
exponents $p, q \in (1, 2]$ such that, for every $x \in \mathcal{E}$ and
every deterministic $w \in \mathcal{E}$ independent of the oracle noise
$\xi$,
\begin{align}
  \mathbb{E}_\xi\bigl[\|g(x) - \nabla f(x)\|_2^p \,\big|\, x\bigr]
    &\leq \sigma_g^p + \alpha_g^p \|\nabla f(x)\|_*^p,
  \label{eq:app:noise-g} \\
  \mathbb{E}_\xi\bigl[\|(H(x) - \nabla^2 f(x))w\|_2^q \,\big|\, x, w\bigr]
    &\leq \bigl(\sigma_h^q + \alpha_h^q \|\nabla f(x)\|_*^q\bigr) \|w\|^q.
  \label{eq:app:noise-h}
\end{align}
\end{assumption}

\smallskip\noindent\textbf{Stepsize regime (unconstrained only).}
For the unconstrained algorithm RanSOM--E, we impose the stepsize
condition
\begin{equation}
  q L_1 \rho\, \eta_t \;\leq\; \tfrac12,
  \label{eq:app:stepsize-mgf}
\end{equation}
which ensures the moment generating function of $s_t \sim
\mathrm{Exp}(1/\eta_t)$ is finite at rate $qL_1\rho$. No such condition
is needed for RanSOM--B, whose step $s_t \in [0,1]$ is bounded.

\subsection{Stein--Type Identities}
\label{app:stein}

We begin by deriving the two Stein identities used to construct the
unbiased Hessian--vector--product correction. Both follow from a single
integration--by--parts formula.

\begin{lemma}[Master identity]
\label{lem:app:master}
Let $g : \mathbb{R} \to \mathbb{R}^d$ be continuously differentiable with
integrable derivative, and let $s$ be a non--negative random variable
with density $f_s$ and survival function $\bar F_s(z) = \mathbb{P}(s >
z)$. Then
\begin{equation}
  \mathbb{E}[g(s) - g(0)]
  \;=\; \int_0^\infty g'(z)\, \bar F_s(z)\, dz.
  \label{eq:app:master}
\end{equation}
\end{lemma}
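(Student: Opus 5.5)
Write $g(s) - g(0) = \int_0^s g'(z)\,dz = \int_0^\infty g'(z)\,\mathbf{1}\{z < s\}\,dz$ by the fundamental theorem of calculus, which applies because $g$ is $C^1$. Taking expectations and exchanging expectation and integral gives
\[
\mathbb{E}[g(s)-g(0)] = \int_0^\infty g'(z)\,\mathbb{E}[\mathbf{1}\{s>z\}]\,dz = \int_0^\infty g'(z)\,\bar F_s(z)\,dz,
\]
which is exactly \eqref{eq:app:master}. The strict versus non-strict inequality in the indicator is harmless, since $s$ has a density and so $\mathbb{P}(s=z)=0$.

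The only step needing care is justifying the exchange of expectation and integral. I would apply Fubini--Tonelli to the product measure $dz \otimes \mathbb{P}$, coordinatewise in $\mathbb{R}^d$ or directly for Bochner integrals. The integrand $(z,\omega)\mapsto g'(z)\mathbf{1}\{z<s(\omega)\}$ is jointly measurable, and it satisfies
\[
\int_0^\infty \mathbb{E}\bigl[\|g'(z)\|_2\mathbf{1}\{z<s\}\bigr]\,dz = \int_0^\infty \|g'(z)\|_2\,\bar F_s(z)\,dz \le \int_0^\infty \|g'(z)\|_2\,dz < \infty,
\]
using $\bar F_s\le 1$ and the integrable-derivative hypothesis. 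Tonelli then gives absolute integrability, and Fubini permits the swap. As a by-product, $\mathbb{E}\|g(s)-g(0)\|_2<\infty$, because $\|g(s)-g(0)\|_2 \le \int_0^\infty\|g'\|_2$, so the left-hand side is well defined.

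This absolute-integrability check is the main (mild) obstacle; everything else is bookkeeping. If one wants the identity under weaker hypotheses, the weaker condition $\int_0^\infty\|g'(z)\|\bar F_s(z)\,dz<\infty$ already suffices. That is the form relevant to the Exponential case under $(L_0,L_1)$-smoothness, and it could be noted in a remark.

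The two identities of Lemma~\ref{lemma:stein} then follow as corollaries. For $s\sim\mathrm{Exp}(\lambda)$ we have $\bar F_s = \lambda^{-1} f_s$, so $\mathbb{E}[g(s)-g(0)] = \lambda^{-1}\int g' f_s = \lambda^{-1}\mathbb{E}[g'(s)]$. For $s\sim\mathrm{Beta}(1,K)$ we have $f_s(z)=K(1-z)^{K-1}$ and $\bar F_s(z) = (1-z)^K = \tfrac{1-z}{K} f_s(z)$ on $[0,1]$, which gives $\mathbb{E}[\tfrac{1-s}{K}g'(s)]$.
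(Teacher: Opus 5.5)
Your proof is correct and follows the paper's argument: write $g(s)-g(0)=\int_0^\infty g'(z)\mathbf{1}\{z<s\}\,dz$, exchange expectation and integral by Fubini using $\bar F_s\le 1$ and integrability of $g'$, and use the density to ignore $\mathbb{P}(s=z)$. Your explicit Tonelli bound is more careful than the paper's one-line justification, and it is worth recording that $\int_0^\infty \|g'(z)\|_2\,\bar F_s(z)\,dz<\infty$ already suffices, since in the exponential application $g'(z)=\nabla^2 f(x_t+zd_t)\,d_t$ need not be integrable on $[0,\infty)$.
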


\begin{proof}
By the fundamental theorem of calculus,
$g(s) - g(0) = \int_0^s g'(z)\, dz$. Writing this as an indicator
integral and taking expectation,
\begin{equation*}
  \mathbb{E}[g(s) - g(0)]
  = \mathbb{E}\!\left[\int_0^\infty g'(z)\, \mathbf{1}_{\{z \leq s\}}\, dz\right]
  = \int_0^\infty g'(z)\, \mathbb{P}(s \geq z)\, dz,
\end{equation*}
where the interchange of expectation and integral is justified by
Fubini's theorem (integrability of $g'$ and the fact that
$\mathbb{P}(s \geq z) \in [0,1]$). Since $\mathbb{P}(s = z) = 0$ for a
continuous distribution, $\mathbb{P}(s \geq z) = \bar F_s(z)$. 
\end{proof}

\begin{lemma}[Stein--type identities]
\label{lem:app:stein}
Let $g : \mathbb{R} \to \mathbb{R}^d$ be as in
Lemma~\ref{lem:app:master}.
\begin{enumerate}[topsep=2pt,itemsep=0pt]
  \item \emph{(Exponential.)} If $s \sim \mathrm{Exp}(\lambda)$ with
  $\lambda > 0$, then
  \begin{equation}
    \mathbb{E}[g(s) - g(0)]
    \;=\; \tfrac{1}{\lambda}\, \mathbb{E}[g'(s)].
    \label{eq:app:stein-exp}
  \end{equation}
  \item \emph{(Beta.)} If $s \sim \mathrm{Beta}(1, K)$ with $K > 0$,
  then
  \begin{equation}
    \mathbb{E}[g(s) - g(0)]
    \;=\; \mathbb{E}\!\left[\tfrac{1 - s}{K}\, g'(s)\right].
    \label{eq:app:stein-beta}
  \end{equation}
\end{enumerate}
\end{lemma}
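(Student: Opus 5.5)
The plan is to derive both identities directly from the master identity, Lemma~\ref{lem:app:master}, which gives $\mathbb{E}[g(s)-g(0)] = \int_0^\infty g'(z)\,\bar F_s(z)\,dz$. In each case I would compute the survival function $\bar F_s$ and rewrite it as a deterministic function of $z$ times the density $f_s(z)$. Reading the integral back as an expectation then gives the claim.

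\emph{Exponential case.} For $s \sim \mathrm{Exp}(\lambda)$ we have $f_s(z) = \lambda e^{-\lambda z}$ and $\bar F_s(z) = e^{-\lambda z}$ for $z \ge 0$. Hence $\bar F_s(z) = \tfrac{1}{\lambda} f_s(z)$, and the master identity becomes
\[
\mathbb{E}[g(s)-g(0)] = \tfrac{1}{\lambda}\int_0^\infty g'(z) f_s(z)\,dz = \tfrac{1}{\lambda}\,\mathbb{E}[g'(s)],
\]
which is \eqref{eq:app:stein-exp}. For the last expectation to be well defined we need $\mathbb{E}\|g'(s)\|_2<\infty$. This follows from the integrability hypothesis because $f_s \le \lambda$ is bounded: $\mathbb{E}\|g'(s)\| \le \lambda \int_0^\infty \|g'\|$.

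\emph{Beta case.} For $s \sim \mathrm{Beta}(1,K)$ the density is $f_s(z) = K(1-z)^{K-1}$ on $[0,1]$, since $B(1,K) = 1/K$. The survival function is $\bar F_s(z) = (1-z)^K$ on $[0,1]$ and $0$ for $z>1$. Therefore $\bar F_s(z) = \tfrac{1-z}{K}\, f_s(z)$ on $[0,1]$. Because $\bar F_s$ vanishes beyond $1$, the master integral truncates to $\int_0^1$, and
\[
\mathbb{E}[g(s)-g(0)] = \int_0^1 g'(z)\,\tfrac{1-z}{K} f_s(z)\,dz = \mathbb{E}\!\left[\tfrac{1-s}{K}\,g'(s)\right],
\]
which is \eqref{eq:app:stein-beta}. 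Integrability is not a concern here. The weight $\tfrac{1-z}{K} f_s(z) = (1-z)^K$ is bounded by $1$, and $g'$ is continuous on the compact interval $[0,1]$, so everything is finite. This holds even when $K<1$, where $f_s$ blows up at $z=1$ but the product with $1-z$ stays bounded.

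There is no real obstacle beyond this bookkeeping. The one point needing care is the justification of Fubini inside the master identity, which is already handled by Lemma~\ref{lem:app:master}. The other is confirming that the right-hand expectations are finite, which in both cases reduces to boundedness of $\bar F_s$ as shown above.
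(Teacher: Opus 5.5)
Your proposal is correct and follows the paper's proof essentially verbatim: you compute the survival function, write it as $\tfrac{1}{\lambda}f_s(z)$ (exponential case) or $\tfrac{1-z}{K}f_s(z)$ (Beta case), and substitute into the master identity of Lemma~\ref{lem:app:master}. Your extra checks that the right-hand expectations are finite ($f_s\le\lambda$ in the exponential case; $(1-z)^K\le 1$ together with continuity of $g'$ on $[0,1]$ in the Beta case) are a small but welcome addition that the paper leaves implicit.
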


\begin{proof}
\emph{Exponential.} The density is $f_s(z) = \lambda e^{-\lambda z}$ for
$z \geq 0$; integrating gives the survival function $\bar F_s(z) =
e^{-\lambda z} = \tfrac{1}{\lambda} f_s(z)$. Substituting into
\eqref{eq:app:master},
\[
  \mathbb{E}[g(s) - g(0)]
  = \int_0^\infty g'(z) \cdot \tfrac{1}{\lambda} f_s(z)\, dz
  = \tfrac{1}{\lambda}\, \mathbb{E}[g'(s)].
\]

\emph{Beta.} The density on $[0,1]$ is $f_s(z) = K(1-z)^{K-1}$ (since
$\mathrm{B}(1,K) = 1/K$). Integrating, $\bar F_s(z) = (1-z)^K =
\tfrac{1-z}{K} f_s(z)$ for $z \in [0,1]$. Substituting,
\[
  \mathbb{E}[g(s) - g(0)]
  = \int_0^1 g'(z)\, \tfrac{1-z}{K} f_s(z)\, dz
  = \mathbb{E}\!\left[\tfrac{1-s}{K}\, g'(s)\right]. \qedhere
\]
\end{proof}

\begin{remark}[Unified Stein weight]
\label{rem:app:stein-weight}
Both identities take the form $\mathbb{E}[g(s) - g(0)] =
\mathbb{E}[w(s)\, g'(s)]$ with Stein weight $w(z) = \bar
F_s(z)/f_s(z)$. For RanSOM--E ($s \sim \mathrm{Exp}(1/\eta_t)$), $w(s) =
\eta_t$ is \emph{deterministic}. For RanSOM--B ($s \sim
\mathrm{Beta}(1, K_t)$ with $K_t = \eta_t^{-1} - 1$), $w(s) = (1-s)/K_t$
is \emph{random} but satisfies $0 \leq w(s) \leq 1/K_t$ a.s. In both
cases $\mathbb{E}[w(s)] = \mathbb{E}[\bar F_s(s)/f_s(s)] = \eta_t$, so
$\delta_{t+1} = w_t \cdot h_{t+1}$ is an unbiased estimator of the
gradient change (cf.~\eqref{eq:app:stein-momentum}).
\end{remark}

\smallskip\noindent\textbf{Application to momentum bias.}
Apply Lemma~\ref{lem:app:stein} with $g(s) = \nabla f(x_t + s d_t)$, so
$g'(s) = \nabla^2 f(x_t + s d_t)\, d_t$. Writing $x_{t+1} = x_t + s_t
d_t$,
\begin{equation}
  \mathbb{E}_{s_t}\bigl[\nabla f(x_{t+1}) - \nabla f(x_t)\bigr]
  \;=\; \mathbb{E}_{s_t}\bigl[w_t\, \nabla^2 f(x_{t+1})\, d_t\bigr].
  \label{eq:app:stein-momentum}
\end{equation}
This is the defining property of the RanSOM correction: the left side is
exactly the momentum bias~(\ref{eq:bias_def}) we need to estimate, and
the right side is $\mathbb{E}[\delta_{t+1}]$ (the expectation of the
Hessian--vector--product correction computed by
Algorithms~\ref{alg:expsom}--\ref{alg:betasom}).

\subsection{Normalized Moment Constants}
\label{app:constants}

The convergence analysis depends on a small number of normalized moment
constants associated with the distributions of $s_t$ and $w_t$. We
define them once here; bounds for the specific distributions of
Algorithms~\ref{alg:expsom}--\ref{alg:betasom} are given at the end of
this subsection.

\begin{definition}[Normalized moments]
\label{def:app:moments}
For a given $q \in (1, 2]$, define
\begin{equation}
  M_w \;\triangleq\; \mathbb{E}\!\left[(|w_t|/\eta_t)^q\right],
  \qquad
  M_{ws} \;\triangleq\; \mathbb{E}\!\left[(|w_t|/\eta_t + s_t/\eta_t)^q\right],
  \qquad
  C_s \;\triangleq\; \mathbb{E}[s_t^2]/\eta_t^2.
  \label{eq:app:moments}
\end{equation}
For the unconstrained setting, let $u \triangleq L_1 \rho \eta_t$
satisfying~\eqref{eq:app:stepsize-mgf}, and define
\begin{equation}
  \widetilde C_{B,q} \;\triangleq\;
    \frac{2^{q-1}\bigl(1 + \Gamma(q+1)\bigr)}{(1 - qu)^{q+1}},
  \qquad
  \widetilde C_{A,q} \;\triangleq\;
    2^{q-1}\!\left[M_{ws} +
      u^q\cdot\frac{2^{q-1}\bigl(\Gamma(q+1) + \Gamma(2q+1)\bigr)}
        {(1 - qu)^{2q+1}}\right].
  \label{eq:app:CAB}
\end{equation}
\end{definition}

\smallskip\noindent\textbf{Bounds for RanSOM--E ($s_t \sim \mathrm{Exp}(1/\eta_t)$, $w_t = \eta_t$).}
Since $w_t$ is deterministic, $M_w = 1$. The ratio $s_t/\eta_t \sim
\mathrm{Exp}(1)$, so $\mathbb{E}[(s_t/\eta_t)^k] = \Gamma(k+1) = k!$ for
all $k > 0$. By $(a+b)^q \leq 2^{q-1}(a^q + b^q)$,
\[
  M_{ws} = \mathbb{E}[(1 + s_t/\eta_t)^q]
  \leq 2^{q-1}\bigl(1 + \Gamma(q+1)\bigr)
  \leq 2\cdot 3 = 6
  \qquad \text{(using } q \leq 2,\ \Gamma(q+1) \leq 2\text{).}
\]
Also $C_s = \mathbb{E}[(s_t/\eta_t)^2] = \Gamma(3) = 2$.

\smallskip\noindent\textbf{Bounds for RanSOM--B ($s_t \sim \mathrm{Beta}(1, K_t)$, $w_t = (1-s_t)/K_t$, $K_t = \eta_t^{-1} - 1$).}
Here $s_t \in [0,1]$ and $w_t \in [0, 1/K_t] = [0, \eta_t/(1-\eta_t)]$,
so both are bounded. Using $\mathbb{E}[s_t] = 1/(1+K_t) = \eta_t$ and
$\mathbb{E}[s_t^2] = 2/[(1+K_t)(2+K_t)]$,
\[
  C_s = \frac{\mathbb{E}[s_t^2]}{\eta_t^2}
  = \frac{2}{(1+K_t)(2+K_t)}\cdot \frac{1}{\eta_t^2}
  = \frac{2}{1 + \eta_t}
  \leq 2.
\]
For the Stein weight, $|w_t|/\eta_t = (1-s_t)/[\eta_t K_t] =
(1-s_t)/(1-\eta_t) \leq 1/(1-\eta_t) \leq 2$ for $\eta_t \leq 1/2$. Thus
\[
  M_w = \mathbb{E}\!\left[\bigl(\tfrac{1-s_t}{1-\eta_t}\bigr)^q\right]
  \leq \frac{1}{(1-\eta_t)^q} \leq 2^q \leq 4,
\]
using $\mathbb{E}[(1-s_t)^q] \leq \mathbb{E}[(1-s_t)] = K_t/(K_t+1) = 1 -
\eta_t \leq 1$. For $M_{ws}$, by $(a+b)^q \leq 2^{q-1}(a^q+b^q)$,
$\mathbb{E}[(|w_t|/\eta_t)^q] + \mathbb{E}[(s_t/\eta_t)^q]$ are both
$O(1)$ for $\eta_t \leq 1/2$, so $M_{ws} \leq O(1)$. In particular
$M_w, M_{ws}, C_s$ are absolute constants independent of $\eta_t$ (for
$\eta_t \leq 1/2$). 

\smallskip\noindent\textbf{Useful observation.}
Under the unconstrained stepsize condition~\eqref{eq:app:stepsize-mgf},
$qu \leq 1/2$, so
\[
  \frac{1}{(1-qu)^{q+1}} \leq 2^{q+1} \leq 8,
  \qquad
  \frac{1}{(1-qu)^{2q+1}} \leq 2^{2q+1} \leq 32,
\]
and both $\widetilde C_{B,q}$ and $\widetilde C_{A,q}$ are bounded by
absolute constants.


\subsection{Hessian--Noise Lemma (Unconstrained)}
\label{app:hessian-noise}

Define the centered bias error at step $t+1$ by
\begin{equation}
  \Psi_{t+1}
  \;\triangleq\;
  w_t\, H_\xi(x_{t+1})\, d_t
    - \bigl(\nabla f(x_{t+1}) - \nabla f(x_t)\bigr),
  \label{eq:app:Psi-def}
\end{equation}
the difference between the RanSOM correction $\delta_{t+1} = w_t
h_{t+1}$ and the true gradient change. By the Stein
identity~\eqref{eq:app:stein-momentum} and unbiasedness of the HVP
oracle, $\mathbb{E}[\Psi_{t+1} \mid \mathcal{F}_t] = 0$, so $\{\Psi_j\}$
is a conditional martingale difference sequence with respect to
$\{\mathcal{F}_{j-1}\}$.

\begin{lemma}[Hessian--Noise Bound, unconstrained]
\label{lem:app:hess-noise}
Consider RanSOM--E with $s_t \sim \mathrm{Exp}(1/\eta_t)$ and $w_t =
\eta_t$ deterministic. Under
Assumptions~\ref{ass:app:smooth}--\ref{ass:app:noise} and the stepsize
condition~\eqref{eq:app:stepsize-mgf},
\begin{equation}
  \mathbb{E}\bigl[\|\Psi_{t+1}\|_2^q \,\big|\, \mathcal{F}_t\bigr]
  \;\leq\;
  C_\delta^q\, \eta_t^q\,
    \bigl(\bar\sigma_h^q + \bar\alpha_h^q\, \|\nabla f(x_t)\|_*^q\bigr),
  \label{eq:app:hess-noise}
\end{equation}
where
\begin{align}
  \bar\sigma_h^q
  &\triangleq \kappa^q \rho^q\,\bigl[\sigma_h^q
    + L_0^q\, \widetilde C_{A,q}\bigr],
  \label{eq:app:sigma-bar} \\
  \bar\alpha_h^q
  &\triangleq \kappa^q \rho^q\,\bigl[\tfrac{2^{q-1}\,\alpha_h^q}{1 - qu}
    + L_1^q\, \widetilde C_{B,q}\bigr],
  \label{eq:app:alpha-bar} \\
  C_\delta^q
  &\triangleq 2^{2(q-1)}\cdot \max\bigl(M_w,\; M_{ws}\cdot 2^{q-1}\bigr).
  \label{eq:app:C-delta}
\end{align}

Moreover, at $L_1 = 0$ and $\alpha_h = 0$ the formula reduces to
$\bar\sigma_h^q \asymp \kappa^q\rho^q(\sigma_h^q + L_0^q)$, matching
$\bar\sigma_h = \rho(\sigma_h + \kappa L_0)$ of
Lemma~\ref{lemma:main_error} up to a numerical constant absorbed into
$C_\delta$.
\end{lemma}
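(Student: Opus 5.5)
We decompose $\Psi_{t+1}$ into a pure oracle-noise part and a Stein-fluctuation part,
\[
  \Psi_{t+1} = \underbrace{\eta_t\bigl(H_\xi(x_{t+1}) - \nabla^2 f(x_{t+1})\bigr)d_t}_{\Psi^{(1)}}
  + \underbrace{\eta_t \nabla^2 f(x_{t+1}) d_t - \bigl(\nabla f(x_{t+1}) - \nabla f(x_t)\bigr)}_{\Psi^{(2)}},
\]
and use $\|a+b\|_2^q \le 2^{q-1}(\|a\|_2^q+\|b\|_2^q)$. For $\Psi^{(1)}$ we condition on $\mathcal{F}_t$ and $s_t$. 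Then $x_{t+1}$ and $d_t$ (with $\|d_t\| = \rho$) are fixed and independent of $\xi$, so Assumption~\ref{ass:app:noise} gives the bound $\eta_t^q\rho^q(\sigma_h^q + \alpha_h^q\|\nabla f(x_{t+1})\|_*^q)$. The $\kappa^q$ factor enters when the $\|\cdot\|_2$ bounds are converted to $\|\cdot\|_*$ or back. For $\Psi^{(2)}$ we write $\nabla f(x_{t+1}) - \nabla f(x_t) = \int_0^{s_t} \nabla^2 f(x_t+zd_t)d_t\,dz$. This gives
\[
  \|\Psi^{(2)}\|_* \le \rho\Bigl(\eta_t\|\nabla^2 f(x_{t+1})\|_{op} + \int_0^{s_t}\|\nabla^2 f(x_t+zd_t)\|_{op}\,dz\Bigr),
\]
and both Hessians are bounded via Assumption~\ref{ass:app:smooth} by $L_0 + L_1\|\nabla f(\cdot)\|_*$ along the segment. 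The $L_0$ part yields $\rho L_0(\eta_t + s_t)$, whose $q$-th moment is $\eta_t^q\rho^q L_0^q M_{ws}$. This is the source of the $M_{ws}$ term in $\widetilde C_{A,q}$.

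The key step, and the main obstacle, is controlling $\|\nabla f(x_t+zd_t)\|_*$ for $z \in [0,s_t]$ in terms of $\|\nabla f(x_t)\|_*$, because $s_t$ is unbounded. We use the Grönwall argument standard for $(L_0,L_1)$-smoothness. Set $\phi(z) = \|\nabla f(x_t+zd_t)\|_*$; then $\phi'(z) \le \rho(L_0 + L_1\phi(z))$. This gives
\[
  \phi(z) \le \phi(0)e^{L_1\rho z} + \tfrac{L_0}{L_1}\bigl(e^{L_1\rho z}-1\bigr) \le \bigl(\phi(0) + L_0\rho z\bigr)e^{L_1\rho z}.
\]
The $L_1$ contributions to $\Psi^{(2)}$, and the $\alpha_h$ term of $\Psi^{(1)}$ evaluated at $x_{t+1}$, therefore become expectations of the form $\mathbb{E}[(s_t/\eta_t)^k e^{qL_1\rho s_t}]$. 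Since $s_t/\eta_t \sim \mathrm{Exp}(1)$, each is an explicit exponential-moment integral. With $u = L_1\rho\eta_t$, one has $\int_0^\infty y^k e^{-(1-qu)y}dy = \Gamma(k+1)/(1-qu)^{k+1}$, finite precisely under condition~\eqref{eq:app:stepsize-mgf}. The terms involving $\phi(0)$ produce $L_1^q\|\nabla f(x_t)\|_*^q$ with factors $(1+\Gamma(q+1))/(1-qu)^{q+1}$, which is $\widetilde C_{B,q}$. The $\alpha_h$ term produces $1/(1-qu)$. The cross terms $L_1^q(L_0\rho z)^q$ produce $u^q L_0^q(\Gamma(q+1)+\Gamma(2q+1))/(1-qu)^{2q+1}$, which is the second summand of $\widetilde C_{A,q}$.

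Collecting terms, we apply Jensen, $(\int_0^{s}\cdot)^q \le s^{q-1}\int_0^s (\cdot)^q$, or bound the integral by $s_t\max_z$. Each piece is then a constant times $\eta_t^q$ times either $\bar\sigma_h^q$ or $\bar\alpha_h^q\|\nabla f(x_t)\|_*^q$. The accumulated $2^{q-1}$ splitting factors and the moments $M_w, M_{ws}$ are absorbed into $C_\delta^q$, which gives \eqref{eq:app:hess-noise}. The reduction at $L_1=\alpha_h=0$ is immediate: $u=0$, $\widetilde C_{A,q}=2^{q-1}M_{ws}\le 12$, so $\bar\sigma_h^q \asymp \kappa^q\rho^q(\sigma_h^q+L_0^q)$.
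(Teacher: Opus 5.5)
Your proof follows the paper's route step for step. It uses the same split of $\Psi_{t+1}$ into the oracle-noise part $\Psi^{(1)}$ and the Stein-fluctuation part $\Psi^{(2)}$, the same Gr\"onwall transfer of $\|\nabla f(x_t+zd_t)\|_*$ back to the $\mathcal{F}_t$-measurable anchor $x_t$, and the same closed-form moments $\mathbb{E}[(s_t/\eta_t)^k e^{qL_1\rho s_t}]=\Gamma(k+1)/(1-qu)^{k+1}$ that produce $\widetilde C_{A,q}$ and $\widetilde C_{B,q}$.

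\textbf{The missing term.} The one piece you drop is in the $\alpha_h$ part of $\Psi^{(1)}$. Raising the Gr\"onwall bound at $z=s_t$ to the $q$-th power gives
$\|\nabla f(x_{t+1})\|_*^q\le 2^{q-1}e^{qL_1\rho s_t}\bigl(\|\nabla f(x_t)\|_*^q+L_0^q\rho^q s_t^q\bigr)$.
You keep only the first summand, which gives your $1/(1-qu)$ factor. The second summand contributes
\[
  \eta_t^q\rho^q\alpha_h^q\cdot 2^{q-1}L_0^q\rho^q\,\mathbb{E}\bigl[s_t^q e^{qL_1\rho s_t}\bigr]
  =\eta_t^q\,\rho^q L_0^q\,(\alpha_h\rho\eta_t)^q\,\frac{2^{q-1}\Gamma(q+1)}{(1-qu)^{q+1}}
\]
to $\mathbb{E}[\|\Psi^{(1)}\|_2^q\mid\mathcal{F}_t]$. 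This term does not involve $\|\nabla f(x_t)\|_*$. Its ratio to $\eta_t^q\rho^qL_0^q\widetilde C_{A,q}$ is of order $(\alpha_h\rho\eta_t)^q$, and nothing in the hypotheses controls that quantity; condition \eqref{eq:app:stepsize-mgf} involves $L_1$ only. So your closing claim, that every piece is an absolute constant times $\eta_t^q\bar\sigma_h^q$ or $\eta_t^q\bar\alpha_h^q\|\nabla f(x_t)\|_*^q$, is false for this piece.

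\textbf{The term cannot be avoided.} The paper does isolate this term and folds it into $\bar\sigma_h^q$ by appealing to $\eta_t\le\bar\eta$. That step really needs $\alpha_h\rho\eta_t=O(1)$; even $\bar\eta=1/(\rho L_0)$ only gives $\alpha_h\rho\eta_t\le\alpha_h/L_0$. No cancellation can remove it. For $q=2$, $\Psi^{(1)}$ is conditionally centred given $(\mathcal{F}_t,s_t)$ while $\Psi^{(2)}$ is a function of $s_t$, so $\mathbb{E}[\|\Psi_{t+1}\|_2^2\mid\mathcal{F}_t]\ge\mathbb{E}[\|\Psi^{(1)}\|_2^2\mid\mathcal{F}_t]$.

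Take $f(x)=\tfrac{L_0}{2}x^2$ on $\mathbb{R}$, $x_t=0$, $L_1=\sigma_h=0$, and HVP noise of magnitude $\alpha_h|f'(x)|$ (capped far from the origin so the gradient-noise bound also holds). Then the left-hand side is essentially $2\alpha_h^2L_0^2\rho^4\eta_t^4$. The right-hand side of \eqref{eq:app:hess-noise} equals $400L_0^2\rho^2\eta_t^2$. The bound therefore fails once $\alpha_h\rho\eta_t>\sqrt{200}$.

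\textbf{How to fix it.} You have two options:
\begin{itemize}
\item Add the hypothesis $\alpha_h\rho\eta_t\le 1$. Since $\widetilde C_{A,q}\ge 2^{q-1}M_{ws}\ge 2^{q-1}\Gamma(q+1)$ and $(1-qu)^{-(q+1)}\le 8$, the extra term is then at most $8\eta_t^q\bar\sigma_h^q$, and it is absorbed by enlarging $C_\delta$ by an absolute factor.
\item Add a term of order $\alpha_h^qL_0^q\rho^{2q}\eta_t^q$ explicitly to $\bar\sigma_h^q$.
\end{itemize}
Your reduction at $L_1=\alpha_h=0$ is unaffected.
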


\begin{remark}[Anchor at $x_t$, not $x_{t+1}$]
\label{rem:app:anchor}
The gradient--dependent term in~\eqref{eq:app:hess-noise} is anchored at
$x_t$ (which is $\mathcal{F}_t$--measurable) rather than $x_{t+1}$. This
is essential for the downstream martingale analysis: the peeling lemma
(Lemma~\ref{lem:app:peeling}) requires the affine--growth factor $Y_j$
to be measurable with respect to the filtration $\mathcal{F}_{j-1}$ at
which the conditional expectation is taken. Anchoring at $x_{t+1}$
would entangle the gradient factor with the fresh randomness of the
step and break conditional independence.
\end{remark}

\begin{proof}[Proof of Lemma~\ref{lem:app:hess-noise}]
We decompose $\Psi_{t+1}$ into a stochastic Hessian--noise term and a
deterministic pathwise approximation term, then bound each separately.

\smallskip\noindent\textbf{Decomposition.}
Writing $x_\tau = x_t + \tau s_t d_t$ for $\tau \in [0, 1]$ (so $x_0 =
x_t$ and $x_1 = x_{t+1}$), the fundamental theorem of calculus gives
\[
  \nabla f(x_{t+1}) - \nabla f(x_t)
  = \int_0^{s_t} \nabla^2 f(x_t + z d_t)\, d_t\, dz
  = s_t \int_0^1 \nabla^2 f(x_\tau)\, d_t\, d\tau.
\]
Substituting into~\eqref{eq:app:Psi-def},
\begin{align}
  \Psi_{t+1}
  &= \underbrace{w_t\bigl[H_\xi(x_{t+1}) - \nabla^2 f(x_{t+1})\bigr] d_t}_{T_1}
  + \underbrace{\Bigl[w_t\, \nabla^2 f(x_{t+1})\, d_t
      - s_t\!\int_0^1 \nabla^2 f(x_\tau)\, d_t\, d\tau\Bigr]}_{T_2}.
  \label{eq:app:Psi-decomp}
\end{align}
By $(a+b)^q \leq 2^{q-1}(a^q + b^q)$ (valid for $a, b \geq 0$ and $q \geq 1$),
\begin{equation}
  \mathbb{E}\bigl[\|\Psi_{t+1}\|_2^q \,\big|\, \mathcal{F}_t\bigr]
  \;\leq\; 2^{q-1}\Bigl(
    \mathbb{E}[\|T_1\|_2^q \mid \mathcal{F}_t]
    + \mathbb{E}[\|T_2\|_2^q \mid \mathcal{F}_t]\Bigr).
  \label{eq:app:Psi-split}
\end{equation}

\smallskip\noindent\textbf{Step 1 (Gradient transfer via Gr\"onwall).}
We first establish a pathwise bound that transfers the gradient norm at
any point $x_\tau$ on the interpolation segment back to the
$\mathcal{F}_t$--measurable anchor $x_t$, using pointwise
$(L_0, L_1)$--smoothness (Assumption~\ref{ass:app:smooth}).

Let $\varphi(\tau) \triangleq \|\nabla f(x_\tau)\|_*$. By the chain rule
and the definition of the induced operator norm $\|\cdot\|_{\mathrm{op}}$,
\[
  \varphi'(\tau)
  = \bigl\langle \tfrac{d}{d\tau}\nabla f(x_\tau),\,
      \mathrm{sign}(\nabla f(x_\tau))\bigr\rangle
  \leq \|\nabla^2 f(x_\tau)\,(s_t d_t)\|_*
  \leq \|\nabla^2 f(x_\tau)\|_{\mathrm{op}}\, s_t \|d_t\|.
\]
(The first inequality is a standard consequence of the fact that
$\varphi$ is $\|\cdot\|_*$--Lipschitz along the path; if one prefers,
$|\varphi(\tau_2) - \varphi(\tau_1)| \leq \|\nabla f(x_{\tau_2}) -
\nabla f(x_{\tau_1})\|_*$ which is $\leq \int$ of the Hessian.) Using
$\|d_t\| \leq \rho$ and Assumption~\ref{ass:app:smooth} pointwise at
$x_\tau$,
\[
  \varphi'(\tau) \leq \rho s_t\, \bigl(L_0 + L_1 \varphi(\tau)\bigr).
\]
This is a linear differential inequality. By Gr\"onwall's lemma (e.g.
multiply both sides by $e^{-L_1\rho s_t \tau}$ and integrate), for all
$\tau \in [0, 1]$,
\begin{equation}
  \varphi(\tau)
  \leq e^{L_1 \rho s_t \tau}\bigl(\varphi(0) + L_0 \rho s_t \tau\bigr)
  \leq e^{L_1 \rho s_t}\bigl(\|\nabla f(x_t)\|_* + L_0 \rho s_t\bigr).
  \label{eq:app:gronwall}
\end{equation}
In particular, at $\tau = 1$,
\begin{equation}
  \|\nabla f(x_{t+1})\|_*
  \leq e^{L_1 \rho s_t}\bigl(\|\nabla f(x_t)\|_* + L_0 \rho s_t\bigr).
  \label{eq:app:gronwall-endpoint}
\end{equation}

\smallskip\noindent\textbf{Step 2 (Bounding $\mathbb{E}[\|T_1\|_2^q \mid \mathcal{F}_t]$).}
The Hessian noise is $T_1 = w_t [H_\xi(x_{t+1}) - \nabla^2 f(x_{t+1})]
d_t$. Condition on everything except the oracle noise:
Assumption~\ref{ass:app:noise} bounds the $\|\cdot\|_2^q$ moment
directly (no norm compatibility needed), with $\|d_t\| \leq \rho$:
\begin{equation}
  \mathbb{E}_\xi\bigl[\|T_1\|_2^q \,\big|\, x_{t+1}, s_t, w_t\bigr]
  \;\leq\; |w_t|^q\, \rho^q\,
    \bigl(\sigma_h^q + \alpha_h^q \|\nabla f(x_{t+1})\|_*^q\bigr).
  \label{eq:app:T1-cond}
\end{equation}
Raise~\eqref{eq:app:gronwall-endpoint} to the $q$-th power and apply
$(a+b)^q \leq 2^{q-1}(a^q + b^q)$:
\begin{equation}
  \|\nabla f(x_{t+1})\|_*^q
  \leq 2^{q-1}\, e^{q L_1 \rho s_t}\,
    \bigl(\|\nabla f(x_t)\|_*^q + L_0^q \rho^q s_t^q\bigr).
  \label{eq:app:grad-transfer-q}
\end{equation}
For RanSOM--E, $w_t = \eta_t$ is deterministic, so $|w_t|^q = \eta_t^q$.
Taking $\mathbb{E}[\,\cdot \mid \mathcal{F}_t]$ of~\eqref{eq:app:T1-cond}
and substituting~\eqref{eq:app:grad-transfer-q}:
\begin{align}
  \mathbb{E}[\|T_1\|_2^q \mid \mathcal{F}_t]
  &\leq \eta_t^q\rho^q\,\sigma_h^q
    + 2^{q-1}\eta_t^q \rho^q\alpha_h^q\,
      \mathbb{E}\!\left[e^{qL_1\rho s_t}\right] \|\nabla f(x_t)\|_*^q
  \notag \\
  &\quad + 2^{q-1}\eta_t^q \rho^q\alpha_h^q L_0^q \rho^q\,
      \mathbb{E}\!\left[s_t^q e^{qL_1\rho s_t}\right],
  \label{eq:app:T1-expanded}
\end{align}
where we used that $\|\nabla f(x_t)\|_*$ is $\mathcal{F}_t$--measurable.

The two exponential--type moments admit closed forms. For $s_t \sim
\mathrm{Exp}(1/\eta_t)$ and any $\lambda < 1/\eta_t$,
\begin{equation}
  \mathbb{E}\!\left[s_t^k e^{\lambda s_t}\right]
  = \frac{1}{\eta_t}\int_0^\infty z^k e^{-z(1/\eta_t - \lambda)}\,dz
  = \frac{\Gamma(k+1)\,\eta_t^k}{(1 - \lambda \eta_t)^{k+1}}.
  \label{eq:app:exp-moment}
\end{equation}
Setting $\lambda = qL_1\rho$ and $u = L_1\rho\eta_t$ (so $\lambda\eta_t =
qu \leq 1/2$ by~\eqref{eq:app:stepsize-mgf}):
\begin{equation}
  \mathbb{E}[e^{qL_1\rho s_t}] = \frac{1}{1 - qu}, \qquad
  \mathbb{E}[s_t^q e^{qL_1\rho s_t}]
    = \frac{\Gamma(q+1)\, \eta_t^q}{(1 - qu)^{q+1}}.
  \label{eq:app:exp-mgf}
\end{equation}
Substituting into~\eqref{eq:app:T1-expanded},
\begin{equation}
  \mathbb{E}[\|T_1\|_2^q \mid \mathcal{F}_t]
  \leq \eta_t^q \rho^q \!\left[\sigma_h^q
    + \frac{2^{q-1}\alpha_h^q}{1 - qu}\, \|\nabla f(x_t)\|_*^q
    + \alpha_h^q L_0^q \rho^q \eta_t^q\,
      \frac{2^{q-1}\Gamma(q+1)}{(1-qu)^{q+1}}\right].
  \label{eq:app:T1-final}
\end{equation}

\smallskip\noindent\textbf{Step 3 (Pathwise bound for $T_2$).}
By definition,
\[
  T_2 = w_t\, \nabla^2 f(x_{t+1})\, d_t
    - s_t \int_0^1 \nabla^2 f(x_\tau)\, d_t\, d\tau.
\]
Applying the triangle inequality and the induced--operator--norm
inequality $\|\nabla^2 f(x) u\|_* \leq \|\nabla^2 f(x)\|_{\mathrm{op}}
\|u\|$, then converting $\|\cdot\|_*$ to $\|\cdot\|_2$ via
Assumption~\ref{ass:app:kappa}:
\begin{equation}
  \|T_2\|_2 \leq \kappa\, \|T_2\|_*
  \leq \kappa\rho\Bigl(|w_t|\cdot \|\nabla^2 f(x_{t+1})\|_{\mathrm{op}}
    + s_t \int_0^1 \|\nabla^2 f(x_\tau)\|_{\mathrm{op}}\, d\tau\Bigr).
  \label{eq:app:T2-norm}
\end{equation}
Apply Assumption~\ref{ass:app:smooth} pointwise at each $x_\tau$ and
$x_{t+1}$, then use~\eqref{eq:app:gronwall}:
\[
  \|\nabla^2 f(x_\tau)\|_{\mathrm{op}}
  \leq L_0 + L_1 \varphi(\tau)
  \leq L_0 + L_1 e^{L_1\rho s_t}\bigl(\|\nabla f(x_t)\|_* + L_0\rho s_t\bigr),
\]
uniform in $\tau \in [0,1]$; the same bound holds at $\tau = 1$ for
$\|\nabla^2 f(x_{t+1})\|_{\mathrm{op}}$. Substituting
into~\eqref{eq:app:T2-norm},
\begin{equation}
  \|T_2\|_2
  \leq \kappa\rho\, (|w_t| + s_t)\,\Bigl[L_0
    + L_1 e^{L_1\rho s_t}\bigl(\|\nabla f(x_t)\|_* + L_0 \rho s_t\bigr)\Bigr].
  \label{eq:app:T2-pathwise}
\end{equation}

\smallskip\noindent\textbf{Step 4 (Splitting $T_2$ into baseline and gradient--linear parts).}
Distribute the factor $(|w_t| + s_t)$ over the bracket in
\eqref{eq:app:T2-pathwise} and collect by powers of $\|\nabla
f(x_t)\|_*$:
\[
  \|T_2\|_2 \leq \kappa\rho\,\bigl(\widetilde A
    + \widetilde B\, \|\nabla f(x_t)\|_*\bigr),
\]
where
\begin{align}
  \widetilde A &:= L_0\, (|w_t| + s_t)\,
    \bigl(1 + L_1\rho s_t e^{L_1\rho s_t}\bigr),
  \label{eq:app:A-tilde} \\
  \widetilde B &:= L_1\, (|w_t| + s_t)\, e^{L_1\rho s_t}.
  \label{eq:app:B-tilde}
\end{align}
Since $(s_t, w_t)$ are independent of $\mathcal{F}_t$ and $\|\nabla
f(x_t)\|_*$ is $\mathcal{F}_t$--measurable, $(a+b)^q \leq 2^{q-1}(a^q +
b^q)$ gives
\begin{equation}
  \mathbb{E}[\|T_2\|_2^q \mid \mathcal{F}_t]
  \leq \kappa^q \rho^q\, 2^{q-1}
    \bigl(\mathbb{E}[\widetilde A^q]
      + \mathbb{E}[\widetilde B^q]\, \|\nabla f(x_t)\|_*^q\bigr).
  \label{eq:app:T2-cond}
\end{equation}

\smallskip\noindent\textbf{Step 5 (Moments of $\widetilde B$).}
By $(a+b)^q \leq 2^{q-1}(a^q + b^q)$,
\begin{align*}
  \mathbb{E}[\widetilde B^q]
  &= L_1^q\, \mathbb{E}\!\left[(|w_t|+s_t)^q e^{qL_1\rho s_t}\right]
  \\
  &\leq L_1^q\, 2^{q-1}\!\left(|\eta_t|^q\, \mathbb{E}[e^{qL_1\rho s_t}]
      + \mathbb{E}[s_t^q e^{qL_1\rho s_t}]\right)
  \qquad (\text{using } w_t = \eta_t).
\end{align*}
Substituting~\eqref{eq:app:exp-mgf},
\[
  \mathbb{E}[\widetilde B^q]
  \leq L_1^q\, 2^{q-1}\!\left[\frac{\eta_t^q}{1-qu}
    + \frac{\Gamma(q+1)\, \eta_t^q}{(1-qu)^{q+1}}\right]
  \leq L_1^q\, \eta_t^q\cdot
    \frac{2^{q-1}\bigl(1 + \Gamma(q+1)\bigr)}{(1-qu)^{q+1}}
  = L_1^q\, \eta_t^q\, \widetilde C_{B,q},
\]
where we used $(1-qu)^{-1} \leq (1-qu)^{-(q+1)}$ since $0 \leq qu < 1$.

\smallskip\noindent\textbf{Step 6 (Moments of $\widetilde A$).}
Applying $(a+b)^q \leq 2^{q-1}(a^q + b^q)$ inside
\eqref{eq:app:A-tilde}:
\[
  \widetilde A^q
  \leq L_0^q\, 2^{q-1}\bigl[(|w_t|+s_t)^q
    + (L_1\rho)^q\, (|w_t|+s_t)^q\, s_t^q\, e^{qL_1\rho s_t}\bigr].
\]
The first term's expectation is at most $M_{ws}\eta_t^q$ by
Definition~\ref{def:app:moments}. For the second, apply $(a+b)^q \leq
2^{q-1}(a^q+b^q)$ once more to $(|w_t|+s_t)^q = (\eta_t + s_t)^q \leq
2^{q-1}(\eta_t^q + s_t^q)$:
\begin{align*}
  \mathbb{E}\!\left[(\eta_t + s_t)^q s_t^q e^{qL_1\rho s_t}\right]
  &\leq 2^{q-1}\bigl(\eta_t^q\, \mathbb{E}[s_t^q e^{qL_1\rho s_t}]
    + \mathbb{E}[s_t^{2q} e^{qL_1\rho s_t}]\bigr)
  \\
  &\leq 2^{q-1}\!\left(\frac{\Gamma(q+1)\,\eta_t^{2q}}{(1-qu)^{q+1}}
    + \frac{\Gamma(2q+1)\,\eta_t^{2q}}{(1-qu)^{2q+1}}\right)
  \\
  &\leq \eta_t^{2q}\cdot
    \frac{2^{q-1}\bigl(\Gamma(q+1) + \Gamma(2q+1)\bigr)}{(1-qu)^{2q+1}},
\end{align*}
using $(1-qu)^{-(q+1)} \leq (1-qu)^{-(2q+1)}$. Multiplying by $(L_1\rho)^q$
and noting $(L_1\rho\eta_t)^q = u^q$,
\[
  (L_1\rho)^q\, \mathbb{E}\!\left[(\eta_t + s_t)^q s_t^q e^{qL_1\rho s_t}\right]
  \leq \eta_t^q \cdot u^q \cdot
    \frac{2^{q-1}\bigl(\Gamma(q+1) + \Gamma(2q+1)\bigr)}{(1-qu)^{2q+1}}.
\]
Combining, $\mathbb{E}[\widetilde A^q] \leq L_0^q\, \eta_t^q\, \widetilde
C_{A,q}$ with $\widetilde C_{A,q}$ as defined in~\eqref{eq:app:CAB}. The
$u^q$ factor is bounded by $2^{-q}$ under~\eqref{eq:app:stepsize-mgf}, so
the second term in the bracket of $\widetilde C_{A,q}$ is a numerical
constant.

\smallskip\noindent\textbf{Step 7 (Assembly).}
Substituting the moments of $\widetilde A$ and $\widetilde B$
into~\eqref{eq:app:T2-cond},
\begin{equation}
  \mathbb{E}[\|T_2\|_2^q \mid \mathcal{F}_t]
  \leq \kappa^q \rho^q\, 2^{q-1}\, \eta_t^q\,
    \bigl(L_0^q\, \widetilde C_{A,q}
      + L_1^q\, \widetilde C_{B,q}\, \|\nabla f(x_t)\|_*^q\bigr).
  \label{eq:app:T2-final}
\end{equation}
Now combine~\eqref{eq:app:T1-final} and~\eqref{eq:app:T2-final}
via~\eqref{eq:app:Psi-split}. The baseline $\sigma_h$--term of $T_1$
gives $\eta_t^q\rho^q\sigma_h^q$, multiplied by the outer $2^{q-1}$
from~\eqref{eq:app:Psi-split}; the $\widetilde A$--term of $T_2$ gives
$\kappa^q\rho^q 2^{q-1}\cdot 2^{q-1} L_0^q\widetilde C_{A,q}\eta_t^q =
\kappa^q\rho^q 2^{2(q-1)} L_0^q \widetilde C_{A,q}\eta_t^q$. Their sum,
together with the factor $M_w = 1$ for RanSOM--E, is absorbed into
$C_\delta^q\eta_t^q\bar\sigma_h^q$ with $\bar\sigma_h$ as
in~\eqref{eq:app:sigma-bar} and $C_\delta^q$ as
in~\eqref{eq:app:C-delta}.

The gradient--coefficient terms: from $T_1$, $2^{q-1}\alpha_h^q/(1-qu)$;
from $T_2$, $2^{2(q-1)} L_1^q\widetilde C_{B,q}$. Their sum
is~\eqref{eq:app:alpha-bar} times $\kappa^q\rho^q$, absorbed into
$C_\delta^q\bar\alpha_h^q$.

The higher--order term $\alpha_h^q L_0^q\rho^q\eta_t^q \cdot
2^{q-1}\Gamma(q+1)/(1-qu)^{q+1}$ from~\eqref{eq:app:T1-final} carries an
extra $\eta_t^q$; since $\eta_t \leq \bar\eta$ (finite), it may be
absorbed into $\bar\sigma_h^q$ with a bounded multiplicative constant.

This proves~\eqref{eq:app:hess-noise}. At $L_1 = 0$ and $\alpha_h = 0$,
$\widetilde C_{A,q}$ reduces to $2^{q-1} M_{ws}$, so $\bar\sigma_h^q =
\kappa^q\rho^q[\sigma_h^q + 2^{q-1}M_{ws} L_0^q] \asymp
\kappa^q\rho^q(\sigma_h + L_0)^q$, matching $\bar\sigma_h = \rho(\sigma_h
+ \kappa L_0)$ of the main text's Lemma~\ref{lemma:main_error} up to a
numerical constant absorbed into $C_\delta$.
\end{proof}


\subsection{Hessian--Noise Lemma (Constrained)}
\label{app:hessian-noise-constrained}

In the constrained setting, optimization is over a compact convex set
$\mathcal{C} \subseteq \mathcal{E}$ with diameter $D \triangleq
\sup_{x, y \in \mathcal{C}}\|x - y\|$. Since $f$ is continuously
differentiable and $\mathcal{C}$ is compact, the gradient is uniformly
bounded on $\mathcal{C}$:
\begin{equation}
  G_\mathcal{C} \;\triangleq\; \sup_{x \in \mathcal{C}}\|\nabla f(x)\|_*
  \;<\; \infty.
  \label{eq:app:grad-bound}
\end{equation}
This uniform bound lets us absorb the affine growth $\|\nabla f\|_*$
into an effective constant, avoiding the need to relate $\|\nabla f\|_*$
to the Frank--Wolfe gap. Define the \emph{effective constants}
\begin{equation}
  L \triangleq L_0 + L_1 G_\mathcal{C}, \qquad
  \tilde\sigma_g^p \triangleq \sigma_g^p + \alpha_g^p G_\mathcal{C}^p,
  \qquad
  \tilde\sigma_h^q \triangleq \sigma_h^q + \alpha_h^q G_\mathcal{C}^q.
  \label{eq:app:eff-const}
\end{equation}
Then, on $\mathcal{C}$:
\[
  \|\nabla^2 f(x)\|_{\mathrm{op}} \leq L,
  \qquad
  \mathbb{E}_\xi\bigl[\|(H-\nabla^2 f)w\|_2^q \mid x, w\bigr]
  \leq \tilde\sigma_h^q\,\|w\|^q,
  \qquad
  \mathbb{E}_\xi\bigl[\|g - \nabla f\|_2^p \mid x\bigr] \leq \tilde\sigma_g^p.
\]
This reduces the analysis to the classical smoothness / bounded--noise
setting, with no Gr\"onwall argument needed. The only complication
relative to a deterministic--step analysis is that $w_t = (1-s_t)/K_t$
is now random.

\begin{lemma}[Hessian--Noise Bound, constrained]
\label{lem:app:hess-noise-b}
Consider RanSOM--B with $s_t \sim \mathrm{Beta}(1, K_t)$, $w_t =
(1-s_t)/K_t$, $K_t = \eta_t^{-1} - 1$, and update direction $d_t = v_t -
x_t$ satisfying $\|d_t\| \leq D$. Under
Assumptions~\ref{ass:app:smooth}--\ref{ass:app:noise}, and for any step
size $\eta_t \in (0, 1/2]$,
\begin{equation}
  \mathbb{E}\bigl[\|\Psi_{t+1}\|_2^q \,\big|\, \mathcal{F}_t\bigr]
  \;\leq\; C_{\delta,B}^q\, \eta_t^q\, \tilde\sigma_{h,\mathrm{eff}}^q,
  \label{eq:app:hess-noise-b}
\end{equation}
where
\begin{equation}
  \tilde\sigma_{h,\mathrm{eff}}^q
  \triangleq \kappa^q D^q\,\bigl(\tilde\sigma_h^q + L^q\bigr),
  \qquad
  C_{\delta,B}^q \triangleq 2^{2(q-1)}\cdot \max\!\bigl(M_w, M_{ws}\cdot 2^{q-1}\bigr),
  \label{eq:app:sigma-b}
\end{equation}
and $M_w, M_{ws}$ are the normalized moments of $(w_t, s_t)$ from
Definition~\ref{def:app:moments}.
\end{lemma}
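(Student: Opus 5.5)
We follow the same template as the proof of Lemma~\ref{lem:app:hess-noise}, but in the simpler effective-constant regime of~\eqref{eq:app:eff-const}. No Gr\"onwall transfer is needed, since every point on the segment lies in $\mathcal{C}$ by convexity. Write $x_\tau = x_t + \tau s_t d_t$ for $\tau\in[0,1]$. Because $s_t\in[0,1]$, each $x_\tau$ is a convex combination of $x_t$ and $v_t$, hence lies in $\mathcal{C}$. Therefore $\|\nabla^2 f(x_\tau)\|_{\mathrm{op}} \le L$ and $\|\nabla f(x_{t+1})\|_*\le G_\mathcal{C}$ hold almost surely.

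We use the same decomposition~\eqref{eq:app:Psi-decomp}, $\Psi_{t+1} = T_1 + T_2$, where
$T_1 = w_t[H_\xi(x_{t+1})-\nabla^2 f(x_{t+1})]d_t$ and
$T_2 = w_t\nabla^2 f(x_{t+1})d_t - s_t\int_0^1\nabla^2 f(x_\tau)d_t\,d\tau$.
The split~\eqref{eq:app:Psi-split} then contributes an outer factor $2^{q-1}$.

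\textbf{Bounding $T_1$.} Condition on $(x_{t+1}, s_t, w_t)$. The direction $d_t$ is $\mathcal{F}_t$-measurable, and the oracle noise is conditionally independent of $(s_t,w_t)$ given $x_{t+1}$, so Assumption~\ref{ass:app:noise} applies with $w=d_t$. Using $\|d_t\|\le D$ and $\|\nabla f(x_{t+1})\|_*\le G_\mathcal{C}$, this gives
$\mathbb{E}_\xi[\|T_1\|_2^q\mid x_{t+1},s_t,w_t]\le |w_t|^q D^q\tilde\sigma_h^q$.
Since $w_t$ is now random, we take the outer expectation over $s_t$, which is independent of $\mathcal{F}_t$. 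This produces $\mathbb{E}[|w_t|^q]=\eta_t^q M_w$. We then have $\mathbb{E}[\|T_1\|_2^q\mid\mathcal{F}_t]\le M_w\,\eta_t^q D^q\tilde\sigma_h^q$, which is at most $M_w\eta_t^q\kappa^qD^q\tilde\sigma_h^q$ because $\kappa\ge1$.

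\textbf{Bounding $T_2$.} This is a pathwise estimate. Applying the triangle inequality, the operator-norm bound, and Assumption~\ref{ass:app:kappa} gives
$\|T_2\|_2\le\kappa\|T_2\|_*\le \kappa D L(|w_t|+s_t)$.
Raising to the $q$-th power and taking expectations yields $\kappa^qD^qL^q\,\eta_t^q M_{ws}$.

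\textbf{Assembling.} Combining the two pieces gives
$\mathbb{E}[\|\Psi_{t+1}\|_2^q\mid\mathcal{F}_t]\le 2^{q-1}\eta_t^q\kappa^qD^q\bigl(M_w\tilde\sigma_h^q + M_{ws}L^q\bigr)$.
This is at most $2^{q-1}\max(M_w,M_{ws})\,\eta_t^q\tilde\sigma^q_{h,\mathrm{eff}}$, which in turn is bounded by $C_{\delta,B}^q\eta_t^q\tilde\sigma^q_{h,\mathrm{eff}}$ because $2^{q-1}\max(M_w,M_{ws})\le 2^{2(q-1)}\max(M_w,2^{q-1}M_{ws})$.

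The constants $M_w$ and $M_{ws}$ are finite and independent of $\eta_t$ for $\eta_t\le 1/2$ by the bounds computed in Section~\ref{app:constants}. This is where the hypothesis $\eta_t\le1/2$ enters: it guarantees $K_t\ge1$ and $|w_t|/\eta_t\le 2$.

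The main point needing care is the conditioning in the $T_1$ step. We must check that Assumption~\ref{ass:app:noise} may be applied with the random weight $w_t$ factored out, and that the expectation over $s_t$ of $|w_t|^q$ separates cleanly from the oracle-noise expectation. This relies on $w_t$ being a function of $s_t$ alone, and on the oracle noise being conditionally independent of $s_t$ given $x_{t+1}$, as stipulated in Section~\ref{app:notation}. Everything else is a deterministic bound that uses convexity of $\mathcal{C}$ and the bounded support of the Beta distribution.
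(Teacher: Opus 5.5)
Your proposal is correct and matches the paper's proof essentially step for step. It uses the same $T_1 + T_2$ split with the outer $2^{q-1}$ factor, the same $T_1$ bound via the noise assumption with $|w_t|^q$ averaged into $M_w\eta_t^q$, the same pathwise bound $\kappa D L(|w_t|+s_t)$ on $T_2$, and the same $\max$-based assembly. Your remark that the whole segment stays in $\mathcal{C}$ by convexity (so the uniform bounds $L$ and $G_\mathcal{C}$ apply along it) spells out a step the paper leaves implicit.
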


\begin{proof}
Decompose $\Psi_{t+1} = T_1 + T_2$ as in~\eqref{eq:app:Psi-decomp}, and
split $\mathbb{E}[\|\Psi_{t+1}\|_2^q \mid \mathcal{F}_t] \leq
2^{q-1}(\mathbb{E}[\|T_1\|_2^q \mid \mathcal{F}_t] + \mathbb{E}[\|T_2\|_2^q
\mid \mathcal{F}_t])$.

\smallskip\noindent\textbf{Step 1 (Bounding $T_1$).}
Apply Assumption~\ref{ass:app:noise} with $\|d_t\| \leq D$, conditioning
on $(s_t, w_t)$:
\[
  \mathbb{E}_\xi\bigl[\|T_1\|_2^q \,\big|\, x_{t+1}, s_t, w_t\bigr]
  \leq |w_t|^q\, D^q\, \tilde\sigma_h^q,
\]
using the uniform bound $\|\nabla f(x_{t+1})\|_*^q \leq G_\mathcal{C}^q$
to absorb the affine term into $\tilde\sigma_h^q$. Take $\mathbb{E}[\,\cdot
\mid \mathcal{F}_t]$:
\begin{equation}
  \mathbb{E}[\|T_1\|_2^q \mid \mathcal{F}_t]
  \leq \mathbb{E}[|w_t|^q]\cdot D^q \tilde\sigma_h^q
  = M_w\, \eta_t^q\, D^q\, \tilde\sigma_h^q.
  \label{eq:app:T1-b}
\end{equation}

\smallskip\noindent\textbf{Step 2 (Bounding $T_2$).}
On $\mathcal{C}$, $\|\nabla^2 f(x)\|_{\mathrm{op}} \leq L$ uniformly, so
from~\eqref{eq:app:T2-norm} (with $\rho$ replaced by $D$ since $\|d_t\|
\leq D$):
\[
  \|T_2\|_2
  \leq \kappa D\,\Bigl(|w_t|\, L + s_t \int_0^1 L\, d\tau\Bigr)
  = \kappa D L\, (|w_t| + s_t).
\]
Raising to the $q$-th power and taking $\mathbb{E}[\,\cdot \mid
\mathcal{F}_t]$:
\begin{equation}
  \mathbb{E}[\|T_2\|_2^q \mid \mathcal{F}_t]
  \leq \kappa^q D^q L^q\, \mathbb{E}[(|w_t|+s_t)^q]
  = \kappa^q D^q L^q\, M_{ws}\, \eta_t^q.
  \label{eq:app:T2-b}
\end{equation}

\smallskip\noindent\textbf{Step 3 (Assembly).}
Combining,
\begin{align*}
  \mathbb{E}[\|\Psi_{t+1}\|_2^q \mid \mathcal{F}_t]
  &\leq 2^{q-1}\bigl(M_w\, \eta_t^q D^q \tilde\sigma_h^q
    + \kappa^q D^q L^q M_{ws}\, \eta_t^q\bigr)
  \\
  &\leq 2^{q-1}\, \kappa^q D^q\, \eta_t^q\, \max\bigl(M_w, M_{ws}\bigr)\,
    \bigl(\tilde\sigma_h^q + L^q\bigr)
  \\
  &\leq C_{\delta,B}^q\, \eta_t^q\, \kappa^q D^q\,
    \bigl(\tilde\sigma_h^q + L^q\bigr),
\end{align*}
using $\kappa \geq 1$ in the second line to bound $M_w \leq \kappa^q
M_w$, and the definition of $C_{\delta,B}$ in~\eqref{eq:app:sigma-b}.
This is~\eqref{eq:app:hess-noise-b}.
\end{proof}

\begin{remark}
Comparing with the main--text statement
(Lemma~\ref{lemma:main_error} with $\bar\sigma_h = 2D(\sigma_h + L_0)$):
at $L_1 = \alpha_h = 0$, we have $L = L_0$ and $\tilde\sigma_h =
\sigma_h$, so $\kappa D(\tilde\sigma_h + L) = \kappa D(\sigma_h + L_0)$,
matching $\bar\sigma_h = 2D(\sigma_h + L_0)$ up to a numerical constant
(the factor 2 is absorbed into $C_\delta$).
\end{remark}


\subsection{Auxiliary Martingale Lemmas}
\label{app:martingale}

\begin{lemma}[von Bahr--Esseen inequality, \citet{vonbahr1965}]
\label{lem:app:vbe}
Let $\{X_j\}_{j=1}^t$ be a martingale difference sequence in a Hilbert
space with respect to a filtration $\{\mathcal{H}_j\}$. For every
$r \in (1, 2]$, there is a constant $C_r \leq 2$ such that
\begin{equation}
  \mathbb{E}\!\left[\Bigl\|\textstyle\sum_{j=1}^t X_j\Bigr\|_2^r\right]
  \leq C_r \sum_{j=1}^t \mathbb{E}\bigl[\|X_j\|_2^r\bigr].
  \label{eq:app:vbe}
\end{equation}
\end{lemma}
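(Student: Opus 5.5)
The plan is to derive the inequality from a one-step pointwise estimate for the function $\phi(x)=\|x\|_2^r$ on the Hilbert space, and then telescope along the partial sums $S_k=\sum_{j\le k}X_j$. We may assume $\mathbb{E}\|X_j\|_2^r<\infty$ for every $j$, since otherwise the right-hand side is infinite and there is nothing to prove. Under this assumption each $S_k$ also has a finite $r$-th moment.

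\textbf{Step 1 (pointwise inequality).} I will show that for all vectors $a,b$ and $r\in(1,2]$,
\[
\|a+b\|_2^r \le \|a\|_2^r + r\,\|a\|_2^{r-2}\langle a,b\rangle + C_r\|b\|_2^r ,
\]
with $C_r\le 2$. We use the convention that the middle term is $0$ when $a=0$. The route goes through the gradient map $\nabla\phi(x)=r\|x\|_2^{r-2}x$, which is continuous because $r>1$. The key claim is that $x\mapsto \|x\|_2^{r-2}x$ is $(r-1)$-Hölder with constant $c_r\le 2^{2-r}$:
\[
\big\|\|x\|^{r-2}x-\|y\|^{r-2}y\big\| \le 2^{2-r}\|x-y\|^{r-1}.
\]
Given this, the fundamental theorem of calculus along $t\mapsto a+tb$ gives
\[
\phi(a+b)-\phi(a)-\langle\nabla\phi(a),b\rangle=\int_0^1\langle\nabla\phi(a+tb)-\nabla\phi(a),b\rangle\,dt \le r\,2^{2-r}\|b\|^r\int_0^1 t^{r-1}\,dt = 2^{2-r}\|b\|^r .
\]
So one can take $C_r=2^{2-r}\le 2$.

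\textbf{Step 2 (Hölder bound).} This step is the main obstacle. Everything lies in $\mathrm{span}\{x,y\}$, so it suffices to work in $\mathbb{R}^2$. By homogeneity and rotation we may normalize $\|x\|=1\ge\|y\|$. I would split into two cases.

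\emph{Case $\|x-y\|\ge 1$.} Bound the left side crudely by $\|x\|^{r-1}+\|y\|^{r-1}\le 2\le 2\|x-y\|^{r-1}$. This gives constant $2$. Sharpening to $2^{2-r}$ is unnecessary for the stated bound $C_r\le 2$.

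\emph{Case $\|x-y\|<1$.} Use the chain of estimates
\[
\big\|x-\|y\|^{r-2}y\big\| \le \|x-y\| + \big(\|y\|^{r-2}-1\big)\|y\| = \|x-y\| + \|y\|^{r-1}-\|y\| .
\]
Then combine $\|y\|^{r-1}-\|y\|\le (1-\|y\|)^{r-1}$, which follows from the concavity or subadditivity of $s\mapsto s^{r-1}$ on $[0,1]$, with $1-\|y\|\le\|x-y\|$ and $\|x-y\|\le\|x-y\|^{r-1}$ for $\|x-y\|<1$. This gives a constant of at most $2$.

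With the constant $2$ from both cases, Step 1 yields $C_r\le 2\cdot r\cdot\frac1r = 2$, which is what the statement requires.

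\textbf{Step 3 (martingale telescoping).} Apply Step 1 with $a=S_{k-1}$ and $b=X_k$, then take $\mathbb{E}[\cdot\mid\mathcal{H}_{k-1}]$. The cross term is integrable by Hölder's inequality with exponents $r/(r-1)$ and $r$, since it is bounded by $r\|S_{k-1}\|^{r-1}\|X_k\|$. Because $S_{k-1}$ is $\mathcal{H}_{k-1}$-measurable and $\mathbb{E}[X_k\mid\mathcal{H}_{k-1}]=0$, the cross term has zero conditional mean. Hence
\[
\mathbb{E}\|S_k\|^r\le \mathbb{E}\|S_{k-1}\|^r + C_r\,\mathbb{E}\|X_k\|^r .
\]
Summing from $k=1$ to $t$, with $S_0=0$, gives \eqref{eq:app:vbe}.
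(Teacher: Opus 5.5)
Your proof is correct, and it differs from the paper only because the paper gives no argument at all. The lemma is imported as a black box from von Bahr and Esseen (1965). That paper treats real-valued summands, so the Hilbert-space form used throughout the appendix (for $\bar\xi_j$ and $\bar\Psi_j$ in $\mathbb{R}^d$ with $\|\cdot\|_2$) is effectively taken from later vector-valued extensions without comment.

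Your route proves that vector-valued version directly, in two steps:
\begin{itemize}
\item You derive the pointwise $r$-smoothness inequality $\|a+b\|_2^r \le \|a\|_2^r + r\|a\|_2^{r-2}\langle a,b\rangle + 2\|b\|_2^r$ from the $(r-1)$-H\"older continuity of $x\mapsto\|x\|_2^{r-2}x$ with constant $2$.
\item You telescope along the partial sums, and the cross term vanishes because $\mathbb{E}[X_k\mid\mathcal{H}_{k-1}]=0$. Your H\"older-inequality check that $\|S_{k-1}\|_2^{r-1}\|X_k\|_2$ is integrable is exactly what this step needs.
\end{itemize}
The argument is dimension-free and uses only the martingale-difference property. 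It would also carry over verbatim to any Banach space whose norm satisfies such an $r$-smoothness inequality. The citation buys brevity and nothing more.

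Two small remarks:
\begin{itemize}
\item Step~2 establishes only the H\"older constant $2$, not the sharper $2^{2-r}$ announced in Step~1. This is harmless, since $C_r\le 2$ is all the statement requires.
\item The inequality $s^{r-1}-s\le(1-s)^{r-1}$ on $[0,1]$ is most simply justified by $s^{r-1}-s\le 1-s\le(1-s)^{r-1}$. Subadditivity of $s\mapsto s^{r-1}$ by itself gives the different bound $1-s^{r-1}\le(1-s)^{r-1}$.
\end{itemize}
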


\begin{lemma}[Affine noise peeling]
\label{lem:app:peeling}
Let $\{c_j\}_{j=1}^t$ be non--negative deterministic scalars and
$\{\xi_j\}_{j=1}^t$ a martingale difference sequence satisfying
\[
  \mathbb{E}\bigl[\|\xi_j\|_2^p \,\big|\, \mathcal{H}_{j-1}\bigr]
  \leq \sigma^p + \alpha^p Y_j^p,
\]
where $Y_j \geq 0$ is $\mathcal{H}_{j-1}$--measurable and $p \in (1, 2]$.
Then
\begin{equation}
  \mathbb{E}\!\left[\Bigl(\textstyle\sum_{j=1}^t c_j^p \|\xi_j\|_2^p\Bigr)^{1/p}\right]
  \leq \sigma\,\Bigl(\textstyle\sum_{j=1}^t c_j^p\Bigr)^{1/p}
    + \alpha \sum_{j=1}^t c_j\, \mathbb{E}[Y_j].
  \label{eq:app:peeling}
\end{equation}
\end{lemma}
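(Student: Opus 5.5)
The plan is to normalize each noise term by its conditional scale, and then use two elementary facts. First, $x \mapsto x^{1/p}$ is concave and subadditive on $[0,\infty)$ because $1/p \in [1/2, 1)$. Second, $Y_j$ is $\mathcal{H}_{j-1}$--measurable, so it can be pulled out of conditional expectations. The martingale-difference structure itself should not be needed; only the conditional moment bound is used.

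\textbf{Step 1 (normalization).} Set $V_j \triangleq \sigma^p + \alpha^p Y_j^p$, which is $\mathcal{H}_{j-1}$--measurable. Define $\zeta_j \triangleq \|\xi_j\|_2^p / V_j$ on $\{V_j > 0\}$ and $\zeta_j \triangleq 0$ on $\{V_j = 0\}$. On the event $\{V_j = 0\}$ the hypothesis gives $\mathbb{E}[\|\xi_j\|_2^p \mid \mathcal{H}_{j-1}] = 0$, so $\xi_j = 0$ a.s. there. Hence $\|\xi_j\|_2^p = V_j \zeta_j$ a.s., and $\mathbb{E}[\zeta_j \mid \mathcal{H}_{j-1}] \le 1$. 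Consequently
\[
\sum_j c_j^p \|\xi_j\|_2^p = \sigma^p \sum_j c_j^p \zeta_j + \alpha^p \sum_j c_j^p Y_j^p \zeta_j .
\]

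\textbf{Step 2 (split by subadditivity).} Apply $(a+b)^{1/p} \le a^{1/p} + b^{1/p}$ to the two groups:
\[
\Bigl(\sum_j c_j^p\|\xi_j\|_2^p\Bigr)^{1/p} \le \sigma\Bigl(\sum_j c_j^p\zeta_j\Bigr)^{1/p} + \alpha\Bigl(\sum_j c_j^p Y_j^p\zeta_j\Bigr)^{1/p}.
\]

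\textbf{Step 3 (first term).} By Jensen for the concave map $x^{1/p}$, the expectation of the first term is at most $\sigma\bigl(\sum_j c_j^p\,\mathbb{E}[\zeta_j]\bigr)^{1/p}$. By the tower property $\mathbb{E}[\zeta_j] \le 1$, so this is at most $\sigma\bigl(\sum_j c_j^p\bigr)^{1/p}$.

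\textbf{Step 4 (second term).} Here Jensen alone would leave $Y_j^p$ inside a $p$-th root of a sum, which does not give the linear $\sum_j c_j\,\mathbb{E}[Y_j]$. This is the only delicate point. The fix is to use subadditivity again, now termwise over $j$:
\[
\Bigl(\sum_j c_j^p Y_j^p \zeta_j\Bigr)^{1/p} \le \sum_j c_j Y_j \zeta_j^{1/p}.
\]
Since $Y_j$ is $\mathcal{H}_{j-1}$--measurable, conditional Jensen gives
\[
\mathbb{E}[Y_j \zeta_j^{1/p} \mid \mathcal{H}_{j-1}] = Y_j\, \mathbb{E}[\zeta_j^{1/p} \mid \mathcal{H}_{j-1}] \le Y_j \bigl(\mathbb{E}[\zeta_j \mid \mathcal{H}_{j-1}]\bigr)^{1/p} \le Y_j .
\]
Taking full expectations yields $\alpha \sum_j c_j\,\mathbb{E}[Y_j]$.

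Adding the bounds from Steps 3 and 4 gives~\eqref{eq:app:peeling}.
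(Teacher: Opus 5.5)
Your proof is correct, and it takes a genuinely different route from the paper.

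\textbf{The paper's route.} The paper argues by backward induction. With $S_k=\sum_{j\le k}c_j^p\|\xi_j\|_2^p$, conditional Jensen gives $\mathbb{E}[S_t^{1/p}\mid\mathcal{H}_{t-1}]\le(S_{t-1}+c_t^p\sigma^p+c_t^p\alpha^pY_t^p)^{1/p}\le(S_{t-1}+c_t^p\sigma^p)^{1/p}+c_t\alpha Y_t$. The paper then iterates, removing one affine term per step while the accumulated $\sigma^p$-mass stays under the root; the implicit induction quantity is $(S_k+\sigma^p\sum_{j>k}c_j^p)^{1/p}$. This never splits an individual $\|\xi_j\|_2^p$ into a constant part and an affine part, so it needs no normalization and no care on null events. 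It does, however, require $S_{j-1}$ to be $\mathcal{H}_{j-1}$-measurable, meaning the $\xi_j$ are adapted and the $\mathcal{H}_j$ are nested.

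\textbf{Your route.} Your proportional split $\|\xi_j\|_2^p=\sigma^p\zeta_j+\alpha^pY_j^p\zeta_j$ handles both contributions in one shot. The constant part keeps $\ell_p$ aggregation through a single unconditional Jensen. The affine part is deliberately relaxed to $\ell_1$ aggregation by termwise subadditivity, which makes clear why it ends up linear, as $\sum_j c_j\mathbb{E}[Y_j]$.

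\textbf{What each buys.} You only ever condition on one $\mathcal{H}_{j-1}$ at a time. Your argument therefore holds without adaptedness or nestedness, which makes it slightly more general than the paper's. Neither proof uses the zero-mean (martingale) property. The price you pay is the normalization step. Your treatment there is sound: $\xi_j=0$ a.s.\ on $\{V_j=0\}$, and $\mathbb{E}[\zeta_j\mid\mathcal{H}_{j-1}]\le 1$ holds because $V_j$ is $\mathcal{H}_{j-1}$-measurable.
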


\begin{proof}
Let $S_k = \sum_{j=1}^k c_j^p \|\xi_j\|_2^p$. Apply Jensen's inequality
conditionally with the concave map $x \mapsto x^{1/p}$:
\begin{align*}
  \mathbb{E}[S_t^{1/p} \mid \mathcal{H}_{t-1}]
  &\leq \bigl(S_{t-1} + c_t^p\, \mathbb{E}[\|\xi_t\|_2^p \mid \mathcal{H}_{t-1}]\bigr)^{1/p}
  \leq \bigl(S_{t-1} + c_t^p\sigma^p + c_t^p\alpha^p Y_t^p\bigr)^{1/p}
  \\
  &\leq \bigl(S_{t-1} + c_t^p\sigma^p\bigr)^{1/p} + c_t\alpha Y_t,
\end{align*}
where the last step uses $(x+y)^{1/p} \leq x^{1/p} + y^{1/p}$ for $x, y
\geq 0$ and $p \geq 1$ (subadditivity of concave functions vanishing at
$0$). Take expectation and iterate from $j = t$ down to $j = 1$.
\end{proof}

\begin{lemma}[Geometric sum bound]
\label{lem:app:geom}
For $\beta \in (0, 1/2]$ and $r \in (1, 2]$,
\[
  \sum_{k=0}^\infty (1-\beta)^{rk}
  = \frac{1}{1 - (1-\beta)^r}
  \leq \frac{1}{r\beta(1-\beta)^{r-1}}
  \leq \frac{2^{r-1}}{r\beta}
  \leq \frac{2}{r\beta}.
\]
Consequently, $\bigl(\sum_{k=0}^\infty (1-\beta)^{rk}\bigr)^{1/r} \leq
2^{1/r}(r\beta)^{-1/r} \leq 2(r\beta)^{-1/r}$.
\end{lemma}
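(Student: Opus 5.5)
The statement chains four elementary inequalities, and I will prove them from left to right. The approach is the geometric-series closed form, then a mean-value or Bernoulli-type bound on $1-(1-\beta)^r$, then the crude bounds coming from $\beta\le 1/2$ and $r\le 2$.

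\textbf{Step 1 (closed form).} Since $0<(1-\beta)^r<1$ for $\beta\in(0,1/2]$, the series $\sum_{k\ge0}(1-\beta)^{rk}$ is geometric with ratio $(1-\beta)^r$. Hence it equals $1/(1-(1-\beta)^r)$.

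\textbf{Step 2 (the key lower bound).} Here I need $1-(1-\beta)^r\ge r\beta(1-\beta)^{r-1}$. I will take $\phi(x)=x^r$ and apply the mean value theorem on $[1-\beta,1]$. This gives $1-(1-\beta)^r=r\,\zeta^{r-1}\beta$ for some $\zeta\in(1-\beta,1)$. Because $r-1\ge0$, the map $x\mapsto x^{r-1}$ is nondecreasing, so $\zeta^{r-1}\ge(1-\beta)^{r-1}$. Taking reciprocals yields the second inequality.

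\textbf{Step 3 (crude bounds).} From $\beta\le1/2$ we get $(1-\beta)^{r-1}\ge 2^{-(r-1)}$, which gives $\frac{1}{r\beta(1-\beta)^{r-1}}\le\frac{2^{r-1}}{r\beta}$. Since $r\le2$, we have $2^{r-1}\le 2$, which gives the final inequality of the chain.

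\textbf{Step 4 (the consequence).} I will raise $\sum_k(1-\beta)^{rk}\le 2/(r\beta)$ to the power $1/r$. This gives $2^{1/r}(r\beta)^{-1/r}$, and since $2^{1/r}\le 2$ for $r>1$, the stated bound follows.

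The only step with any real content is Step 2, where one must use the correct endpoint of the mean value theorem: $\zeta^{r-1}$ is bounded below by its value at the left endpoint $1-\beta$, which relies on $r\ge1$. Everything else is routine.
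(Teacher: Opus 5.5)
Your proposal is correct and matches the paper's proof: the paper applies the mean value theorem to $h(x)=1-(1-x)^r$ on $[0,\beta]$, which is your application to $x^r$ on $[1-\beta,1]$ under the substitution $\zeta=1-c$. It then uses the same bounds from $\beta\le 1/2$ and $r\le 2$, and your Steps 1 and 4 fill in the routine geometric-series and $1/r$-power steps the paper leaves implicit.
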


\begin{proof}
The mean--value inequality applied to $h(x) = 1 - (1-x)^r$ gives $h(x) -
h(0) = h'(c) x$ for some $c \in [0, x]$, i.e. $1 - (1-\beta)^r =
r(1-c)^{r-1}\beta \geq r(1-\beta)^{r-1}\beta$ since $c \leq \beta$ and
$r-1 \geq 0$. For $\beta \leq 1/2$, $(1-\beta)^{r-1} \geq (1/2)^{r-1} =
2^{-(r-1)}$.
\end{proof}


\subsection{Descent Lemma (Unconstrained)}
\label{app:descent-e}

\begin{lemma}[Descent Inequality for RanSOM--E]
\label{lem:app:descent}
Under Assumption~\ref{ass:app:smooth}, the stepsize
condition~\eqref{eq:app:stepsize-mgf}, and
$\eta_t \leq 1/(2 C_s \rho L_1)$, the RanSOM--E update satisfies
\begin{equation}
  \mathbb{E}[f(x_{t+1}) \mid \mathcal{F}_t]
  \leq f(x_t)
    - \tfrac{\rho \eta_t}{2}\, \|\nabla f(x_t)\|_*
    + 2\rho \kappa \eta_t\, \|e_t\|_2
    + \tfrac{C_s \rho^2 L_0}{2}\, \eta_t^2\cdot C_{\mathrm{curv}},
  \label{eq:app:descent}
\end{equation}
where $C_{\mathrm{curv}}$ is a numerical constant bounded by $4$ and can
be absorbed into $C_s$, giving the main--text form $\tfrac{C_s\rho^2
L_0}{2}\eta_t^2$ with a redefined $C_s$ (no larger than 8).
\end{lemma}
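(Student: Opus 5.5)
We will follow the standard normalized/LMO descent argument, taking care to integrate the second-order remainder along the random segment under $(L_0,L_1)$-smoothness. Condition on $\mathcal{F}_t$ and write $x_{t+1}=x_t+s_t d_t$ with $\|d_t\|\le\rho$. Taylor's formula with integral remainder gives
\[
f(x_{t+1}) = f(x_t) + s_t\langle \nabla f(x_t), d_t\rangle + s_t^2\int_0^1 (1-\tau)\langle \nabla^2 f(x_\tau) d_t, d_t\rangle\, d\tau ,
\]
where $x_\tau = x_t+\tau s_t d_t$. The first-order term is handled exactly as in normalized SGD. Since $d_t$ is $\mathcal{F}_t$-measurable and $s_t$ is independent of $\mathcal{F}_t$ with $\mathbb{E}[s_t]=\eta_t$, its conditional expectation is $\eta_t\langle\nabla f(x_t),d_t\rangle$. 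We split this as
\[
\langle\nabla f(x_t),d_t\rangle=\langle m_t,d_t\rangle-\langle e_t,d_t\rangle .
\]
LMO optimality gives $\langle m_t,d_t\rangle=-\rho\|m_t\|_*$, and duality gives $-\langle e_t,d_t\rangle\le \rho\|e_t\|_*$. Using $\|m_t\|_*\ge\|\nabla f(x_t)\|_*-\|e_t\|_*$ and Assumption~\ref{ass:app:kappa}, we obtain
\[
\eta_t\langle\nabla f(x_t),d_t\rangle \le -\rho\eta_t\|\nabla f(x_t)\|_* + 2\rho\kappa\eta_t\|e_t\|_2 .
\]
This already has a full $-\rho\eta_t\|\nabla f(x_t)\|_*$; half of it is kept for the statement, and the other half will absorb the $L_1$-part of the curvature term.

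For the remainder, bound $|\langle \nabla^2 f(x_\tau)d_t,d_t\rangle|\le \rho^2\|\nabla^2 f(x_\tau)\|_{\mathrm{op}}\le \rho^2(L_0+L_1\varphi(\tau))$ with $\varphi(\tau)=\|\nabla f(x_\tau)\|_*$. Then invoke the Gr\"onwall transfer \eqref{eq:app:gronwall} from the proof of Lemma~\ref{lem:app:hess-noise}, which gives $\varphi(\tau)\le e^{L_1\rho s_t}(\|\nabla f(x_t)\|_*+L_0\rho s_t)$. The remainder is then at most
\[
\tfrac{\rho^2}{2}s_t^2\Bigl[L_0\bigl(1+L_1\rho s_t e^{L_1\rho s_t}\bigr)+L_1e^{L_1\rho s_t}\|\nabla f(x_t)\|_*\Bigr].
\]
Take conditional expectations using the closed-form exponential moments \eqref{eq:app:exp-moment} with $\lambda=L_1\rho$ and $u=L_1\rho\eta_t$. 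The stepsize condition \eqref{eq:app:stepsize-mgf} gives $u\le 1/(2q)\le 1/2$, so $(1-u)^{-k}\le 2^k$. This yields
\[
\mathbb{E}[s_t^2e^{L_1\rho s_t}]=\frac{2\eta_t^2}{(1-u)^3}\le 16\eta_t^2,
\qquad
L_1\rho\,\mathbb{E}[s_t^3e^{L_1\rho s_t}]=\frac{6u\eta_t^2}{(1-u)^4}\le c\,\eta_t^2 .
\]

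Now assemble the pieces. The $L_0$-part contributes $\tfrac{C_s\rho^2L_0}{2}\eta_t^2\cdot C_{\mathrm{curv}}$ with $C_{\mathrm{curv}}$ a numerical constant; the claim that it is at most $4$ is bookkeeping that I expect to need some slack, possibly by using the tighter $qu\le 1/2$. The $L_1$-part contributes roughly $\tfrac{\rho^2L_1}{2}C_s\eta_t^2(1-u)^{-3}\|\nabla f(x_t)\|_*$. This must be dominated by $\tfrac{\rho\eta_t}{2}\|\nabla f(x_t)\|_*$, which requires
\[
\rho L_1 C_s\eta_t(1-u)^{-3}\le 1 .
\]
This is exactly where the hypothesis $\eta_t\le 1/(2C_s\rho L_1)$ enters, together with a bound on $(1-u)^{-3}$ from the mgf condition.

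\textbf{Main obstacle.} The hardest step is making the constants close cleanly: the factor $(1-u)^{-3}$ multiplies the gradient-linear term, so the naive bound can exceed $1/2$. I would first try to use $u\le 1/(2C_s)$, which follows from the hypothesis and is much smaller than $1/2$, so that $(1-u)^{-3}$ stays near $1$. If that still fails, I would shrink the constant in the stepsize requirement, since the lemma's constants are stated only up to absorption.
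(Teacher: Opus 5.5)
Your argument follows the paper's proof step for step: Taylor expansion with integral remainder, the LMO/duality bound on the first-order term, the Gr\"onwall transfer in its $\tau$-uniform form $e^{L_1\rho s_t}(\|\nabla f(x_t)\|_*+L_0\rho s_t)$, and the closed-form exponential moments. It also stalls where the paper's proof stalls. With the $\tau$-uniform factor, the expectation of your gradient-linear curvature term is exactly
\[
  \tfrac{\rho^2 L_1}{2}\,\mathbb{E}\bigl[s_t^2 e^{L_1\rho s_t}\bigr]\,\|\nabla f(x_t)\|_*
  = \rho\eta_t\,\frac{u}{(1-u)^3}\,\|\nabla f(x_t)\|_* .
\]
The hypotheses give only $u = L_1\rho\eta_t \le 1/(2C_s) = 1/4$; the mgf condition $qu\le 1/2$ is no stronger for $q\le 2$. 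At $u = 1/4$ the factor is $16/27 > 1/2$, so your first attempt fails. Your fallback, shrinking the stepsize constant, proves the lemma only under a stronger hypothesis than the one stated. The paper's proof does exactly the same thing: it explicitly needs $\eta_t \le 1/(16\rho L_1)$, a factor of four tighter than the statement.

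The missing idea is to keep the $\tau$-dependence in the first inequality of \eqref{eq:app:gronwall} and integrate it against the Taylor weight $(1-\tau)$, rather than bounding by the endpoint $\tau=1$. With $a = L_1\rho s_t$ you have $\int_0^1 (1-\tau)e^{a\tau}\,d\tau = (e^a - 1 - a)/a^2$. The gradient-linear part of the remainder is then pathwise $\|\nabla f(x_t)\|_*\,(e^a - 1 - a)/L_1$. Since $\mathbb{E}[e^{L_1\rho s_t}] = 1/(1-u)$,
\[
  \mathbb{E}\bigl[e^{a} - 1 - a\bigr] = \frac{1}{1-u} - 1 - u = \frac{u^2}{1-u}.
\]
This gives $\rho\eta_t\,\frac{u}{1-u}\,\|\nabla f(x_t)\|_* \le \frac{\rho\eta_t}{3}\|\nabla f(x_t)\|_*$ for $u \le 1/4$, which half the descent term absorbs under exactly the stated condition.

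The $L_0$ part, using $\int_0^1(1-\tau)\tau e^{a\tau}\,d\tau \le e^a/6$, becomes $\rho^2 L_0\eta_t^2\bigl(1 + u(1-u)^{-4}\bigr) \le 1.8\,\rho^2 L_0 \eta_t^2$. Even your cruder bound gives $\rho^2 L_0\eta_t^2\bigl(1 + 3u(1-u)^{-4}\bigr) \le 3.4\,\rho^2 L_0\eta_t^2$, so $C_{\mathrm{curv}} \le 4$ needs no extra slack. Your handling of the $L_0L_1$ cubic term as $6u\eta_t^2(1-u)^{-4}$ is also more careful than the paper's, which simply drops that term as lower order.
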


\begin{proof}
The key input is a second--order Taylor expansion along the segment
$[x_t, x_{t+1}]$ with integral remainder:
\begin{equation}
  f(x_{t+1}) = f(x_t) + \langle \nabla f(x_t), x_{t+1} - x_t\rangle
    + \int_0^1 (1-\tau)\,\langle x_{t+1} - x_t,\, \nabla^2 f(x_\tau)(x_{t+1} - x_t)\rangle\, d\tau,
  \label{eq:app:taylor}
\end{equation}
where $x_\tau = x_t + \tau(x_{t+1} - x_t) = x_t + \tau s_t d_t$.

\smallskip\noindent\textbf{Step 1 (Bound the quadratic remainder).}
By definition of the operator norm (induced by $\|\cdot\| \to
\|\cdot\|_*$) and Cauchy--Schwarz--type inequality,
\[
  \bigl|\langle u, \nabla^2 f(x_\tau) u\rangle\bigr|
  \leq \|u\| \cdot \|\nabla^2 f(x_\tau) u\|_*
  \leq \|u\|^2 \cdot \|\nabla^2 f(x_\tau)\|_{\mathrm{op}}.
\]
Applied with $u = x_{t+1} - x_t = s_t d_t$ (so $\|u\| \leq \rho s_t$),
and using Assumption~\ref{ass:app:smooth} and the Gr\"onwall
bound~\eqref{eq:app:gronwall} to bound $\|\nabla^2 f(x_\tau)\|_{\mathrm{op}}$:
\[
  \|\nabla^2 f(x_\tau)\|_{\mathrm{op}}
  \leq L_0 + L_1 \varphi(\tau)
  \leq L_0 + L_1 e^{L_1\rho s_t}(\|\nabla f(x_t)\|_* + L_0\rho s_t).
\]
The integral $\int_0^1 (1-\tau)\, d\tau = 1/2$, so substituting into
\eqref{eq:app:taylor} and using $\int_0^1(1-\tau)\cdot C\, d\tau = C/2$
for any constant $C$,
\[
  f(x_{t+1}) \leq f(x_t) + s_t \langle \nabla f(x_t), d_t\rangle
    + \tfrac12 (s_t\rho)^2\Bigl[L_0 + L_1 e^{L_1\rho s_t}(\|\nabla f(x_t)\|_*
      + L_0\rho s_t)\Bigr].
\]

\smallskip\noindent\textbf{Step 2 (Take the expectation over $s_t$).}
The first--order term: $\mathbb{E}[s_t] = \eta_t$, so
$\mathbb{E}[s_t \langle\nabla f(x_t), d_t\rangle \mid \mathcal{F}_t] =
\eta_t\langle\nabla f(x_t), d_t\rangle$.

The quadratic term: $\mathbb{E}[s_t^2] = C_s \eta_t^2$, and for the
exponential factors we use the MGF
formula~\eqref{eq:app:exp-moment}. With $u = L_1\rho\eta_t$ and $u \leq
1/(2q) \leq 1/2$ under~\eqref{eq:app:stepsize-mgf},
\begin{align*}
  \mathbb{E}[s_t^2 e^{L_1\rho s_t}]
  &= \frac{2\eta_t^2}{(1-u)^3}
  \leq \frac{2\eta_t^2}{(1/2)^3} = 16\eta_t^2,
  \\
  \mathbb{E}[s_t^3 e^{L_1\rho s_t}]
  &= \frac{6\eta_t^3}{(1-u)^4}
  \leq 96\eta_t^3.
\end{align*}
Substituting, using $\mathbb{E}[s_t^2] \leq 2\eta_t^2$ (i.e. $C_s = 2$
for RanSOM--E):
\begin{align*}
  \mathbb{E}\!\left[\tfrac12 (s_t\rho)^2 L_0 \mid \mathcal{F}_t\right]
  &= \tfrac{\rho^2 L_0}{2}\cdot 2\eta_t^2 = \rho^2 L_0 \eta_t^2,
  \\
  \mathbb{E}\!\left[\tfrac12 (s_t\rho)^2 L_1 e^{L_1\rho s_t}
    \|\nabla f(x_t)\|_* \mid \mathcal{F}_t\right]
  &\leq \tfrac{\rho^2 L_1}{2}\cdot 16\eta_t^2\, \|\nabla f(x_t)\|_*
  = 8\rho^2 L_1 \eta_t^2\, \|\nabla f(x_t)\|_*,
  \\
  \mathbb{E}\!\left[\tfrac12 (s_t\rho)^2 L_1 e^{L_1\rho s_t} L_0\rho s_t
    \mid \mathcal{F}_t\right]
  &\leq \tfrac{\rho^3 L_0 L_1}{2}\cdot 96\eta_t^3
  = 48\rho^3 L_0 L_1 \eta_t^3.
\end{align*}
The last term is $O(\eta_t^3)$ and can be dropped into lower order under
$\eta_t \leq 1$. The second term is proportional to $\|\nabla f(x_t)\|_*
\eta_t^2$; we absorb it via the stepsize condition $\eta_t \leq 1/(16
\rho L_1)$ (which implies $8\rho^2 L_1\eta_t^2 \leq \rho\eta_t/2$), so
this term is at most $\tfrac{\rho\eta_t}{2}\|\nabla f(x_t)\|_*$. This is
a mild numerical strengthening of the condition $\eta_t \leq 1/(2 C_s
\rho L_1)$ in the lemma statement (with $C_s = 2$ it becomes $\eta_t
\leq 1/(4\rho L_1)$; here we need $1/(16\rho L_1)$, a factor of 4
tighter).

Collecting,
\begin{equation}
  \mathbb{E}[f(x_{t+1}) \mid \mathcal{F}_t]
  \leq f(x_t) + \eta_t\langle\nabla f(x_t), d_t\rangle
    + \tfrac{\rho\eta_t}{2}\|\nabla f(x_t)\|_*
    + \rho^2 L_0 \eta_t^2 + O(\eta_t^3),
  \label{eq:app:desc-step2}
\end{equation}
where the $\tfrac{\rho\eta_t}{2}\|\nabla f(x_t)\|_*$ term absorbs the
gradient--dependent curvature.

\smallskip\noindent\textbf{Step 3 (Relate $\langle\nabla f(x_t), d_t\rangle$ to the LMO).}
By the LMO property $d_t = \mathrm{argmin}_{v : \|v\| \leq \rho}\langle
m_t, v\rangle$ and the definition of $e_t = m_t - \nabla f(x_t)$,
\begin{align*}
  \langle \nabla f(x_t), d_t\rangle
  &= \langle m_t, d_t\rangle - \langle e_t, d_t\rangle
  \leq -\rho\|m_t\|_* + \|e_t\|_*\cdot\|d_t\|
  \\
  &\leq -\rho\|\nabla f(x_t)\|_* + \rho\|e_t\|_* + \rho\|e_t\|_*
  = -\rho\|\nabla f(x_t)\|_* + 2\rho\|e_t\|_*,
\end{align*}
where the second line uses $\|m_t\|_* \geq \|\nabla f(x_t)\|_* -
\|e_t\|_*$ (reverse triangle) and $\|d_t\| \leq \rho$. Applying
$\|e_t\|_* \leq \kappa\|e_t\|_2$ (Assumption~\ref{ass:app:kappa}),
\[
  \langle \nabla f(x_t), d_t\rangle
  \leq -\rho\|\nabla f(x_t)\|_* + 2\rho\kappa\|e_t\|_2.
\]

\smallskip\noindent\textbf{Step 4 (Assembly).}
Substitute the LMO bound into~\eqref{eq:app:desc-step2}:
\begin{align*}
  \mathbb{E}[f(x_{t+1}) \mid \mathcal{F}_t]
  &\leq f(x_t) + \eta_t\bigl(-\rho\|\nabla f(x_t)\|_* + 2\rho\kappa\|e_t\|_2\bigr)
    + \tfrac{\rho\eta_t}{2}\|\nabla f(x_t)\|_* + \rho^2 L_0\eta_t^2
  \\
  &= f(x_t) - \tfrac{\rho\eta_t}{2}\|\nabla f(x_t)\|_*
    + 2\rho\kappa\eta_t\|e_t\|_2 + \rho^2 L_0\eta_t^2.
\end{align*}
This is~\eqref{eq:app:descent} with $C_s \cdot C_{\mathrm{curv}} / 2 = 1$,
i.e. the quadratic coefficient is exactly $\rho^2 L_0 \eta_t^2$. With
$C_s = 2$, this is $(C_s/2)\rho^2 L_0\eta_t^2 = \rho^2 L_0\eta_t^2$,
matching main--text Lemma~\ref{lemma:main_descent} exactly.
\end{proof}


\subsection{Descent Lemma (Constrained)}
\label{app:descent-b}

\begin{lemma}[Descent Inequality for RanSOM--B]
\label{lem:app:descent-b}
Under Assumption~\ref{ass:app:smooth} with effective
smoothness~\eqref{eq:app:eff-const}, and $s_t \sim \mathrm{Beta}(1, K_t)$
with $K_t = \eta_t^{-1} - 1$,
\begin{equation}
  \mathbb{E}[f(x_{t+1}) \mid \mathcal{F}_t]
  \leq f(x_t) - \eta_t\, \mathcal{G}(x_t)
    + 2D\eta_t\, \|e_t\|_2 + \tfrac{C_s L D^2}{2}\, \eta_t^2,
  \label{eq:app:descent-b}
\end{equation}
where $\mathcal{G}(x) = \max_{v \in \mathcal{C}}\langle \nabla f(x), x -
v\rangle$ is the Frank--Wolfe gap, $C_s = \mathbb{E}[s_t^2]/\eta_t^2 \leq
2$, and $L = L_0 + L_1 G_\mathcal{C}$.
\end{lemma}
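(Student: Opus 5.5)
We follow the template of the proof of Lemma~\ref{lem:app:descent}, with two simplifications afforded by the compact constraint set. First, every iterate stays in $\mathcal{C}$. Indeed, $s_t \in [0,1]$ almost surely and $x_{t+1} = (1-s_t)x_t + s_t v_t$ is a convex combination of two points of $\mathcal{C}$, so the whole segment $x_\tau = x_t + \tau s_t d_t$, $\tau\in[0,1]$, lies in $\mathcal{C}$. Consequently the effective bound $\|\nabla^2 f(x_\tau)\|_{\mathrm{op}} \le L = L_0 + L_1 G_\mathcal{C}$ from \eqref{eq:app:eff-const} holds uniformly along the path, and no Gr\"onwall transfer or moment generating function of $s_t$ is needed.

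The first step is the Taylor expansion \eqref{eq:app:taylor} with $u = s_t d_t$. Since $\|d_t\| = \|v_t - x_t\| \le D$, the quadratic remainder is bounded by $\int_0^1(1-\tau)\,L\,s_t^2 D^2\,d\tau = \tfrac12 L D^2 s_t^2$. This gives the pathwise inequality
\[
f(x_{t+1}) \le f(x_t) + s_t\langle \nabla f(x_t), d_t\rangle + \tfrac12 L D^2 s_t^2 .
\]
We then take the conditional expectation given $\mathcal{F}_t$. Here $d_t$ is $\mathcal{F}_t$-measurable because it depends only on $m_t$ and $x_t$, and $s_t$ is independent of $\mathcal{F}_t$. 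Hence $\mathbb{E}[s_t]=\eta_t$ and $\mathbb{E}[s_t^2]=C_s\eta_t^2$ with $C_s = 2/(1+\eta_t)\le 2$ (Section~\ref{app:constants}), which produces the term $\tfrac{C_s L D^2}{2}\eta_t^2$ and leaves the first-order term $\eta_t\langle\nabla f(x_t), d_t\rangle$.

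The second step is the Frank--Wolfe/LMO comparison. Let $v^\star\in\argmin_{v\in\mathcal{C}}\langle\nabla f(x_t),v\rangle$, so that $\langle\nabla f(x_t), v^\star - x_t\rangle = -\mathcal{G}(x_t)$. Writing $\nabla f(x_t) = m_t - e_t$ and using optimality of $v_t$ for $m_t$ gives
\[
\langle \nabla f(x_t), v_t - x_t\rangle = \langle m_t, v_t - x_t\rangle - \langle e_t, v_t-x_t\rangle \le \langle m_t, v^\star - x_t\rangle - \langle e_t, v_t - x_t\rangle .
\]
Substituting $m_t = \nabla f(x_t) + e_t$ on the right, this becomes $-\mathcal{G}(x_t) + \langle e_t, v^\star - v_t\rangle$, which is at most $-\mathcal{G}(x_t) + \|e_t\|_*\,\|v^\star-v_t\| \le -\mathcal{G}(x_t) + D\|e_t\|_*$.

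The remaining point, and the only mild obstacle, is matching the stated coefficient $2D\|e_t\|_2$ rather than $D\kappa\|e_t\|_2$. We plan to bound $\langle e_t, v^\star - x_t\rangle$ and $\langle e_t, v_t - x_t\rangle$ separately, which gives the factor $2D$. The conversion from $\|e_t\|_*$ to $\|e_t\|_2$ then requires Assumption~\ref{ass:app:kappa}, giving $2D\kappa\|e_t\|_2$. Either $D$ is understood in a norm where $\kappa$ is absorbed, for example by measuring the diameter so that $\langle e, y-x\rangle\le \|e\|_2\|y-x\|_2$ via Cauchy--Schwarz, or $\kappa$ is absorbed into $D$. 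We will state this convention explicitly.

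Combining the two steps and multiplying the LMO bound by $\eta_t\ge 0$ yields \eqref{eq:app:descent-b}.
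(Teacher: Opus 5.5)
Your proposal is correct and follows the paper's proof essentially step for step. It uses $L$-smoothness on $\mathcal{C}$ (which you justify more carefully than the paper by noting that the whole segment $[x_t,x_{t+1}]$ stays in $\mathcal{C}$), the moments $\mathbb{E}[s_t]=\eta_t$ and $\mathbb{E}[s_t^2]=C_s\eta_t^2$, the LMO comparison against $v^\star$, and the same convention of absorbing $\kappa$ into $D$. That convention is genuinely needed: without it the bound fails, e.g.\ for a linear $f$ on the $\ell_\infty$ box $[-1,1]^d$ with $d>16$. Your combined estimate $\langle e_t, v^\star - v_t\rangle \le D\|e_t\|_*$ is already a factor $2$ sharper than the paper's, so the separate bounding you plan is unnecessary.
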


\begin{proof}
On $\mathcal{C}$, $\|\nabla^2 f(x)\|_{\mathrm{op}} \leq L$ uniformly, so
$f$ is $L$--smooth on $\mathcal{C}$ in the usual (Lipschitz--gradient)
sense. The standard descent inequality
\[
  f(x_{t+1}) \leq f(x_t) + \langle \nabla f(x_t), x_{t+1} - x_t\rangle
    + \tfrac{L}{2}\|x_{t+1} - x_t\|^2
\]
follows. With $x_{t+1} - x_t = s_t d_t$ and $\|d_t\| = \|v_t - x_t\|
\leq D$:
\[
  f(x_{t+1}) \leq f(x_t) + s_t\langle\nabla f(x_t), d_t\rangle
    + \tfrac{L D^2}{2}\, s_t^2.
\]
Taking $\mathbb{E}[\,\cdot \mid \mathcal{F}_t]$ with $\mathbb{E}[s_t] =
\eta_t$, $\mathbb{E}[s_t^2] \leq C_s\eta_t^2$:
\[
  \mathbb{E}[f(x_{t+1}) \mid \mathcal{F}_t]
  \leq f(x_t) + \eta_t\langle\nabla f(x_t), d_t\rangle
    + \tfrac{C_s L D^2}{2}\eta_t^2.
\]

\smallskip\noindent\textbf{Relating $\langle\nabla f(x_t), d_t\rangle$ to the FW gap.}
Let $v^\star(x_t) \in \mathrm{argmin}_{v \in \mathcal{C}}\langle\nabla
f(x_t), v\rangle$ and $v_t \in \mathrm{argmin}_{v \in
\mathcal{C}}\langle m_t, v\rangle$. Then $d_t = v_t - x_t$, so
\begin{align*}
  \langle\nabla f(x_t), d_t\rangle
  &= \langle\nabla f(x_t), v_t - x_t\rangle
  = \langle m_t, v_t - x_t\rangle + \langle e_t, x_t - v_t\rangle
  \\
  &\leq \langle m_t, v^\star(x_t) - x_t\rangle + \|e_t\|_*\cdot \|x_t - v_t\|
  \qquad \text{(LMO optimality of } v_t\text{)}
  \\
  &= \langle\nabla f(x_t), v^\star(x_t) - x_t\rangle
    + \langle e_t, x_t - v^\star(x_t)\rangle + \|e_t\|_*\cdot D
  \\
  &\leq -\mathcal{G}(x_t) + \|e_t\|_*\cdot D + \|e_t\|_*\cdot D
  = -\mathcal{G}(x_t) + 2D\|e_t\|_*.
\end{align*}
Using $\|e_t\|_* \leq \kappa\|e_t\|_2$ and absorbing $\kappa$ into the
effective constants (we keep $\kappa = 1$ in the main--text display for
clarity, so one might redefine $D$ to include the $\kappa$ factor),
\[
  \langle\nabla f(x_t), d_t\rangle \leq -\mathcal{G}(x_t) + 2D\|e_t\|_2.
\]
Substituting gives~\eqref{eq:app:descent-b}.
\end{proof}


\subsection{Momentum Error Recursion}
\label{app:error-recursion}

We now bound the momentum error $e_t = m_t - \nabla f(x_t)$ for both
algorithms. The derivation is identical up to the specific form of the
Hessian--noise bound used (Lemma~\ref{lem:app:hess-noise} for RanSOM--E,
Lemma~\ref{lem:app:hess-noise-b} for RanSOM--B); we present it for
RanSOM--E and indicate the constrained variant at the end.

\smallskip\noindent\textbf{Unrolling.}
The momentum update is $m_{t+1} = (1-\beta)(m_t + \delta_{t+1}) + \beta
g_{t+1}$. Subtract $\nabla f(x_{t+1})$ from both sides and use the
definitions $e_{t+1} = m_{t+1} - \nabla f(x_{t+1})$,
$\bar\xi_{t+1} = g_{t+1} - \nabla f(x_{t+1})$, and
$\bar\Psi_{t+1} = \delta_{t+1} - (\nabla f(x_{t+1}) - \nabla f(x_t))$:
\begin{align*}
  e_{t+1}
  &= (1-\beta)(m_t + \delta_{t+1}) + \beta g_{t+1} - \nabla f(x_{t+1})
  \\
  &= (1-\beta)\bigl(m_t + (\nabla f(x_{t+1}) - \nabla f(x_t)) + \bar\Psi_{t+1}\bigr)
    + \beta\bigl(\nabla f(x_{t+1}) + \bar\xi_{t+1}\bigr) - \nabla f(x_{t+1})
  \\
  &= (1-\beta)\bigl(m_t - \nabla f(x_t)\bigr) + (1-\beta)\bar\Psi_{t+1}
    + \beta \bar\xi_{t+1}
  \\
  &= (1-\beta) e_t + \beta \bar\xi_{t+1} + (1-\beta)\bar\Psi_{t+1}.
\end{align*}
Unrolling from step $0$,
\begin{equation}
  e_{t+1} = (1-\beta)^{t+1} e_0
    + \underbrace{\sum_{j=1}^{t+1} \beta(1-\beta)^{t+1-j}\, \bar\xi_j}_{M_{\mathrm{grad}}}
    + \underbrace{\sum_{j=1}^{t+1} (1-\beta)^{t+2-j}\, \bar\Psi_j}_{M_{\mathrm{corr}}}.
  \label{eq:app:e-unrolled}
\end{equation}

\begin{lemma}[Error Bound, RanSOM--E]
\label{lem:app:error}
Under Assumptions~\ref{ass:app:smooth}--\ref{ass:app:noise} with constant
batch size $B$ and $\beta \in (0, 1/2]$, the RanSOM--E momentum error
satisfies
\begin{align}
  \mathbb{E}\|e_{t+1}\|_2
  &\leq (1-\beta)^{t+1}\|e_0\|_2
    + \frac{2\beta^{1-1/p}}{p^{1/p} B^{(p-1)/p}}\,\sigma_g
    + \frac{2\alpha_g}{B^{(p-1)/p}}\,
      \sum_{j=1}^{t+1}(1-\beta)^{t+1-j}\mathbb{E}\|\nabla f(x_j)\|_*
  \notag \\
  &\quad + \frac{2\eta\beta^{-1/q} C_\delta}{q^{1/q} B^{(q-1)/q}}\, \bar\sigma_h
    + \frac{2\eta C_\delta \bar\alpha_h}{B^{(q-1)/q}}\,
      \sum_{j=1}^{t+1}(1-\beta)^{t+1-j}\mathbb{E}\|\nabla f(x_{j-1})\|_*.
  \label{eq:app:error-bound}
\end{align}
\end{lemma}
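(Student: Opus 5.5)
We start from the unrolled representation~\eqref{eq:app:e-unrolled} and apply the triangle inequality to obtain
\[
  \mathbb{E}\|e_{t+1}\|_2 \leq (1-\beta)^{t+1}\|e_0\|_2 + \mathbb{E}\|M_{\mathrm{grad}}\|_2 + \mathbb{E}\|M_{\mathrm{corr}}\|_2 .
\]
The deterministic first term is already in final form. The two remaining terms are weighted sums of martingale differences with respect to $\{\mathcal{F}_{j}\}$. For $\bar\xi_j$, conditional centering follows from unbiasedness of the gradient oracle given $x_j$. For $\bar\Psi_j$, it follows from the Stein identity~\eqref{eq:app:stein-momentum}. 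Each of these two terms will be handled by the same three-step chain:
\begin{enumerate}
  \item Jensen in the form $\mathbb{E}\|X\|_2 \leq (\mathbb{E}\|X\|_2^r)^{1/r}$, with $r=p$ or $r=q$.
  \item von Bahr--Esseen (Lemma~\ref{lem:app:vbe}) with $C_r\leq 2$.
  \item The conditional moment bounds, followed by the geometric sums of Lemma~\ref{lem:app:geom}.
\end{enumerate}

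\textbf{Gradient term.} With batch size $B$, $\bar\xi_j$ is an average of $B$ i.i.d.\ centered terms. Applying von Bahr--Esseen inside the batch gives
\[
  \mathbb{E}[\|\bar\xi_j\|_2^p\mid\mathcal{F}_{j-1}] \leq 2B^{1-p}\bigl(\sigma_g^p+\alpha_g^p\|\nabla f(x_j)\|_*^p\bigr),
\]
where $\|\nabla f(x_j)\|_*$ is treated as measurable at the conditioning level of the oracle noise. To avoid summing $p$-th powers of the affine part, we do not apply Jensen to the whole expression. Instead we bound
\[
  \mathbb{E}\|M_{\mathrm{grad}}\|_2 \leq \mathbb{E}\Bigl[\bigl(\textstyle\sum_j c_j^p\|\bar\xi_j\|_2^p\bigr)^{1/p}\Bigr]
\]
up to the constant $2^{1/p}$, with $c_j=\beta(1-\beta)^{t+1-j}$. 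This step uses the conditional form of the martingale inequality (Burkholder type, square-function form, valid for $p\le 2$). We then apply the peeling Lemma~\ref{lem:app:peeling} with $Y_j=\|\nabla f(x_j)\|_*$. The $\sigma_g$ part becomes
\[
  \sigma_g\beta\Bigl(\textstyle\sum_k(1-\beta)^{pk}\Bigr)^{1/p}\leq 2\sigma_g\beta^{1-1/p}p^{-1/p}
\]
by Lemma~\ref{lem:app:geom}. The $\alpha_g$ part becomes $\alpha_g\sum_j c_j\mathbb{E}\|\nabla f(x_j)\|_*$. We then loosen $\beta\le 1$ in the $\alpha_g$ coefficient to match the stated form, and the factors $B^{-(p-1)/p}$ come from the batch bound.

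\textbf{Correction term.} For $M_{\mathrm{corr}}$ the argument is identical, with weights $c_j=(1-\beta)^{t+2-j}\le(1-\beta)^{t+1-j}$ and exponent $q$. The input is Lemma~\ref{lem:app:hess-noise}:
\[
  \mathbb{E}[\|\Psi_j\|_2^q\mid\mathcal{F}_{j-1}]\leq C_\delta^q\eta^q\bigl(\bar\sigma_h^q+\bar\alpha_h^q\|\nabla f(x_{j-1})\|_*^q\bigr),
\]
which gains a factor $B^{1-q}$ when the batch variance is averaged. Here the affine factor is anchored at $x_{j-1}$, which is $\mathcal{F}_{j-1}$-measurable; this is exactly what the peeling lemma needs, as Remark~\ref{rem:app:anchor} points out. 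Peeling then gives the $\sigma$ part
\[
  C_\delta\eta\bar\sigma_h\Bigl(\textstyle\sum_k(1-\beta)^{qk}\Bigr)^{1/q}\le 2C_\delta\eta\bar\sigma_h(q\beta)^{-1/q},
\]
and the affine part $C_\delta\eta\bar\alpha_h\sum_j(1-\beta)^{t+1-j}\mathbb{E}\|\nabla f(x_{j-1})\|_*$, which together make up the last two terms of~\eqref{eq:app:error-bound}.

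\textbf{Main obstacle.} The delicate step is passing from $\mathbb{E}\|\sum_j c_jX_j\|_2$ to the peeled square-function form. Plain von Bahr--Esseen followed by Jensen would put the affine terms inside a single $(\cdot)^{1/p}$ over the whole sum. To keep them as a linear sum of $\mathbb{E}\|\nabla f\|_*$, we would use a square-function inequality $\mathbb{E}\|\sum X_j\|\lesssim\mathbb{E}(\sum\|X_j\|^p)^{1/p}$ for $p\le 2$ in Hilbert space, which is the Davis/Burkholder inequality combined with $\ell_p\supseteq\ell_2$. The price is a changed numerical constant, which we absorb into the leading $2$. A second point to check is the batch reduction for the Hessian term: the path term $T_2$ does not depend on the batch, so the $B^{(q-1)/q}$ gain applies cleanly only to $T_1$. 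We will therefore interpret the stated $B$-dependence as applying to $\bar\sigma_h$ in the effective sense, or else verify that $T_2$'s contribution can be accounted for consistently.
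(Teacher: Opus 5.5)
Your route differs from the paper's at the decisive step, and yours is the one that works. The paper applies Jensen and von Bahr--Esseen, then writes $\mathbb{E}\|M_{\mathrm{grad}}\|_2\le 2^{1/p}\bigl(\sum_j c_j^pB^{-(p-1)}(\sigma_g^p+\alpha_g^p\|\nabla f(x_j)\|_*^p)\bigr)^{1/p}$ with a \emph{random} right-hand side, and then invokes the peeling lemma. Von Bahr--Esseen only gives this with the expectation inside the $1/p$-th power. The affine part is then $\alpha_g(\sum_j c_j^p\mathbb{E}\|\nabla f(x_j)\|_*^p)^{1/p}$, which Jensen does not let you bound by $\alpha_g\sum_j c_j\mathbb{E}\|\nabla f(x_j)\|_*$. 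The peeling lemma, which controls $\mathbb{E}[(\sum_j c_j^p\|\xi_j\|_2^p)^{1/p}]$, does not apply to it.

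Your Davis step produces exactly the quantity the peeling lemma bounds:
\[
\mathbb{E}\Bigl\|\textstyle\sum_j c_jX_j\Bigr\|_2\le C_D\,\mathbb{E}\bigl(\textstyle\sum_j c_j^2\|X_j\|_2^2\bigr)^{1/2}\le C_D\,\mathbb{E}\bigl(\textstyle\sum_j c_j^p\|X_j\|_2^p\bigr)^{1/p}
\]
for $p\le 2$ (valid for Hilbert-space martingales). So your argument is valid when $\alpha_g>0$ or $\bar\alpha_h>0$, where the paper's is not. When both vanish, the paper's Jensen/von Bahr--Esseen chain is fine and keeps explicit constants.

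The cost is the unspecified absolute constant $C_D$. You cannot ``absorb it into the leading $2$'': you prove the bound with an absolute constant in place of $2$. This is harmless downstream, and the paper does not justify $2$ either: $2^{2/p}\le 2$ fails for $p<2$, and the factor $2$ from within-batch von Bahr--Esseen is dropped. Discard your opening ``three-step chain'', since it is the failing route. Also note that for $p\le2$ the inclusion is $\ell_p\subseteq\ell_2$.

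Your measurability remark is correct and should be made explicit. The paper's claim that $x_j$ is $\mathcal{F}_{j-1}$-measurable contradicts its own filtration, because $s_{j-1}$ is drawn after $\mathcal{F}_{j-1}$. For $M_{\mathrm{grad}}$, use $\mathcal{H}_{j-1}=\sigma(\mathcal{F}_{j-1},s_{j-1})$: $\bar\xi_j$ stays centered and $\|\nabla f(x_j)\|_*$ is predictable. $M_{\mathrm{corr}}$ must keep $\mathcal{F}_{j-1}$, because the Stein centering averages over $s_{j-1}$; its anchor is $\|\nabla f(x_{j-1})\|_*$.

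Your second concern is a genuine defect of the statement, not something to interpret away. $T_2$ does not depend on the batch. Hence only the $T_1$ contributions can carry $B^{-(q-1)/q}$: these are $\sigma_h$ in $\bar\sigma_h$ and $\alpha_h$ in $\bar\alpha_h$. The $T_2$ contributions cannot: these are $L_0$ in $\bar\sigma_h$ and $L_1$ in $\bar\alpha_h$. The paper's proof simply asserts batch contraction for all of $\bar\Psi_j$.

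Here is a concrete counterexample. Take exact oracles ($f_\xi\equiv f$, so $\sigma_g=\sigma_h=\alpha_g=\alpha_h=0$), $f(x)=\tfrac{L_0}{2}\|x\|_2^2$, $L_1=0$, $\kappa=1$, the Euclidean LMO, and $x_0\neq0$. Then $e_0=0$, $h_1=L_0d_0$, and $e_1=(1-\beta)L_0(\eta-s_0)d_0$. Hence $\mathbb{E}\|e_1\|_2=(1-\beta)L_0\rho\cdot 2\eta\exp(-1)$ for every $B$. The right-hand side at $t=0$ reduces to $2\eta\beta^{-1/q}C_\delta\bar\sigma_h/(q^{1/q}B^{(q-1)/q})$ with $\bar\sigma_h=\rho L_0\widetilde C_{A,q}^{1/q}$, which tends to $0$ as $B\to\infty$. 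So the lemma is false for large $B$.

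What your argument actually proves is the corrected bound. In it, $\bar\sigma_h/B^{(q-1)/q}$ is replaced by an absolute-constant multiple of $\kappa\rho\bigl(\sigma_hB^{-(q-1)/q}+L_0\bigr)$, and the $L_1$-part of $\bar\alpha_h$ carries no batch factor. This coincides with the stated bound at $B=1$, the case used in the main-text theorems.
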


\begin{proof}
Take $\|\cdot\|_2$ of~\eqref{eq:app:e-unrolled} and apply the triangle
inequality:
\[
  \mathbb{E}\|e_{t+1}\|_2 \leq (1-\beta)^{t+1}\|e_0\|_2
    + \mathbb{E}\|M_{\mathrm{grad}}\|_2 + \mathbb{E}\|M_{\mathrm{corr}}\|_2.
\]

\smallskip\noindent\textbf{Bounding $\mathbb{E}\|M_{\mathrm{grad}}\|_2$.}
The batched gradient noise $\bar\xi_j = \tfrac1B\sum_{i\in\mathcal{B}_j}
(\nabla f_\xi^{(i)}(x_j) - \nabla f(x_j))$ is conditionally centered at
$\mathcal{F}_{j-1}$ (since $x_j$ is $\mathcal{F}_{j-1}$--measurable and
the oracle noise is centered) and independent across $j$, so the weighted
summands $\beta(1-\beta)^{t+1-j}\bar\xi_j$ form a martingale difference
sequence with respect to $\{\mathcal{F}_j\}_{j \geq 0}$. By Jensen's
inequality $\mathbb{E}\|M_{\mathrm{grad}}\|_2 \leq
(\mathbb{E}\|M_{\mathrm{grad}}\|_2^p)^{1/p}$. Apply the von
Bahr--Esseen inequality (Lemma~\ref{lem:app:vbe}) with $r = p$ and
coefficients $c_j = \beta(1-\beta)^{t+1-j}$:
\[
  \mathbb{E}\|M_{\mathrm{grad}}\|_2^p
  \leq 2\sum_{j=1}^{t+1} c_j^p\, \mathbb{E}\|\bar\xi_j\|_2^p.
\]
Batch averaging with i.i.d. samples contracts the $p$-th moment by
$B^{-(p-1)}$:
\[
  \mathbb{E}[\|\bar\xi_j\|_2^p \mid \mathcal{F}_{j-1}]
  \leq \frac{1}{B^{p-1}}\bigl(\sigma_g^p + \alpha_g^p\|\nabla f(x_j)\|_*^p\bigr).
\]
(This is because the centered summands are i.i.d. conditionally on
$\mathcal{F}_{j-1}$, so $\mathbb{E}[\|\frac{1}{B}\sum X_i\|_2^p] \leq
B^{-(p-1)}\mathbb{E}\|X_1\|_2^p$ for $p \in (1, 2]$, again by von
Bahr--Esseen.) So
\[
  \mathbb{E}\|M_{\mathrm{grad}}\|_2
  \leq 2^{1/p}\!\left(\sum_j c_j^p\cdot B^{-(p-1)}
    (\sigma_g^p + \alpha_g^p\|\nabla f(x_j)\|_*^p)\right)^{\!1/p}.
\]
Take $\mathbb{E}[\,\cdot\,]$ of this bound and apply the peeling
lemma~\ref{lem:app:peeling} with $c_j \to c_j / B^{(p-1)/p}$, $\sigma =
\sigma_g$, $\alpha = \alpha_g$, $Y_j = \|\nabla f(x_j)\|_*$:
\[
  \mathbb{E}\|M_{\mathrm{grad}}\|_2
  \leq \frac{2^{1/p}}{B^{(p-1)/p}}\!\left[\sigma_g\Bigl(\sum_j c_j^p\Bigr)^{1/p}
    + \alpha_g\sum_j c_j\, \mathbb{E}\|\nabla f(x_j)\|_*\right].
\]
Now bound the constant sum: by Lemma~\ref{lem:app:geom} with $r = p$,
\[
  \sum_{j=1}^{t+1}\beta^p(1-\beta)^{p(t+1-j)}
  \leq \beta^p\sum_{k=0}^\infty (1-\beta)^{pk}
  \leq \beta^p\cdot \frac{2}{p\beta}
  = \frac{2\beta^{p-1}}{p},
\]
so $(\sum_j c_j^p)^{1/p} \leq (2\beta^{p-1}/p)^{1/p} = 2^{1/p}\beta^{(p-1)/p}
p^{-1/p}$. Absorbing $2^{1/p}\cdot 2^{1/p} = 2^{2/p} \leq 2$ into the
prefactor (tight for $p = 2$), we recover
\[
  \mathbb{E}\|M_{\mathrm{grad}}\|_2
  \leq \frac{2\beta^{1-1/p}}{p^{1/p} B^{(p-1)/p}}\, \sigma_g
    + \frac{2\alpha_g}{B^{(p-1)/p}}\sum_j (1-\beta)^{t+1-j}\mathbb{E}\|\nabla f(x_j)\|_*,
\]
where we also used $c_j = \beta(1-\beta)^{t+1-j} \leq (1-\beta)^{t+1-j}$ for $\beta \leq 1$.

\smallskip\noindent\textbf{Bounding $\mathbb{E}\|M_{\mathrm{corr}}\|_2$.}
The batched centered bias error $\bar\Psi_j$ is conditionally centered
at $\mathcal{F}_{j-1}$ by the Stein identity
\eqref{eq:app:stein-momentum}: $\mathbb{E}[\bar\Psi_j \mid \mathcal{F}_{j-1}] =
\mathbb{E}[w_{j-1}\nabla^2 f(x_j) d_{j-1} - (\nabla f(x_j) - \nabla
f(x_{j-1})) \mid \mathcal{F}_{j-1}] = 0$. So the weighted summands form a
martingale difference sequence.

Apply Lemma~\ref{lem:app:vbe} with $r = q$, coefficients $d_j =
(1-\beta)^{t+2-j}$. By Lemma~\ref{lem:app:hess-noise}
(with anchor $x_{j-1}$) and batch contraction,
\[
  \mathbb{E}[\|\bar\Psi_j\|_2^q \mid \mathcal{F}_{j-1}]
  \leq \frac{C_\delta^q\eta^q}{B^{q-1}}\bigl(\bar\sigma_h^q
    + \bar\alpha_h^q\|\nabla f(x_{j-1})\|_*^q\bigr).
\]
Apply the peeling lemma with $\sigma = \bar\sigma_h$, $\alpha =
\bar\alpha_h$, $Y_j = \|\nabla f(x_{j-1})\|_*$ (which is $\mathcal{F}_{j-1}$--
measurable as required):
\[
  \mathbb{E}\|M_{\mathrm{corr}}\|_2
  \leq \frac{2^{1/q}\,C_\delta\eta}{B^{(q-1)/q}}\!\left[\bar\sigma_h
    \Bigl(\sum_j d_j^q\Bigr)^{1/q}
    + \bar\alpha_h\sum_j d_j\, \mathbb{E}\|\nabla f(x_{j-1})\|_*\right].
\]
By Lemma~\ref{lem:app:geom}, $\sum_j d_j^q \leq
2/(q\beta) \cdot (1-\beta)^q \leq 2/(q\beta)$ (after the single--step
shift $d_j = (1-\beta)(1-\beta)^{t+1-j}$, absorbing $(1-\beta) \leq 1$
into the constant), so $(\sum_j d_j^q)^{1/q} \leq 2^{1/q}(q\beta)^{-1/q}$.
Absorbing the prefactors,
\[
  \mathbb{E}\|M_{\mathrm{corr}}\|_2
  \leq \frac{2\eta\beta^{-1/q} C_\delta}{q^{1/q} B^{(q-1)/q}}\bar\sigma_h
    + \frac{2\eta C_\delta \bar\alpha_h}{B^{(q-1)/q}}\sum_j (1-\beta)^{t+1-j}
      \mathbb{E}\|\nabla f(x_{j-1})\|_*.
\]
Combining the three terms gives~\eqref{eq:app:error-bound}.
\end{proof}

\begin{lemma}[Averaged Error Bound]
\label{lem:app:avg-error}
Let $\eta, \beta$ be constant over $t = 0, \ldots, T-1$, and define
$e_{\mathrm{avg}} = \tfrac1T\sum_{t=0}^{T-1}\mathbb{E}\|e_t\|_2$,
$G_{\mathrm{avg}} = \tfrac1T\sum_{t=0}^{T-1}\mathbb{E}\|\nabla f(x_t)\|_*$.
Then
\begin{align}
  e_{\mathrm{avg}}
  &\leq \frac{\|e_0\|_2}{\beta T}
    + \frac{2\beta^{1-1/p}\sigma_g}{p^{1/p} B^{(p-1)/p}}
    + \frac{2\alpha_g\, G_{\mathrm{avg}}}{B^{(p-1)/p}}
    + \frac{2\eta\beta^{-1/q} C_\delta\,\bar\sigma_h}{q^{1/q} B^{(q-1)/q}}
    + \frac{2\eta\beta^{-1} C_\delta\bar\alpha_h\, G_{\mathrm{avg}}}{B^{(q-1)/q}}.
  \label{eq:app:avg-error}
\end{align}
\end{lemma}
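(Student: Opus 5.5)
The plan is to average the per-step bound of Lemma~\ref{lem:app:error} over $t=0,\dots,T-1$ and handle each of its five terms separately. The bound is stated for $e_{t+1}$, $t\ge 0$. The $t=0$ term $\mathbb{E}\|e_0\|_2=\|e_0\|_2$ has the same form as $(1-\beta)^{t}\|e_0\|_2$ with $t=0$, so for every $t\in\{0,\dots,T-1\}$ I will write
\[
\mathbb{E}\|e_t\|_2 \le (1-\beta)^t\|e_0\|_2 + (\text{the four remaining terms with } t+1 \text{ replaced by } t).
\]
For $t=0$ the sums are empty, so this is trivially true.

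\textbf{Constant terms.} The $\sigma_g$- and $\bar\sigma_h$-terms do not depend on $t$, so averaging leaves them unchanged. This yields the second and fourth terms of \eqref{eq:app:avg-error} directly.

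\textbf{Initialization term.} I will sum the geometric series:
\[
\frac1T\sum_{t=0}^{T-1}(1-\beta)^t\|e_0\|_2 \le \frac{\|e_0\|_2}{\beta T}.
\]

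\textbf{Convolution terms.} These carry the gradient norms, and here the summation order is exchanged. For the $\alpha_g$-term,
\[
\frac1T\sum_{t}\sum_{j=1}^{t}(1-\beta)^{t-j}\,\mathbb{E}\|\nabla f(x_j)\|_* = \frac1T\sum_{j}\mathbb{E}\|\nabla f(x_j)\|_*\sum_{t\ge j}(1-\beta)^{t-j}.
\]
Bounding the inner sum by $1/\beta$ gives $\beta^{-1}G_{\mathrm{avg}}$, since the indices $j$ stay within $\{0,\dots,T-1\}$. This produces exactly the stated $\bar\alpha_h$-term with its factor $\eta\beta^{-1}$; the Hessian sum is indexed by $x_{j-1}$, and its indices also lie in range.

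\textbf{Main obstacle.} The $\alpha_g$-term is stated \emph{without} a $\beta^{-1}$ factor. A plain exchange of summation would give $2\alpha_g\beta^{-1}G_{\mathrm{avg}}/B^{(p-1)/p}$, so the crude bound $c_j\le(1-\beta)^{t+1-j}$ used in Lemma~\ref{lem:app:error} is too lossy. The fix is to return to the proof of Lemma~\ref{lem:app:error} and keep the factor $\beta$ in $c_j=\beta(1-\beta)^{t+1-j}$ when applying the peeling Lemma~\ref{lem:app:peeling}. That lemma outputs $\alpha_g\sum_j c_j\,\mathbb{E}[Y_j]$, so the $\beta$ survives. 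After exchanging sums, $\beta\cdot\beta^{-1}=1$, which gives $2\alpha_g G_{\mathrm{avg}}/B^{(p-1)/p}$ as claimed.

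By contrast, the Hessian coefficients $d_j=(1-\beta)^{t+2-j}$ carry no $\beta$, so the $\beta^{-1}$ legitimately remains in the last term. Summing the five contributions yields \eqref{eq:app:avg-error}.
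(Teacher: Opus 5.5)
Your proposal is correct and follows the paper's skeleton: average the per-step bound of Lemma~\ref{lem:app:error}, leave the $t$-independent $\sigma_g$- and $\bar\sigma_h$-terms unchanged, sum the geometric initialization series, and exchange the order of summation in the two convolution terms. You differ from the paper at the $\alpha_g$-term, and there your route is the one that actually proves the stated bound.

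The paper averages Lemma~\ref{lem:app:error} \emph{as stated}, with the coefficient already weakened to $(1-\beta)^{t+1-j}$. After Fubini it finds that the double sum is at most $G_{\mathrm{avg}}/\beta$, and then says that multiplying by $2\alpha_g/B^{(p-1)/p}$ ``gives the third term''. That only yields $2\alpha_g G_{\mathrm{avg}}/(\beta B^{(p-1)/p})$. You instead reopen the proof of Lemma~\ref{lem:app:error} and keep $c_j=\beta(1-\beta)^{t+1-j}$ through the peeling step, so that $\beta\sum_{k\ge 0}(1-\beta)^k\le 1$. This is what establishes the $\beta$-free third term. That form is also what Theorem~\ref{thm:app:conv-e} needs, because it absorbs the self-coupling using the $\beta$-independent batch condition $B\ge(32\kappa\alpha_g/\rho)^{p/(p-1)}$.

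Your re-indexing fixes a second loose end. The paper sums the bound on $\mathbb{E}\|e_{t+1}\|_2$ over $t=0,\dots,T-1$, so it actually averages $e_1,\dots,e_T$. Its Fubini step then produces $\frac1T\sum_{j=1}^{T}\mathbb{E}\|\nabla f(x_j)\|_*$, which contains $x_T$ and is not $G_{\mathrm{avg}}$. Writing the bound for $\mathbb{E}\|e_t\|_2$, $t=0,\dots,T-1$, with $(1-\beta)^t\|e_0\|_2$ keeps every index inside the ranges that define $e_{\mathrm{avg}}$ and $G_{\mathrm{avg}}$.

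One caveat: by reaching into Lemma~\ref{lem:app:error} you also inherit its peeling step. There, von Bahr--Esseen only controls $(\mathbb{E}\sum_j c_j^p\|\bar\xi_j\|_2^p)^{1/p}$, whereas Lemma~\ref{lem:app:peeling} bounds $\mathbb{E}[(\sum_j c_j^p\|\bar\xi_j\|_2^p)^{1/p}]$, so the two do not match as written. A Davis/Burkholder-type inequality $\mathbb{E}\|M_{\mathrm{grad}}\|_2\le C\,\mathbb{E}[(\sum_j c_j^2\|\bar\xi_j\|_2^2)^{1/2}]$ repairs this up to the numerical constant. The factor $\beta$ in $c_j$ passes through either version unchanged, so your argument is no less rigorous than the paper's.
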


\begin{proof}
Sum~\eqref{eq:app:error-bound} from $t = 0$ to $T-1$, divide by $T$.

The initialization term: $\sum_{t=0}^{T-1}(1-\beta)^{t+1}\|e_0\|_2 \leq
\|e_0\|_2\sum_{k=1}^\infty(1-\beta)^k = \|e_0\|_2(1-\beta)/\beta \leq
\|e_0\|_2/\beta$, dividing by $T$ yields $\|e_0\|_2/(\beta T)$.

The constant noise terms are $t$--independent, so summing and dividing
by $T$ leaves them unchanged.

The convolution sums (gradient term): Fubini gives
$\sum_{t=0}^{T-1}\sum_{j=1}^{t+1}(1-\beta)^{t+1-j}\mathbb{E}\|\nabla f(x_j)\|_*
= \sum_{j=1}^{T}\mathbb{E}\|\nabla f(x_j)\|_*
\sum_{t=j-1}^{T-1}(1-\beta)^{t+1-j}$. The inner sum is $\sum_{k=0}^{T-j}
(1-\beta)^k \leq 1/\beta$. Divided by $T$, the double sum is at most
$G_{\mathrm{avg}}/\beta\cdot T/T = G_{\mathrm{avg}}/\beta$. Multiplying
by the coefficient $2\alpha_g/B^{(p-1)/p}$ gives the third term.

The Hessian affine sum is identical with the index shift $x_j \to
x_{j-1}$: after the reindexing $k = j-1$, we get $\sum_{k=0}^{T-1}
\mathbb{E}\|\nabla f(x_k)\|_*\cdot (1/\beta)$, which divided by $T$ is
$G_{\mathrm{avg}}/\beta$. Multiplying by $2\eta C_\delta\bar\alpha_h/
B^{(q-1)/q}$ gives the fifth term.
\end{proof}

\begin{remark}[Constrained analogue]
\label{rem:app:avg-error-b}
For RanSOM--B, Lemma~\ref{lem:app:error} and
Lemma~\ref{lem:app:avg-error} hold with the replacements
$\bar\sigma_h \to \kappa D(\tilde\sigma_h + L)$, $C_\delta \to C_{\delta,B}$,
and $\bar\alpha_h = 0$ (since the affine growth is absorbed into effective
constants via~\eqref{eq:app:eff-const}, $G_\mathcal{C} < \infty$). The
derivation is identical, using Lemma~\ref{lem:app:hess-noise-b} in place
of Lemma~\ref{lem:app:hess-noise}.
\end{remark}


\subsection{Convergence of RanSOM--E}
\label{app:conv-e}

\begin{theorem}[Convergence of RanSOM--E, general]
\label{thm:app:conv-e}
Under Assumptions~\ref{ass:app:wd}--\ref{ass:app:noise} with stepsize
condition~\eqref{eq:app:stepsize-mgf}. Define problem constants
\begin{gather*}
  C_{\mathrm{init\text{-}gap}} = 4\Delta_0/\rho, \qquad
  C_{\mathrm{init\text{-}mom}} = 4\kappa\|e_0\|_2/\rho, \qquad
  C_{\mathrm{smooth}} = 2\rho L_0, \\
  c_1 = \frac{16\kappa\sigma_g}{p^{1/p} B^A}, \qquad
  c_2 = \frac{16\kappa\, C_\delta\bar\sigma_h}{q^{1/q} B^K}, \qquad
  C_{\mathrm{cond}} = \frac{B^{(q-1)/q}}{32\kappa\, C_\delta\bar\alpha_h},
\end{gather*}
where $A = (p-1)/p$, $K = 1/q$. Assume
\begin{equation}
  B \geq \max\!\left\{1, (32\kappa\alpha_g/\rho)^{p/(p-1)}\right\},
  \quad
  \bar\eta = \min\!\left(\tfrac{1}{4\rho L_1},\, \tfrac{1}{\rho L_0}\right),
  \quad
  \eta \leq \min(\bar\eta, C_{\mathrm{cond}}\beta),
  \label{eq:app:stepsize-constraints}
\end{equation}
and that $B_{\mathrm{init}}$ is chosen so $C_{\mathrm{init\text{-}mom}} \leq
C_{\mathrm{init\text{-}gap}}\beta$. Then, with optimal $\beta, \eta$
(given in the proof),
\begin{align}
  \frac{1}{T}\!\sum_{t=0}^{T-1}\mathbb{E}\|\nabla f(x_t)\|_*
  &\leq 2C_{\mathrm{init\text{-}gap}}\!\left(\tfrac{C_{\mathrm{init\text{-}gap}}}{2C_{\mathrm{mom}}}\right)^{-\frac{A+K}{2A+K}}\!
      T^{-\frac{q(p-1)}{2q(p-1)+p}}
  \notag\\
  &\quad + 2\bigl(C_{\mathrm{init\text{-}gap}} c_1^{1/A} C_{\mathrm{cond}}^{-1}\bigr)^{\frac{A}{1+A}} T^{-\frac{p-1}{2p-1}}
  \notag\\
  &\quad + 2\sqrt{C_{\mathrm{init\text{-}gap}} C_{\mathrm{smooth}}}\; T^{-1/2}
    + \frac{C_{\mathrm{init\text{-}gap}}}{\bar\eta}\, T^{-1},
  \label{eq:app:conv-e}
\end{align}
where $C_{\mathrm{mom}} = (c_1^K c_2^A)^{1/(A+K)}$.
\end{theorem}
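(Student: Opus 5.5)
We combine the descent inequality of Lemma~\ref{lem:app:descent} with the averaged error bound of Lemma~\ref{lem:app:avg-error}, absorb the affine gradient terms into the left-hand side, and then optimize over $(\eta,\beta)$.

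\textbf{Step 1: telescope the descent inequality.} With constant $\eta\le\bar\eta$, we take total expectation in~\eqref{eq:app:descent} and sum over $t=0,\dots,T-1$. Since $f\ge f_*$, this gives
\[
  \tfrac{\rho\eta}{2}\,G_{\mathrm{avg}}
  \le \tfrac{\Delta_0}{T} + 2\rho\kappa\eta\, e_{\mathrm{avg}} + \rho^2L_0\eta^2 .
\]
Dividing by $\rho\eta/4$ yields
\[
  2G_{\mathrm{avg}} \le \tfrac{C_{\mathrm{init\text{-}gap}}}{\eta T} + 8\kappa\, e_{\mathrm{avg}} + C_{\mathrm{smooth}}\,\eta .
\]

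\textbf{Step 2: substitute the error bound.} We insert~\eqref{eq:app:avg-error} for $e_{\mathrm{avg}}$; this produces five terms.
\begin{itemize}
  \item The initialization term becomes $C_{\mathrm{init\text{-}mom}}/(\beta T)$. Under the hypothesis $C_{\mathrm{init\text{-}mom}}\le C_{\mathrm{init\text{-}gap}}\beta$, it is at most $C_{\mathrm{init\text{-}gap}}/T$.
  \item The gradient-noise term becomes $\tfrac12 c_1\beta^{A}$.
  \item The Hessian term becomes $\tfrac12 c_2\,\eta\beta^{-K}$.
  \item The two affine terms are $\tfrac{16\kappa\alpha_g}{B^{A}}\,G_{\mathrm{avg}}$ and $\tfrac{16\kappa C_\delta\bar\alpha_h\,\eta}{\beta B^{(q-1)/q}}\,G_{\mathrm{avg}}$. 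The batch condition makes the first at most $\tfrac12 G_{\mathrm{avg}}$, and $\eta\le C_{\mathrm{cond}}\beta$ makes the second at most $\tfrac12 G_{\mathrm{avg}}$. Both are moved to the left, leaving a coefficient of $1$ on $G_{\mathrm{avg}}$.
\end{itemize}
The result is the master inequality
\[
  G_{\mathrm{avg}} \lesssim \tfrac{C_{\mathrm{init\text{-}gap}}}{\eta T} + \tfrac{C_{\mathrm{init\text{-}gap}}}{T} + c_1\beta^{A} + c_2\eta\beta^{-K} + C_{\mathrm{smooth}}\,\eta .
\]
This must hold for all $\eta\le\min(\bar\eta, C_{\mathrm{cond}}\beta)$.

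\textbf{Step 3: optimize.} First fix $\eta$ and choose $\beta$ to balance $c_1\beta^A$ against $c_2\eta\beta^{-K}$, namely $\beta=(c_2\eta/c_1)^{1/(A+K)}$. This makes the momentum cost $\asymp C_{\mathrm{mom}}\,\eta^{A/(A+K)}$ with $C_{\mathrm{mom}}=(c_1^Kc_2^A)^{1/(A+K)}$. Then balance $C_{\mathrm{init\text{-}gap}}/(\eta T)$ separately against each of $C_{\mathrm{mom}}\eta^{A/(A+K)}$ and $C_{\mathrm{smooth}}\eta$, and take the minimum of the resulting step sizes together with $\bar\eta$.
\begin{itemize}
  \item Balancing against the momentum cost gives $\eta\asymp (C_{\mathrm{init\text{-}gap}}/C_{\mathrm{mom}}T)^{(A+K)/(2A+K)}$. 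The resulting rate is $T^{-A/(2A+K)}=T^{-q(p-1)/(2q(p-1)+p)}$, with the stated prefactor.
  \item Balancing against $C_{\mathrm{smooth}}\eta$ gives the $2\sqrt{C_{\mathrm{init\text{-}gap}}C_{\mathrm{smooth}}}\,T^{-1/2}$ term.
  \item When $\bar\eta$ is the active cap, the first term becomes $C_{\mathrm{init\text{-}gap}}/(\bar\eta T)$.
\end{itemize}
The standard device is to choose $\eta$ as the minimum of all candidates. Each term is then bounded by the sum of the balanced values, because $1/\eta$ is at most the sum of the reciprocals of the candidates.

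\textbf{Main obstacle: the coupling constraint $\eta\le C_{\mathrm{cond}}\beta$.} With the balanced $\beta\propto\eta^{1/(A+K)}$, this constraint can bind. When it does, we set $\beta=\eta/C_{\mathrm{cond}}$ and accept a momentum cost $c_1(\eta/C_{\mathrm{cond}})^A$. We then balance it against $C_{\mathrm{init\text{-}gap}}/(\eta T)$, which gives $\eta\asymp (C_{\mathrm{init\text{-}gap}}C_{\mathrm{cond}}^A/(c_1 T))^{1/(1+A)}$. This produces the extra $2(C_{\mathrm{init\text{-}gap}}c_1^{1/A}C_{\mathrm{cond}}^{-1})^{A/(1+A)}T^{-(p-1)/(2p-1)}$ term. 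The Hessian cost $c_2\eta\beta^{-K}$ in this regime is $\le c_2 C_{\mathrm{cond}}^{K}\eta^{1-K}$, which should be dominated by the other terms.

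So the case split is: balanced $\beta$ versus constrained $\beta$, taking $\beta$ as the larger of the two prescriptions. In either case we must also check that $\beta\le 1/2$, which holds for large $T$. Tracking the constants in both regimes and summing the four contributions gives~\eqref{eq:app:conv-e}.
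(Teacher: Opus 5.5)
Your proposal follows the paper's proof essentially step for step: telescope the descent inequality, insert the averaged error bound, absorb the two $G_{\mathrm{avg}}$ self-coupling terms via the batch condition and $\eta\le C_{\mathrm{cond}}\beta$, take $\beta$ as the larger of the balanced value and $\eta/C_{\mathrm{cond}}$, and choose $\eta$ as the minimum of the candidate step sizes; the domination you leave as ``should be'' does hold, since in the boundary case $\beta$ exceeds the point where the two terms are equal, so monotonicity gives $c_2\eta\beta^{-K}\le c_1\beta^{A}$. There are two small caveats: the batch condition only yields $16\kappa\alpha_g/B^{A}\le\rho/2$, so your first absorption needs $\rho\le 1$, an assumption the paper states explicitly and you omit; and the error lemma carries $B^{(q-1)/q}$ rather than $B^{K}=B^{1/q}$, so identifying the Hessian term with a multiple of the stated $c_2\eta\beta^{-K}$ is valid only for $q=2$ or $B=1$ (a slip the paper's proof shares).
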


\begin{proof}
\smallskip\noindent\textbf{Step 1 (Telescope the descent).}
Telescope~\eqref{eq:app:descent} from $t = 0$ to $T-1$ using $f(x_T) \geq
f_*$:
\[
  \sum_{t=0}^{T-1}\tfrac{\rho\eta}{2}\mathbb{E}\|\nabla f(x_t)\|_*
  \leq \Delta_0 + 2\rho\kappa\eta\sum_{t=0}^{T-1}\mathbb{E}\|e_t\|_2
    + T\cdot\tfrac{C_s\rho^2 L_0}{2}\eta^2.
\]
Divide by $T\rho\eta/2$:
\begin{equation}
  G_{\mathrm{avg}} \leq \frac{2\Delta_0}{T\rho\eta} + 4\kappa\, e_{\mathrm{avg}}
    + C_s\rho L_0\eta.
  \label{eq:app:tel}
\end{equation}
With $C_{\mathrm{init\text{-}gap}} = 4\Delta_0/\rho$ (absorbing the
factor $2/\rho$) and $C_{\mathrm{smooth}} = 2\rho L_0$ (absorbing $C_s =
2$), this is $G_{\mathrm{avg}} \leq C_{\mathrm{init\text{-}gap}}/(2T\eta)
+ 4\kappa e_{\mathrm{avg}} + C_{\mathrm{smooth}}\eta/2$. We carry the
factors of $2$ implicitly in the definitions.

\smallskip\noindent\textbf{Step 2 (Substitute the error bound).}
Apply Lemma~\ref{lem:app:avg-error} to bound $4\kappa e_{\mathrm{avg}}$:
\begin{align*}
  4\kappa e_{\mathrm{avg}}
  &\leq \frac{4\kappa\|e_0\|_2}{\beta T}
    + \frac{8\kappa\beta^{1-1/p}\sigma_g}{p^{1/p} B^{(p-1)/p}}
    + \frac{8\kappa\alpha_g\, G_{\mathrm{avg}}}{B^{(p-1)/p}}
  \\
  &\quad + \frac{8\kappa\eta\beta^{-1/q} C_\delta\bar\sigma_h}{q^{1/q} B^{(q-1)/q}}
    + \frac{8\kappa\eta\beta^{-1} C_\delta\bar\alpha_h\, G_{\mathrm{avg}}}{B^{(q-1)/q}}.
\end{align*}
Identify the coefficients: $c_1 = 8\kappa\sigma_g/(p^{1/p}B^{(p-1)/p})$
times $\beta^{1-1/p}$ gives the second term; $c_2 = 8\kappa
C_\delta\bar\sigma_h/(q^{1/q}B^{(q-1)/q})$ times $\eta\beta^{-1/q}$
gives the fourth. (We've doubled $c_1, c_2$ compared to the theorem
statement for bookkeeping; the final answer is unchanged.) So
\eqref{eq:app:tel} becomes
\[
  G_{\mathrm{avg}}\!\left(1 - \frac{8\kappa\alpha_g}{B^{(p-1)/p}}
    - \frac{8\kappa\eta C_\delta\bar\alpha_h}{B^{(q-1)/q}\beta}\right)
  \leq \frac{C_{\mathrm{init\text{-}gap}}}{2 T\eta}
    + \frac{C_{\mathrm{init\text{-}mom}}}{T\eta\beta}
    + c_1\beta^A + c_2\eta\beta^{-K}
    + C_{\mathrm{smooth}}\eta/2.
\]

\smallskip\noindent\textbf{Step 3 (Absorb the self--coupling).}
The two self--coupling coefficients on $G_{\mathrm{avg}}$ in the
parenthesis of Step~2 must be bounded away from $1$ for the inequality
to yield a bound on $G_{\mathrm{avg}}$. We show each is at most $1/4$
under the assumptions of the theorem.

\emph{First coefficient.} The condition $B \geq (32\kappa\alpha_g/\rho)^{p/(p-1)}$
(batch size~\eqref{eq:app:stepsize-constraints}) implies $B^{(p-1)/p}
\geq 32\kappa\alpha_g/\rho$, so $8\kappa\alpha_g/B^{(p-1)/p} \leq
\rho/4 \leq 1/4$ (assuming $\rho \leq 1$, or more generally that we've
already rescaled the LMO radius). The stated bound is $8\kappa\alpha_g/
B^{(p-1)/p} \leq 1/4$.

\emph{Second coefficient.} $C_{\mathrm{cond}} = B^{(q-1)/q}/(32\kappa
C_\delta\bar\alpha_h)$, so $\eta \leq C_{\mathrm{cond}}\beta$ implies
$8\kappa\eta C_\delta\bar\alpha_h/(B^{(q-1)/q}\beta) \leq
8\kappa\cdot C_{\mathrm{cond}}\beta\cdot C_\delta\bar\alpha_h/
(B^{(q-1)/q}\beta) = 8\kappa C_\delta\bar\alpha_h\cdot B^{(q-1)/q}/
(32\kappa C_\delta\bar\alpha_h B^{(q-1)/q}) = 1/4$.

Hence the parenthesis on the left of Step~2 is at least $1/2$, and
\[
  G_{\mathrm{avg}} \leq \frac{C_{\mathrm{init\text{-}gap}}}{T\eta}
    + \frac{2 C_{\mathrm{init\text{-}mom}}}{T\eta\beta}
    + 2c_1\beta^A + 2c_2\eta\beta^{-K}
    + C_{\mathrm{smooth}}\eta.
\]
Under the $B_{\mathrm{init}}$ condition, $C_{\mathrm{init\text{-}mom}}/
(T\eta\beta) \leq C_{\mathrm{init\text{-}gap}}/(T\eta)$, so this term
is absorbed, doubling the first term:
\begin{equation}
  G_{\mathrm{avg}} \leq \frac{2 C_{\mathrm{init\text{-}gap}}}{T\eta}
    + 2c_1\beta^A + 2c_2\eta\beta^{-K} + C_{\mathrm{smooth}}\eta.
  \label{eq:app:master-bound}
\end{equation}
We then absorb the $2$'s into redefined $c_1, c_2$ (this matches the
theorem statement's $c_1, c_2$ with $16\kappa$ in the numerator).

\smallskip\noindent\textbf{Step 4 (Optimize $\beta$).}
Consider $c_1\beta^A + c_2\eta\beta^{-K}$ as a function of $\beta \geq
\eta/C_{\mathrm{cond}}$. Taking the derivative,
\[
  \frac{d}{d\beta}(c_1\beta^A + c_2\eta\beta^{-K})
  = Ac_1\beta^{A-1} - Kc_2\eta\beta^{-K-1}.
\]
Setting to zero: $Ac_1\beta^{A+K} = Kc_2\eta$, so the interior optimum is
$\beta_\star(\eta) = (Kc_2/(Ac_1))^{1/(A+K)}\eta^{1/(A+K)}$. At this
$\beta_\star$, the two terms are equal up to the ratio $A/K$:
\[
  c_1\beta_\star^A = (c_1^K c_2^A)^{1/(A+K)}\cdot (K/A)^{A/(A+K)}\eta^{A/(A+K)},
  \qquad
  c_2\eta\beta_\star^{-K} = (c_1^K c_2^A)^{1/(A+K)}\cdot (A/K)^{K/(A+K)}\eta^{A/(A+K)}.
\]
Their sum is
\[
  (c_1^K c_2^A)^{1/(A+K)}\cdot\bigl[(K/A)^{A/(A+K)} + (A/K)^{K/(A+K)}\bigr]
    \eta^{A/(A+K)}
  \leq 2\cdot C_{\mathrm{mom}}\, \eta^{A/(A+K)},
\]
where we used $(K/A)^{A/(A+K)} + (A/K)^{K/(A+K)} \leq 2$ (both terms are
at most $\max(A, K)/\min(A, K)$, whose geometric mean is $1$; a standard
AM--GM argument).

If $\beta_\star < \eta/C_{\mathrm{cond}}$, the constraint binds and
$\beta = \eta/C_{\mathrm{cond}}$. Then
\[
  c_1\beta^A = c_1(\eta/C_{\mathrm{cond}})^A = c_1 C_{\mathrm{cond}}^{-A}\eta^A,
  \qquad
  c_2\eta\beta^{-K} = c_2 C_{\mathrm{cond}}^K\eta^{1-K/A\cdot(A+K)}\cdot\ldots
\]
More simply: at boundary, the sum is dominated by $c_1 C_{\mathrm{cond}}^{-A}\eta^A$
(since $c_2\eta\beta^{-K}$ at $\beta = \eta/C_{\mathrm{cond}}$ equals
$c_2 C_{\mathrm{cond}}^K\eta^{1-K}$ which is lower order when the constraint
binds).

Combining, the effective momentum error is
\[
  E_{\mathrm{mom}}(\eta)
  \leq 2C_{\mathrm{mom}}\eta^{A/(A+K)} + c_1 C_{\mathrm{cond}}^{-A}\eta^A.
\]

\smallskip\noindent\textbf{Step 5 (Optimize $\eta$).}
Balance the drift $\tfrac{2C_{\mathrm{init\text{-}gap}}}{T\eta}$ against
each of the three candidate error sources:
\begin{enumerate}[topsep=2pt,itemsep=0pt]
  \item[(a)] Main: $\tfrac{2C_{\mathrm{init\text{-}gap}}}{T\eta} = 2C_{\mathrm{mom}}\eta^{A/(A+K)}$
    $\Rightarrow \eta_a = (C_{\mathrm{init\text{-}gap}}/(C_{\mathrm{mom}}T))^{(A+K)/(2A+K)}$.
  \item[(b)] Constraint: $\tfrac{2C_{\mathrm{init\text{-}gap}}}{T\eta} = c_1 C_{\mathrm{cond}}^{-A}\eta^A$
    $\Rightarrow \eta_b = (2C_{\mathrm{init\text{-}gap}} C_{\mathrm{cond}}^A/(c_1 T))^{1/(1+A)}$.
  \item[(c)] Smoothness: $\tfrac{2C_{\mathrm{init\text{-}gap}}}{T\eta} = C_{\mathrm{smooth}}\eta$
    $\Rightarrow \eta_c = (2C_{\mathrm{init\text{-}gap}}/(C_{\mathrm{smooth}} T))^{1/2}$.
\end{enumerate}
Additionally, $\eta \leq \bar\eta$. Set $\eta = \min(\eta_a, \eta_b, \eta_c, \bar\eta)$.
Substituting back into the drift,
\[
  \frac{2C_{\mathrm{init\text{-}gap}}}{T\min(\eta_a, \eta_b, \eta_c, \bar\eta)}
  \leq \sum_i \frac{2C_{\mathrm{init\text{-}gap}}}{T\eta_i}.
\]
Evaluating each:
\begin{align*}
  \frac{2C_{\mathrm{init\text{-}gap}}}{T\eta_a}
  &= 2C_{\mathrm{init\text{-}gap}}\Bigl(\tfrac{C_{\mathrm{init\text{-}gap}}}{2 C_{\mathrm{mom}}}\Bigr)^{-(A+K)/(2A+K)}
     T^{-A/(2A+K)},
  \\
  \frac{2C_{\mathrm{init\text{-}gap}}}{T\eta_b}
  &= 2\bigl(C_{\mathrm{init\text{-}gap}} c_1^{1/A} C_{\mathrm{cond}}^{-1}\bigr)^{A/(1+A)} T^{-A/(1+A)},
  \\
  \frac{2C_{\mathrm{init\text{-}gap}}}{T\eta_c}
  &= 2\sqrt{C_{\mathrm{init\text{-}gap}} C_{\mathrm{smooth}}}\, T^{-1/2},
  \\
  \frac{2C_{\mathrm{init\text{-}gap}}}{T\bar\eta}
  &= \frac{C_{\mathrm{init\text{-}gap}}}{\bar\eta}\, T^{-1}\quad (\text{absorbing factor 2}).
\end{align*}
Noting $A/(2A+K) = q(p-1)/(2q(p-1)+p)$ and $A/(1+A) = (p-1)/(2p-1)$
(substituting $A = (p-1)/p$, $K = 1/q$), these four terms
give~\eqref{eq:app:conv-e}.
\end{proof}

\begin{corollary}[Bounded--variance rate, matching main text]
\label{cor:app:conv-e-simple}
Under the simplified setting $\alpha_g = \alpha_h = 0$ and $L_1 = 0$,
the constraint rate disappears ($C_{\mathrm{cond}} \to \infty$), and
the bound simplifies to
\begin{equation}
  \frac1T\sum_{t=0}^{T-1}\mathbb{E}\|\nabla f(x_t)\|_*
  \leq C_{\mathrm{main}}\, T^{-q(p-1)/(2q(p-1)+p)}
    + C_{\mathrm{smooth}}\, T^{-1/2}
    + C_{\mathrm{geom}}\, T^{-1},
  \label{eq:app:conv-e-simple}
\end{equation}
with $C_{\mathrm{main}} \propto (\Delta_0/\rho)^{A/(2A+K)}
[(\sigma_g/B^A)^K(\bar\sigma_h/B^K)^A]^{1/(2A+K)}$, $C_{\mathrm{smooth}}
\propto \sqrt{\Delta_0 L_0}$, $C_{\mathrm{geom}} \propto \Delta_0 L_0$,
and $\bar\sigma_h = \rho(\sigma_h + \kappa L_0)$ (from
Lemma~\ref{lem:app:hess-noise} at $L_1 = \alpha_h = 0$). For $p = q = 2$,
the main exponent is $1/3$ and $C_{\mathrm{main}} \propto (\Delta_0
\sigma_g\bar\sigma_h)^{1/3}$.

This recovers Theorem~\ref{thm:expsom_convergence} of the main text.
\end{corollary}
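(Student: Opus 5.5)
The corollary is obtained by specializing Theorem~\ref{thm:app:conv-e} and tracking which terms survive. First I set $\alpha_g=\alpha_h=0$ and $L_1=0$ and check each ingredient.

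\begin{itemize}
\item From \eqref{eq:app:alpha-bar} we get $\bar\alpha_h=0$, so $C_{\mathrm{cond}}=B^{(q-1)/q}/(32\kappa C_\delta\bar\alpha_h)=+\infty$. The constraint $\eta\le C_{\mathrm{cond}}\beta$ becomes vacuous.
\item The batch condition reduces to $B\ge 1$.
\item The step-size cap becomes $\bar\eta=1/(\rho L_0)$, since $1/(4\rho L_1)=\infty$.
\item The MGF condition \eqref{eq:app:stepsize-mgf} holds trivially.
\end{itemize}

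Because the boundary case in Step~4 of that proof never binds, the second term of \eqref{eq:app:conv-e} drops out. Its coefficient is $(C_{\mathrm{init\text{-}gap}}c_1^{1/A}C_{\mathrm{cond}}^{-1})^{A/(1+A)}$, which tends to $0$ as $C_{\mathrm{cond}}\to\infty$. Equivalently, one can rerun Steps~4--5 with only the interior optimum $\beta_\star(\eta)$ and the candidates $\eta_a,\eta_c,\bar\eta$. I would state it this latter way to avoid a limiting argument. I also need $\beta_\star\le 1/2$, which holds for large $T$ since $\beta_\star\propto\eta^{1/(A+K)}\to0$.

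Next I read off the constants from the three remaining terms.

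\begin{itemize}
\item \textbf{Main term.} It equals $2C_{\mathrm{init\text{-}gap}}^{1-\frac{A+K}{2A+K}}(2C_{\mathrm{mom}})^{\frac{A+K}{2A+K}}T^{-A/(2A+K)}$. This is proportional to $C_{\mathrm{init\text{-}gap}}^{A/(2A+K)}C_{\mathrm{mom}}^{(A+K)/(2A+K)}$. Since $C_{\mathrm{mom}}^{A+K}=c_1^Kc_2^A$, this gives $(\Delta_0/\rho)^{A/(2A+K)}[(\sigma_g/B^A)^K(\bar\sigma_h/B^K)^A]^{1/(2A+K)}$, up to $\kappa$, $C_\delta$ and $p,q$-dependent numerical factors.
\item \textbf{Smoothness term.} It equals $2\sqrt{C_{\mathrm{init\text{-}gap}}C_{\mathrm{smooth}}}=2\sqrt{8\Delta_0L_0}\propto\sqrt{\Delta_0L_0}$.
\item \textbf{Geometric term.} It equals $C_{\mathrm{init\text{-}gap}}/\bar\eta=4\Delta_0L_0$.
\item \textbf{Exponent.} Using $A/(2A+K)=q(p-1)/(2q(p-1)+p)$ recovers the stated exponent.
\end{itemize}

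The identification $\bar\sigma_h\asymp\rho(\sigma_h+\kappa L_0)$ is already recorded at the end of the proof of Lemma~\ref{lem:app:hess-noise}. Under $L_1=0$ we have $\widetilde C_{A,q}=2^{q-1}M_{ws}\le 12$. This gives $\bar\sigma_h\le\kappa\rho(\sigma_h+cL_0)$, which matches $\rho(\sigma_h+\kappa L_0)$ up to numerical constants and a $\kappa$ factor that I absorb into the constant, as the lemma does.

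For $p=q=2$ we get $A=K=1/2$, so the exponent is $(1/2)/(3/2)=1/3$. Then $C_{\mathrm{main}}\propto(\Delta_0/\rho)^{1/3}(\sigma_g\bar\sigma_h)^{1/3}B^{-1/3}$, which is $(\Delta_0\sigma_g\bar\sigma_h)^{1/3}$ up to $\rho$ and $B$ factors. With $B=1$, this is the main-text rate of Theorem~\ref{thm:expsom_convergence}.

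I also need to confirm the parameter choices. Let $\eta\asymp\eta_a\propto T^{-(A+K)/(2A+K)}$. Substituting $A,K$ gives exponent $\frac{q(p-1)+p}{2q(p-1)+p}$. The optimal momentum is $\beta_\star\propto\eta^{1/(A+K)}\asymp T^{-1/(2A+K)}=T^{-pq/(2q(p-1)+p)}$. Both match the theorem. Finally, $B_{\mathrm{init}}$ is taken large enough that $C_{\mathrm{init\text{-}mom}}\le C_{\mathrm{init\text{-}gap}}\beta$, using batch contraction of $\mathbb{E}\|e_0\|_2$.

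The main obstacle is bookkeeping rather than mathematics. The constant and exponent arithmetic of Step~5 must be carried out correctly, and the condition $\beta\le1/2$ must be verified. Beyond that, I must make sure that dropping the $C_{\mathrm{cond}}$ branch is legitimate and not merely a formal limit. I would handle this by restating Step~4 with the constraint absent.
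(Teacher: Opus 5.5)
Your proposal is correct and is the same specialization the paper intends: with $\bar\alpha_h=0$ one has $C_{\mathrm{cond}}=\infty$, so the constraint branch of Step~4 of Theorem~\ref{thm:app:conv-e} never binds; also $\bar\eta=1/(\rho L_0)$, and the surviving terms give $C_{\mathrm{main}}$ as stated, $C_{\mathrm{smooth}}=2\sqrt{8\Delta_0L_0}$ and $C_{\mathrm{geom}}=4\Delta_0L_0$. Your additional checks (the exponents of $\eta$ and $\beta$, and $\beta_\star\le 1/2$ for large $T$) are details the paper leaves implicit, and the stray $\kappa$ on $\sigma_h$ need not even be absorbed: the $T_1$ bound in the proof of Lemma~\ref{lem:app:hess-noise} carries no $\kappa$, so that proof directly yields $\bar\sigma_h\asymp\rho(\sigma_h+\kappa L_0)$.
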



\subsection{Convergence of RanSOM--B}
\label{app:conv-b}

\begin{theorem}[Convergence of RanSOM--B, general]
\label{thm:app:conv-b}
Consider RanSOM--B under
Assumptions~\ref{ass:app:wd}--\ref{ass:app:noise}, optimizing $f$ over a
compact convex set $\mathcal{C}$ of diameter $D$ with $G_\mathcal{C} =
\sup_{\mathcal{C}}\|\nabla f\|_* < \infty$ and effective
constants~\eqref{eq:app:eff-const}. Define
\[
  C_{\mathrm{init\text{-}gap}} = \Delta_0, \qquad
  C_{\mathrm{smooth}} = \tfrac12 C_s L D^2, \qquad
  c_1 = \frac{2D\tilde\sigma_g}{p^{1/p} B^A}, \qquad
  c_2 = \frac{2D\, C_{\delta,B}\,\tilde\sigma_{h,\mathrm{eff}}}{q^{1/q} B^K},
\]
with $\tilde\sigma_{h,\mathrm{eff}} = \kappa D(\tilde\sigma_h + L)$ from
Lemma~\ref{lem:app:hess-noise-b}. With optimal $\beta, \eta \in (0, 1]$
and $B_{\mathrm{init}}$ chosen so $C_{\mathrm{init\text{-}mom}} \leq
C_{\mathrm{init\text{-}gap}}\beta$,
\begin{align}
  \frac1T\sum_{t=0}^{T-1}\mathbb{E}[\mathcal{G}(x_t)]
  &\leq 2C_{\mathrm{init\text{-}gap}}\Bigl(\tfrac{C_{\mathrm{init\text{-}gap}}}{2C_{\mathrm{mom}}}\Bigr)^{-(A+K)/(2A+K)}
      T^{-q(p-1)/(2q(p-1)+p)}
  \notag \\
  &\quad + 2\sqrt{C_{\mathrm{init\text{-}gap}} C_{\mathrm{smooth}}}\, T^{-1/2}
    + \frac{\Delta_0}{T},
  \label{eq:app:conv-b}
\end{align}
with $A = (p-1)/p$, $K = 1/q$, $C_{\mathrm{mom}} = (c_1^K c_2^A)^{1/(A+K)}$.
\end{theorem}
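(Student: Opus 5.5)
The proof follows the template of Theorem~\ref{thm:app:conv-e}, with two simplifications. There is no self-coupling term, because $\bar\alpha_h = 0$ and the affine gradient noise is absorbed into $\tilde\sigma_g$ via~\eqref{eq:app:eff-const}. There is also no Gr\"onwall or moment-generating-function condition, because $s_t \in [0,1]$.

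\emph{Step 1: telescope the descent inequality.} Take total expectations in Lemma~\ref{lem:app:descent-b} with constant $\eta_t = \eta \le 1/2$, sum over $t = 0,\dots,T-1$, and use $f(x_T) \ge f_*$. Dividing by $T\eta$ gives
\[
  \tfrac1T\textstyle\sum_t \mathbb{E}[\mathcal{G}(x_t)] \le \frac{\Delta_0}{T\eta} + 2D\, e_{\mathrm{avg}} + \tfrac{C_s L D^2}{2}\eta .
\]
Here $C_s \le 2$ by the Beta moment computation in Appendix~\ref{app:constants}. Note that $\mathcal{G}$ appears directly, with coefficient $\eta$ rather than $\rho\eta/2$, so no factor $4/\rho$ is needed. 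This is why $C_{\mathrm{init\text{-}gap}} = \Delta_0$ and $C_{\mathrm{smooth}} = \tfrac12 C_s L D^2$.

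\emph{Step 2: bound the averaged momentum error.} Invoke Remark~\ref{rem:app:avg-error-b}, i.e.\ Lemma~\ref{lem:app:avg-error} with $\bar\alpha_h = 0$, $\alpha_g$ replaced by $0$ and $\sigma_g$ by $\tilde\sigma_g$, $C_\delta \to C_{\delta,B}$, and $\bar\sigma_h \to \tilde\sigma_{h,\mathrm{eff}}$. This yields
\[
  2D\,e_{\mathrm{avg}} \le \frac{2D\|e_0\|_2}{\beta T} + c_1\beta^{A} + c_2\,\eta\,\beta^{-K},
\]
with $c_1, c_2$ exactly as defined in the theorem. The martingale structure rests on the Stein identity~\eqref{eq:app:stein-momentum} and Lemma~\ref{lem:app:hess-noise-b}. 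Since the Beta weight $w_t$ is random but bounded, the moment constants $M_w$ and $M_{ws}$ are $O(1)$ for $\eta \le 1/2$. Setting $C_{\mathrm{init\text{-}mom}} = 2D\|e_0\|_2$, the hypothesis $C_{\mathrm{init\text{-}mom}} \le C_{\mathrm{init\text{-}gap}}\beta$ absorbs the initialization term into $\Delta_0/(T\eta)$, at most doubling it. We obtain the master bound
\[
  \mathrm{gap}_{\mathrm{avg}} \le \frac{2\Delta_0}{T\eta} + c_1\beta^A + c_2\eta\beta^{-K} + C_{\mathrm{smooth}}\eta .
\]

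\emph{Step 3: optimize $\beta$, then $\eta$.} This repeats Steps 4--5 of the proof of Theorem~\ref{thm:app:conv-e}, but with no constraint $\beta \ge \eta/C_{\mathrm{cond}}$, so case (b) disappears. Taking $\beta_\star(\eta) \propto \eta^{1/(A+K)}$ bounds the two momentum terms by $2C_{\mathrm{mom}}\eta^{A/(A+K)}$. Then set $\eta = \min(\eta_a, \eta_c, 1/2)$, where $\eta_a$ balances the drift against the main term and $\eta_c$ balances it against the smoothness term. Bounding $1/\min$ by the sum of reciprocals gives the three terms of~\eqref{eq:app:conv-b}; the $\eta \le 1/2$ cap produces the $O(\Delta_0/T)$ geometric term.

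\emph{Main obstacle.} The main care point is checking admissibility of the parameters. We need $\beta_\star \le 1/2$, which is required by Lemma~\ref{lem:app:geom}, and $\eta \le 1/2$, which makes the Beta constants and $K_t>0$ valid. Both hold for $T$ large. For small $T$, I would cap $\beta$ at $1/2$ and check that this only adds terms dominated by those already present. A second point is tracking the exponents: $A/(2A+K) = q(p-1)/(2q(p-1)+p)$, exactly as in the unconstrained case.
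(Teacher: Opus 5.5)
Your argument follows the paper's proof step for step: you telescope Lemma~\ref{lem:app:descent-b}, apply the constrained error bound of Remark~\ref{rem:app:avg-error-b} (no self-coupling), absorb the initialization term, plug in the interior $\beta_\star(\eta)$, and take $\eta$ as a minimum of balancing choices with $1/\min\le\sum_i 1/\eta_i$. Capping $\eta$ at $1/2$ instead of the paper's $1$ is actually the consistent choice. Lemma~\ref{lem:app:hess-noise-b} and the Beta moment bounds require $\eta_t\le 1/2$, and $\eta=1$ gives $K_t=0$. The cap costs only a constant in the $\Delta_0/T$ term. One minor inaccuracy: with Lemma~\ref{lem:app:avg-error} as stated, $2D\,e_{\mathrm{avg}}$ produces $2c_1\beta^A$ and $2c_2\eta\beta^{-K}$, the latter with $B^{-(q-1)/q}$ in place of $B^{-K}$ when $q<2$. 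So your constants are not ``exactly'' the theorem's; the paper absorbs the same mismatch.

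The step that would fail is your patch of capping $\beta$ at $1/2$ when $\beta_\star(\eta)>1/2$. The capped momentum part is $c_1 2^{-A}+2^K c_2\eta$. The first piece is harmless, since it is at most $c_1\beta_\star^A$. The second exceeds the interior value $c_2\eta\beta_\star^{-K}$ by the factor $(2\beta_\star)^K$, which can be arbitrarily large, and it is not dominated by the right-hand side of \eqref{eq:app:conv-b}.

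For example, take $p=q=2$, $\Delta_0=c_2=1$, $c_1=\epsilon$, $C_{\mathrm{smooth}}=\sqrt{\epsilon}$ and $T=3\epsilon^{-1/2}$, so $T\to\infty$ as $\epsilon\to0$ and this is not merely a small-$T$ corner. Then $\eta=\eta_a=3^{-2/3}$ and $\beta_\star\approx 0.48/\epsilon$. The capped bound contains $\sqrt{2}\,c_2\eta\approx 0.68$, while every term of \eqref{eq:app:conv-b} is $O(\sqrt{\epsilon})$. No retuning of $\eta$ under $\beta\le 1/2$ brings the bound below order $\epsilon^{1/4}$.

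The repair is to let $\beta$ range over $(0,1]$. The conclusion of Lemma~\ref{lem:app:geom} holds for every such $\beta$, because $1-(1-\beta)^r\ge\beta$ for $r\ge 1$. Hence $\beta_\star$ is admissible whenever $\beta_\star\le 1$. If $\beta_\star>1$, take $\beta=1$. Every $\bar\Psi_j$ enters $e_{t+1}$ with at least one factor $(1-\beta)=0$, so keep the factor $(1-\beta)^q$ in the bound on $\mathbb{E}\|M_{\mathrm{corr}}\|_2$ instead of discarding it as the paper does. Only the gradient-noise term then remains, at most a constant times $c_1\le c_1\beta_\star^A\le 2C_{\mathrm{mom}}\eta^{A/(A+K)}$, and the rest of your argument goes through unchanged. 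The paper's own proof uses $\beta_\star$ with no admissibility check at all, so you inherited this hole rather than introduced it; you did flag it, but your proposed fix does not close it.
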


\begin{proof}
\smallskip\noindent\textbf{Step 1 (Telescope and substitute).}
Telescope~\eqref{eq:app:descent-b} from $t = 0$ to $T-1$ using $f(x_T)
\geq f_*$:
\[
  \sum_{t=0}^{T-1}\eta\, \mathbb{E}\mathcal{G}(x_t)
  \leq \Delta_0 + 2D\eta\sum_t \mathbb{E}\|e_t\|_2 + T\cdot \tfrac{C_s L D^2}{2}\eta^2.
\]
Divide by $T\eta$, introduce $\mathrm{Gap}_{\mathrm{avg}} = \tfrac1T\sum
\mathbb{E}\mathcal{G}(x_t)$:
\[
  \mathrm{Gap}_{\mathrm{avg}}
  \leq \frac{\Delta_0}{T\eta} + 2D\, e_{\mathrm{avg}}
    + \tfrac{C_s L D^2}{2}\eta.
\]
Apply Lemma~\ref{lem:app:avg-error} (constrained variant per
Remark~\ref{rem:app:avg-error-b}):
\[
  2D\, e_{\mathrm{avg}}
  \leq \frac{2D\|e_0\|_2}{\beta T}
    + \frac{4D\beta^{1-1/p}\tilde\sigma_g}{p^{1/p} B^{(p-1)/p}}
    + \frac{4D\eta\beta^{-1/q} C_{\delta,B}\tilde\sigma_{h,\mathrm{eff}}}{q^{1/q} B^{(q-1)/q}}.
\]
(Note: the affine self--coupling terms vanish because $\bar\alpha_h = 0$
in the constrained case — all affine growth was absorbed into
$\tilde\sigma_g, \tilde\sigma_h$ via $G_\mathcal{C}$.)

Identifying $c_1, c_2$ as in the theorem statement (absorbing the
numerical factor $4$ into them or into $c_1, c_2$), and under the
initialization condition $C_{\mathrm{init\text{-}mom}} \leq
C_{\mathrm{init\text{-}gap}}\beta$,
\begin{equation}
  \mathrm{Gap}_{\mathrm{avg}}
  \leq \frac{2C_{\mathrm{init\text{-}gap}}}{T\eta}
    + c_1\beta^A + c_2\eta\beta^{-K}
    + C_{\mathrm{smooth}}\eta.
  \label{eq:app:b-master}
\end{equation}
No self--coupling in $\mathrm{Gap}_{\mathrm{avg}}$ appears, thanks to the
uniform bound on $\|\nabla f\|_*$.

\smallskip\noindent\textbf{Step 2 (Optimize $\beta$ and $\eta$).}
The optimization is identical to Theorem~\ref{thm:app:conv-e} but
without the $C_{\mathrm{cond}}$ constraint on $\eta$. The interior
optimum of $c_1\beta^A + c_2\eta\beta^{-K}$ gives effective momentum
error $2C_{\mathrm{mom}}\eta^{A/(A+K)}$.

Balance against the drift $\tfrac{2C_{\mathrm{init\text{-}gap}}}{T\eta}$:
\begin{itemize}[topsep=2pt,itemsep=0pt]
  \item Main: $\eta_a = (C_{\mathrm{init\text{-}gap}}/(C_{\mathrm{mom}} T))^{(A+K)/(2A+K)}$;
  \item Smoothness: $\eta_c = \sqrt{2C_{\mathrm{init\text{-}gap}}/(C_{\mathrm{smooth}} T)}$;
  \item Geometric: $\eta \leq 1$ (forced by $s_t \in [0,1]$: since
  $x_{t+1} = (1-s_t)x_t + s_t v_t$ is a convex combination, any $\eta_t
  = 1/(1+K_t) \leq 1$ keeps $x_{t+1} \in \mathcal{C}$).
\end{itemize}
Set $\eta = \min(\eta_a, \eta_c, 1)$. Substituting into the drift,
\[
  \frac{2C_{\mathrm{init\text{-}gap}}}{T\min(\eta_a,\eta_c, 1)}
  \leq \sum_i \frac{2C_{\mathrm{init\text{-}gap}}}{T\eta_i}.
\]
Evaluating gives the three terms in~\eqref{eq:app:conv-b}. The geometric
term $2C_{\mathrm{init\text{-}gap}}/T\cdot 1 = 2\Delta_0/T$ absorbs the
numerical factor into $\Delta_0/T$ for the main--text display.
\end{proof}

\begin{corollary}[Bounded--variance rate, constrained]
\label{cor:app:conv-b-simple}
Under the simplified setting $\alpha_g = \alpha_h = 0$ and $L_1 = 0$
(so $L = L_0$, $\tilde\sigma_g = \sigma_g$, $\tilde\sigma_h = \sigma_h$),
\begin{equation}
  \frac1T\sum_{t=0}^{T-1}\mathbb{E}[\mathcal{G}(x_t)]
  \leq C_{\mathrm{main}}\, T^{-q(p-1)/(2q(p-1)+p)}
    + C_{\mathrm{smooth}}\, T^{-1/2}
    + \frac{\Delta_0}{T},
  \label{eq:app:conv-b-simple}
\end{equation}
where $C_{\mathrm{main}} \propto \Delta_0^{A/(2A+K)}[(D\sigma_g)^K
(D\bar\sigma_h)^A]^{1/(2A+K)}$ with $\bar\sigma_h = 2D(\sigma_h + L_0)$,
and $C_{\mathrm{smooth}} \propto D\sqrt{\Delta_0 L_0}$. For $p = q = 2$,
the main exponent is $1/3$ and $C_{\mathrm{main}} \propto (\Delta_0
\cdot D\sigma_g\cdot D\bar\sigma_h)^{1/3}$.

This recovers Theorem~\ref{thm:betasom_convergence} of the main text.
\end{corollary}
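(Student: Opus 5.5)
The corollary follows by specializing Theorem~\ref{thm:app:conv-b} to $L_1=\alpha_g=\alpha_h=0$ and then simplifying the constants. Under this setting the effective constants of \eqref{eq:app:eff-const} become $L=L_0$, $\tilde\sigma_g=\sigma_g$ and $\tilde\sigma_h=\sigma_h$. Lemma~\ref{lem:app:hess-noise-b} then gives $\tilde\sigma_{h,\mathrm{eff}}=\kappa D(\sigma_h+L_0)$, which matches $\bar\sigma_h$ up to the absolute factor $\kappa$ (or $2$) that we absorb into $C_{\delta,B}$. The normalized moments $M_w$, $M_{ws}$, $C_s$ for the Beta distribution are absolute constants when $\eta\le 1/2$. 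Hence $C_{\delta,B}$ is numerical, and so are the prefactors $p^{-1/p}$ and $q^{-1/q}$.

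The main step is to expand the leading term of \eqref{eq:app:conv-b}. Write $C_{\mathrm{init\text{-}gap}}=\Delta_0$ and $C_{\mathrm{mom}}=(c_1^Kc_2^A)^{1/(A+K)}$, where $c_1\asymp D\sigma_g/B^A$ and $c_2\asymp D\bar\sigma_h/B^K$. The leading term is then
\[
2\Delta_0\Bigl(\tfrac{\Delta_0}{2C_{\mathrm{mom}}}\Bigr)^{-\frac{A+K}{2A+K}}
\asymp \Delta_0^{1-\frac{A+K}{2A+K}}\,C_{\mathrm{mom}}^{\frac{A+K}{2A+K}}
=\Delta_0^{\frac{A}{2A+K}}\,(c_1^Kc_2^A)^{\frac{1}{2A+K}}.
\]
This is exactly $C_{\mathrm{main}}\propto\Delta_0^{A/(2A+K)}[(D\sigma_g)^K(D\bar\sigma_h)^A]^{1/(2A+K)}$ with $B$ fixed. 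The exponent bookkeeping uses $1-\frac{A+K}{2A+K}=\frac{A}{2A+K}$.

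The remaining terms are straightforward. The smoothness term gives $2\sqrt{\Delta_0\cdot\tfrac12 C_sL_0D^2}\asymp D\sqrt{\Delta_0L_0}$, since $C_s\le2$. The geometric term stays $\Delta_0/T$. For $p=q=2$ we have $A=K=1/2$, so the exponent is $\frac{A}{2A+K}=1/3$ and $C_{\mathrm{main}}\propto(\Delta_0\cdot D\sigma_g\cdot D\bar\sigma_h)^{1/3}$. Here the extra factor of $D$ on $\bar\sigma_h$ is the one carried in $c_2$.

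The only delicate point is consistency of the constants rather than any analysis. We must check that the $D$ factors in $c_2$ (one from the descent lemma and one inside $\tilde\sigma_{h,\mathrm{eff}}$) are tracked correctly. We must also check that the choices $\eta=\min(\eta_a,\eta_c,1)$ and $\beta_\star(\eta)$ satisfy $\beta\le 1/2$ and $\eta\le 1/2$, as required by Lemma~\ref{lem:app:geom} and the moment bounds. For large $T$ both $\eta_a$ and $\beta_\star\asymp\eta^{1/(A+K)}$ decay, so this holds eventually. Otherwise, small $T$ is covered by the $\Delta_0/T$ and $T^{-1/2}$ terms, after adjusting constants.
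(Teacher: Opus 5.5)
Your proposal is correct and follows the paper's route: the corollary is just Theorem~\ref{thm:app:conv-b} with $L=L_0$, $\tilde\sigma_g=\sigma_g$, $\tilde\sigma_h=\sigma_h$, where the identity $1-\frac{A+K}{2A+K}=\frac{A}{2A+K}$ turns the leading term into $\Delta_0^{A/(2A+K)}(c_1^Kc_2^A)^{1/(2A+K)}$ with $c_1\asymp D\sigma_g$ and $c_2\asymp D\bar\sigma_h$ (your extra check that $\eta,\beta\le 1/2$ is not needed once that theorem is taken as given). The only part you leave unverified is the closing claim that this recovers Theorem~\ref{thm:betasom_convergence}; your correct tracking of the second factor of $D$ in $c_2$ shows the two displays differ by a factor $D^{A/(2A+K)}$ (that is, $D^{1/3}$ for $p=q=2$), because the main text uses $\bar\sigma_h=D(\sigma_h+L_0)$ without the extra $D$, so the match holds only if $D$ is treated as a constant.
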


\end{document}